\definecolor{lightgray}{gray}{0.9}
\definecolor{darkgreen}{rgb}{0,0.5,0}
\definecolor{darkblue}{rgb}{0,0.1,0.5}
\newtheoremstyle{introTheorems}% name of the style to be used
  {\topsep}% measure of space to leave above the theorem. E.g.: 3pt
  {\topsep}% measure of space to leave below the theorem. E.g.: 3pt
  {\itshape}% name of font to use in the body of the theorem
  {0pt}% measure of space to indent
  {\bfseries}% name of head font
\newtheorem{theorem}{Theorem}[section]
\newtheorem{corollary}[theorem]{Corollary}
\newtheorem{definition}[theorem]{Definition}
\newtheorem{acknowledgment}[theorem]{Acknowledgment}
\newtheorem{example}[theorem]{Example}
\newtheorem{lemma}[theorem]{Lemma}
\newtheorem{problem}[theorem]{Problem}
\newtheorem{question}[theorem]{Question}
\newtheorem{remark}[theorem]{Remark}
\theoremstyle{introTheorems}
\newtheorem{introTheorem}{Theorem}
\newtheorem{introLemma}{Lemma}
\newcommand{\Rep}{{\rm Rep}}
\newcommand{\cC}{\mathcal{C}}
\newcommand{\cM}{\mathcal{M}}
\newcommand{\V}{\mathcal{V}}
\newcommand{\C}{\mathbb{C}}
\newcommand{\Z}{\mathbb{Z}}
\newcommand{\N}{\mathbb{N}}
\newcommand{\g}{\mathfrak{g}}
\renewcommand{\sl}{\mathfrak{sl}}
\newcommand{\SL}{\mathrm{SL}}
\newcommand{\Vir}{\mathrm{Vir}}
\newcommand{\bpsi}{\bar{\psi}}
\renewcommand{\d}{\mathrm{d}}
\renewcommand{\i}{\mathrm{i}}
\newcommand{\bV}{\mathbb{V}} %affine Irrep
\renewcommand{\Re}{\mathfrak{Re}}
\newcommand{\Res}{\mathrm{Res}}
\newcommand{\gr}{\mathrm{gr}}
\newcommand{\SF}{\mathcal{SF}}
\newcommand{\Ind}{\mathrm{Ind}}
\newcommand{\TFT}{\mathcal{Z}}
\newcommand{\connection}{\nabla_z}
\newcommand{\normord}[1]{\vcentcolon\mathrel{#1}\vcentcolon}
\providecommand{\vcentcolon}{\mathrel{\mathop{:}}}
\newcommand{\indic}{1}
\newcommand{\Shift}{\mathrm{Shift}}
\title{Twisted vertex algebra modules for irregular connections: \\ A case study}
\author{Boris L. Feigin, Simon D. Lentner}
\date{November 2024}
\begin{document}

\maketitle

\begin{abstract}
A vertex algebra with an action of a group $G$ comes with a notion of $g$-twisted modules, forming a $G$-crossed braided tensor category. For a Lie group $G$, one might instead wish for a notion of $(\d+A)$-twisted modules for any $\g$-connection on the formal punctured disc. For connections with a regular singularity, this reduces to $g$-twisted modules, where $g$ is the monodromy around the puncture. The case of an irregular singularity is much richer and involved, and we are not aware that it has appeared in vertex algebra language. The present article is intended to spark such a treatment, by providing a list of expectations and an explicit worked-through example with interesting applications.  

\medskip

Concretely, we consider the vertex super algebra of symplectic fermions, or equivalently the triplet vertex algebra $\mathcal{W}_p(\sl_2)$ for $p=2$, and study its twisted module with respect to irregular $\sl_2$-connections. We first determine the category of representations, depending on the formal type of the connection. Then we prove that a Sugawara type construction gives a Virasoro action and we prove that as Virasoro modules our representations are  direct sums of Whittaker modules.

\medskip

Conformal field theory with irregular singularities resp. wild ramification appear  in the context of geometric Langlands correspondence, in particular work by Witten \cite{Wit08}, and more generally in higher-dimensional context. Our original motivation comes from semiclassical limits of the generalized quantum Langlands kernel, which fibres over the space of connections \cite{FL24}, similar to the affine Lie algebra at critical level. Our present article now describes, in the smallest case, the fibres and their representation categories over irregular connections.
\end{abstract}

\newpage

\tableofcontents

%TODO
%Ask about Spinor bundle
%Are the induced modules a Serre category?
%Different conventions for sections and functionals
%In Boalch article: What is generic vs. diagonalizable? Is stokes data just exponential or all? Any source for nilpotent case?
%What is the fibre for critical level, is there a before-reduction version?

\section{Introduction}

\begin{quote}
Why should an algebraic geometer be interested in
irregular connections?\\
(Pierre Deligne)
\end{quote}

\subsection{Twisted modules, the global picture}

Before we come to the concrete results in the present article, we discuss the intuition in the background. We expect that the following view is shared implicitly by the experts, but we have not found a rigorous treatment, to the point we need, in vertex algebra literature. Hence, in this paper we will give ad-hoc definitions and subsequent results on the vertex algebra side, and the interpretation that we now give should be considered conjectural. It would be good to clarify them in future work and we are happy for comments or references.

Let $\V$ be a vertex operator algebra with good properties, for example $C_2$-cofinite. An important global quantity  associated to $\V$ are the \emph{chiral conformal blocks} $\TFT(\Sigma_{g,n})$  associated to any Riemann surface with genus $g$ (including a complex structure) and $n$~punctures $z_1,\ldots,z_n$ \cite{FBZ04}. The axioms require a good dependence on $z_1,\ldots,z_n$ and a good behavior under gluing of surfaces. The vertex operator itself, the action on vertex modules and most generally the intertwiners between triples of vertex modules correspond to elements in the chiral conformal block of the three-punctured sphere $\TFT(\Sigma_{0,3})$.

Assume that a Lie group $G$ acts on $\V$ by vertex algebra automorphisms.  We would expect the notion of a \emph{$(\d+A)$-twisted module} $\cM$ where 
$$\d+A=\d+\sum_{k\in\Z} A_n z^{-n-1},\quad A_n\in \g$$
is a formal $\g$-connection on the punctured formal disc. Of course this should have good transformation properties under formal coordinate transformations and under formal gauge transformations, which are formal power series $F\in G((z))$ and which change the connection to 
$$F[\d+A]=\d-(\d F)F^{-1}+FAF^{-1}$$
If $\cC$ is the category of representations of $\V$, then we denote by $\cC_{\d+A}$ the category of $(\d+A)$-twisted modules. Precomposing with a gauge transformation should send a $(\d+A)$-twisted module to a $F[\d+A]$-twisted module.\footnote{We check this in our example in Lemma \ref{lm_gaugeModules}.}

We moreover expect that for every global $\g$-connection $\connection$ on the sphere with punctures $z_1,z_2,z_3$ there is a bifunctor
\begin{align}\label{formula_bifunctor}
\cC_{\d+A^{(1)}}\boxtimes \cC_{\d+A^{(2)}} \to \cC_{\d+A^{(3)}}
\end{align}
where $\d+A^{(i)}$ is the expansion of $\connection$ around $z_i$. This (and any other global quantity) should have good transformation properties under  coordinate transformations and analytic gauge transformations i.e. the power series above have to be convergent.

\begin{example}
In the case of a connection with regular singularity $\d+\sum_{n\leq 0} A_n z^{-n-1}$, the connection on the formal disc is up to gauge transformations classified by the monodromy $g=e^{2\pi\i A_0}$. We may denote $\cC_{\d+A}=:\cC_g$ up to gauge transformations. Note that by going to these equivalence  classes we disregard possible higher categorical structure.

The space of global connections on the $3$-punctured spheres with simple poles of monodromy $g_1,g_2,g_3$ is, up to gauge transformations, $1$-dimensional iff $g_1g_2g_3=1$ respectively $g_1g_2=g_3$ (for ingoing orientations at $z_1,z_2$ and outgoing orientation at $z_3$) and zero else. The bifunctor \eqref{formula_bifunctor} is expected to turn $\bigoplus_{g\in G}\cC_g$ into a $G$-crossed braided tensor category.  
\end{example}

\begin{problem}
These definitions should be brought into concrete vertex algebra definitions, that reduce in the case of a regular singularity to the familiar notions of $g$-twisted modules \cites{FLM88, Huang10}. 
\begin{itemize}
\item The $(\d+A)$-twisted modules should be defined by  a \emph{$(\d+A)$-twisted Jacobi identity}. Starting from the commutator formula in \cite{Bak16} Section 5.3, to which the reader is referred for details, a guess for a generalization to $(\d+A)$-twisted modules is\footnote{We check this in our example in Lemma \ref{lm_twistedCommutator}.}
\begin{align}\label{formula_twistedJacobi} 
[a_{(m)}^{\d+A},b_{(n)}^{\d+A}]=\sum_{j\geq 0}\sum_{k\in\Z} \big(({m+A_k\choose j} a)_{(j)} b\big)_{(m+n-j-k)}^{\d+A}
\end{align}
where we plug $A_k\in\g$, which acts on fields, into the complex polynomial ${m+X\choose j}$. The regular singular term $A_0$ preserves the degree, while the regular terms contribute with strictly lower degree and the irregular terms contribute with strictly higher degree.  

\item The fixed-point vertex subalgebra $\V^G\subset \V$ should act as for usual vertex modules. In particular, the  $(\d+A)$-twisted modules should be proper modules over the Virasoro algebra.\footnote{We check this in our example in Lemma \ref{lm_Vir}.} 
\item A bifunctor \eqref{formula_bifunctor} depending on the global connection $\connection$ should be defined by a notion of $\connection$-twisted intertwining operators.\footnote{Note that even for regular singularities and non-commuting group elements $g_1,g_2,g_3$ there seems not yet a satisfactory definition of such intertwining operators in terms of Jacobi identities. }
\end{itemize}
Moreover, if the $G$-action is \emph{inner}, that is, it comes from a conformal embedding of the vertex algebra associated to the affine Lie algebra $\bV^\kappa(\g)\hookrightarrow \V$, as in \cite{FBZ04} Section 7.    
%Does the fact I get something for Surfces with G-bundles really need inner? 
Then there should be a $\Delta$-deformation $\cC\cong \cC_{\d+A}$  generalizing the respective construction in  \cites{DLM96,Li97, AM09}.
\end{problem}

\begin{example}\label{exm_affine}
For $\V=\bV^\kappa(\g)$ itself with the inner action, we expect that $\cC\cong \cC_{\d+A}$ by an explicit isomorphism of the twisted and untwisted  mode algebra as in \cite{FL24} Lemma~2.2. However, the Virasoro action is modified, as we have encountered in \cite{FL24} Theorem 5.3
\begin{align*}
L_n^{\d+A}
&=L_n
+\sum_{n'+n''=n} (A_{n''})_{n'}
-\frac{1}{2}\sum_{n'+n''=n}  \langle A_{n'},A_{n''}\rangle,
\end{align*}
for $\kappa=1$, where the notation is $A_n\in\g$ and $(A_n)_m\in \hat{\g}$.
\end{example}
\begin{comment}\begin{example}\label{exm_critical}
Let $\V=\bV^\kappa(\g)$ be the affine Lie algebra at critical level $\kappa=-h^\vee$. Then the center is the ring of functions on $G^\vee$-opers \cites{FF92,Fr05}, see \cite{FBZ}, Section 18.4. Since opers are the classical Hamiltonian reduction of $G^\vee$-connections, and there should be a version....  
%Make connection to  \cite{ALZ16} precise.
\end{example}
\end{comment}
\begin{example}\label{exm_SF}
Let $\V=\SF$ the vertex super algebra of symplectic fermions resp. its even part, which carries an action of $\mathrm{SL}_2$, which we silently identify with $\mathrm{Sp}_2$. It is the smallest case of the Feigin-Tipunin algebra \cites{FGST05,FT10, AM14, Len21, Sug21, CLR23} and its category of representations is equivalent to the category of representations of a small (quasi-)quantum group $\tilde{u}_q(\sl_2)$ for $q^4=1$, as was proven in \cites{FGST05, AM08, TW13, CGR20, CLR21, GN21, CLR23}. Then the algebra $\SF_{\d+A}$ defined by the concrete commutator relations \cite{FL24} Lemma 7.3 and below in Definition \ref{def_deformedModeAlgebra} should correspond to $(\d+A)$-twisted modules.
\end{example}

In the present article, we study the case of irregular singularities, that is, $A(z)$ has a pole of order $N+1$, where $N\geq 1$ is called  \emph{Poisson order}, so locally
$$\d+A=\d+\sum_{n\leq N} A_n z^{-n-1}$$
with $A_0$ the regular singular term and $A_1,\ldots,A_N$ irregular terms. The most tangible implication is that, for irregular singularities, the  Frobenius method of solving the corresponding differential equation first for $A_0$ and then recursively by a power series fails, because the powers grow in both positive and negative power directions. 

We assume now for simplicity the Poisson order to be $N=1$. We further assume for simplicity that the irregular term $A_1$ is a  semisimple element of $\g$, which is a usual nondegeneracy assumption. Then for sufficiently small $z$ the matrix-valued function $A(z)$ is diagonalizable and thus admits an analytic similarity transformation to 
$$\d+\Xi z^{-2}+ \Lambda z^{-1}+\text{ regular terms }$$
with $(\Xi,\Lambda)$ two Lie algebra elements in the Cartan part $\mathfrak{h}\subset \g$ (for $\sl_n$: diagonal matrices). 
One can obtain a formal gauge transformation $F(z)\in G((z))$, such that the connection is in \emph{Birkhoff normal form} \cite{Birk1913a}
$$\tilde{A}=\Xi z^{-2}+ \Lambda z^{-1},\qquad
F[\d+\tilde{A}]=\d+A$$
see for example \cite{JLP76a} Section 3 or \cite{Boalch02} Lemma 1. The invariants $(\Xi,\Lambda)$ are called \emph{formal type} of $\d+A$. 
The results in the present article on $\cC_{\d+A}$ will, as expected, turn out to only depend on the formal type $(\Xi,\Lambda)$.

In the context of conformal field theory, irregular singularities appear in work of Witten \cite{Wit08}, who aims at extending the gauge theory approach to geometric Langlands correspondence to wild ramification, in work of Feigin, Frenkel and Toledano Laredo \cite{FFTL10} for Gaidin modules and in work of Gaiotto and Teschner \cite{GT12} studying irregular conformal blocks of the Virasoro algebra motivated by AGT-correspondence for Argyres-Douglas theories.Needless to say, the concept of a category fibring over the stack of connections resp. local systems, including irregular singularities resp. wild ramifications, is abundantly present in the topic of Langland correspondence, see for example \cite{Gait15}. Perhaps some aspects below  become clearer in the context of Moy-Prasad theory \cite{Yang22}.
Regarding the deformation by a connection in general, it is interesting to compare our setup in \cite{FL24} with the $3$D quantum field theories and  $4$D-topological field theories obtained as well from the big center quantum group in \cites{CDGG24,Kin24}. 

\begin{remark}
Associated to a connection is the differential equation 
$$(\d+A(z))\psi(z)=0.$$
It appears in several ways in our context: Its formal solutions can be used to transform the connection to the trivial connection, formally. In the explicit setting of this article, it could be used to determine infinite linear combinations of $\psi^\pm_i$ which fulfill the relations of the untwisted symplectic fermions, and in the same way they give for the irreducible twisted modules below (which becomes indecomposable for $A=0$, see Example \ref{exm_regular}) submodules in some algebraic closure.
On the other hand, Lemma \ref{lm_trafoGlobal} links the differential equation to the action of $L_{-1}$, which is not surprising in view of the Ward identities. 
\end{remark}

We finally discuss the \emph{Stokes phenomenon}: Typically, in the irregular case the formal series $F(z)$ is not convergent in any neighborhood of $0$. Hence, in order to classify \text{$\g$-connections} up to analytic gauge transformations, we need more invariants in addition to the Birkhoff normal form \cite{Birk1913b}.

This turns out to be connected to the observation that solutions of differential equations with irregular singularity have essential singularities in $z=0$ and those have different asymptotics in different sectors, separated by directions where the solutions oscillates. A standard example is the Airy-equation at $z=\infty$. Let us instead give a trivial example:
\begin{example}
The differential equation 
$$\left(\frac{\d}{\d z}+\begin{pmatrix} \lambda & 0 \\0 & -\lambda \end{pmatrix}z^{-2}\right)\psi(z)=0$$
has a basis of solutions 
$$\begin{pmatrix} e^{\lambda/z} \\ 0 \end{pmatrix},\quad
\begin{pmatrix} 0 \\ e^{-\lambda/z}  \end{pmatrix}
$$
The solutions have different asymptotics in the two sectors $\Re(z)>0$ and $\Re(z)<0$, and they are oscillating on the rays $\Re(z)=0$.
\end{example}
 We briefly discuss the case $\g=\sl_2$ and Poisson order $1$, which has been worked out in great detail in \cites{JLP76a,JLP76b}, and for general $\g$ and general order we refer the reader to \cite{Boalch02} Section 2 and  \cite{Boalch14} Section 7, which we now discuss briefly: The single pair of anti-Stokes directions are given by the eigenvalues $\pm\lambda$ of the irregular term $A_1$, in these directions the solution decreases most rapidly. They are representatives for the two sectors (directions with positive and negative real value of $\lambda/z$) in which one of the solutions has a good asymptotic $z\to 0$. Now, to the irregular term is associated its stabilizer subgroup $H$, in our case generically only the torus, and for each direction $d$ a Stokes subgroup, in our case the positive and negative unipotent subgroup $U_\pm$. To each $\g$-connection $\d+A$ with irregular term $\Xi$ is associated an invariant in the Poisson dual group of $\mathrm{SL}_2$ by
$$\d+A\quad \longmapsto \quad (e^{2\pi\i\Lambda}, S_+,S_-) \in H\times U_+\times U_-, $$
where the Stokes matrices $S_\pm$ roughly measure the transformation between of the solutions with good asymptotics in different Stokes sectors. This is a complete set of invariants of $\sl_2$-connections of Poisson order~$1$ up to analytic gauge transformations. Moreover, analytic solutions can be obtained from the formal solutions in a given sector by Borel resummation in the direction $d$. In our example involving $e^{\pm\lambda/z}$, the Stokes matrices are trivial, or differently said, the solutions with good asymptotics and the Borel resummations in different Stokes sectors are consistent. However, a similar differential equation with an additional non-diagonal term is formally, but not analytically, equivalent to this, and has nontrivial Stokes matrices. We discuss this example (whose solution is the incomplete Gamma function, and as special cases the exponential integral) in the appendix.  A good discussion with examples can also be found in the appendix of \cite{Wit08}.

\begin{example}[Confluence]\label{exm_confluence}
Irregular singularities arise naturally, if for a parametrized family of connections with regular singularities two of these singularities collide for a limit of the parameter. For example, the hypergeometric functions (with three regular singular points) have as limit the confluent hypergeometric functions (with one irregular and one regular singular point), including the Bessel functions, the Airy functions, and the incomplete Gamma functions we treat in the appendix,  and the Whittaker functions appearing in Whittaker modules of $\mathrm{SL}_2(\mathbb{R})$, which should also be related to our findings. 

For such a collision of regular singularities, the resulting monodromy is clearly the monodromy around both regular singularities, but the individual monodromies remain visible in the Stokes sectors. A nice overview of this can be found in \cite{Hor20}. In \cite{GT12} confluence is used to compute conformal blocks of three-punctured spheres with irregular singularities. 
\end{example}
\medskip
\begin{problem}\label{prob_confluence}
The intertwining operators should be  used to define and determine the bifunctor \eqref{formula_bifunctor}. Does the confluence procedure in \cite{GT12} produce all intertwining operators admitted by Stokes data?

On the other hand, any such intertwining operator should be a usual intertwining operator for the Virasoro algebra. In Examples \ref{exm_affine} and \ref{exm_SF} discussed in this paper, these are Whittaker modules, so one should compute the fusion rules for such modules over the Virasoro algebra, which to our knowledge has not been done, and match the result. 
\end{problem}

P. Boalch has developed the theory of Stokes data into a \emph{wild Riemann-Hilbert correspondence} \cite{Boalch14}, in that he determines explicitly for $\Sigma_{g,n}$ with irregular singularities, under which conditions on the Stokes data in each point there exists a global $\g$-connection with these prescribed singularities. 
\begin{problem}
Develop a notion of crossed braided tensor categories graded by Stokes data. We would very much like to have classification results as in \cites{ENOM10,DN21}. The necessary (higher categorical) notion of a category fibring over a stack of local systems can make use of \cite{Gait15}.
\end{problem}

In the realm of the present article, it is interesting if and where Stokes data will appear. One would expect this, whenever we ask for analytical functions, for example in the definition of fusion rules. Let us also add an interesting problem that is beyond but supposedly related to the present article:
\begin{problem}
The Stokes data carries an action of the braid group, which is mysteriously related to the action of the Hecke algebra on the Kac-DeConcini-Procesi quantum group, who has a large center containing the Poisson dual Lie group $G^*$ \cites{DCKP92, Boalch02, TLX23}. 
On the other hand, we have found in \cite{FL24} regular singularities to correspond to Borel parts of this quantum group.  
\end{problem}

\subsection{Content of this article}

Section \ref{sec_definition} starts with the main definition in this article, an infinite-dimensional Clifford algebra $\SF_{\d+A}$ depending on a connection $\d+A$ on the formal disc. Explicitly, we take generators $\psi^+_n,\psi^-_n$ for all $n\in\Z$ and propose the following relations for the  anticommutators:
$$\{\psi^a_m,\psi^b_n\}=m(e^a,e^b)\delta_{m+n=0}+\sum_{k\in\Z} C_k^{ab}\delta_{m+n=k},$$
for all $a,b\in \{+,-\}$, where $(e^a,e^b)=\begin{psmallmatrix}
0& -1 \\ 1 & 0
\end{psmallmatrix}_{a,b}$ is the standard symplectic form and the symmetric matrix $C_k$ is related to the connection's coefficients $A_k\in\sl_2$  by 
$C_k=\begin{psmallmatrix}
0& -1 \\ 1 & 0
\end{psmallmatrix}A_k$. We also check that this formula would coincide with the proposed commutator formula \eqref{formula_twistedJacobi}.

\begin{example}\label{exm_regular}
For $A=0$ this definition reduces to the mode algebra of the vertex super algebra of symplectic fermions, whose representation theory is related to the small quantum group $\tilde{u}_q(\sl_2)$ at $q^4=1$, with two simple representations and indecomposable projective representations of the typical diamond-shaped form:
\begin{center}
\begin{tikzcd}[scale=0.4]
    & 
    \SF^{<0}v
    \ar[dl,swap,"\psi_0^-", shift right=-0.5ex]
    \ar[dr,"\psi_0^+", shift left=-0.5ex] & \\
    \SF^{<0}\psi_0^-v
    \ar[dr, swap, "\psi_0^+", shift right=-0.5ex] 
    %\ar[ur, "\bpsi_1", shift right=-0.5ex] 
    && 
    \SF^{<0}\psi_0^+v 
    \ar[dl, "\psi_0^-", shift left=-0.5ex] \\
    %ar[ul, swap, "\psi_1", shift left=-0.5ex] \\
    &
    \SF^{<0}\psi_0^-\psi_0^+v
    %\ar[ul, "\psi_1", shift right=-0.5ex]
    %\ar[ur, swap, "\bpsi_1", shift left=-0.5ex]
\end{tikzcd} 
\end{center}
For $\d+A$ any regular connection, the category is equivalent to the case $A=0$, as there are only lower degree contributions.
For $\d+A$ any regular singular connection with monodromy $g=e^{2\pi\i A_0}$ in $\mathrm{SL}_2$, there is a category of highest-weight representations equivalent to the $g$-twisted modules of symplectic fermions, depending on $A_0$ being semisimple or nilpotent, see \cite{FL24} Figure 1. 
\end{example}

\begin{remark}
    Representations of $\SF_{\d+A}$ supposedly are examples of $(\d+A)$-twisted representations of the vertex algebra of symplectic fermions in the sense discussed in the previous section.
    
    The explicit definition of $\SF_{\d+A}$, which is the basis of the present paper, we originally obtained in \cite{FL24} Lemma 7.3 as a fibre of a vertex algebra with a big central subalgebra isomorphic to the ring of functions on the space of $\g$-connections. We have constructed this vertex algebra as a semiclassical limit of the generalized quantum Langland kernel, in a similar way as the affine Lie algebra at critical level. 
\end{remark}

We then discuss a global definition of $\SF_{\d+A}$ as algebra of functions with values in the symplectic vector spaces $\C^2$, on which $\SL_2$ acts, turned into a Clifford algebra by  $\Res(\connection f,g)$. This is analogous to the global definition of the affine Lie algebra, see \cite{FBZ04}  19.6. For one, this demonstrates in another way that our initial definition was reasonable, but more importantly, such a global description allows to express higher genus conformal block, and possibly also chiral cohomology \cites{BD04, vEH24}, more directly then solely from the vertex algebra definition.

We finally discuss gauge transformations, how they explicitly look in the local picture, and how they map $(\d+A)$-twisted modules to $F[\d+A]$-twisted modules as expected. We also present an example of a singular gauge transformation, which could be interpreted as spectral flow.

\bigskip

In Section \ref{sec_twistedRepresentations} we discuss a suitable category of representations $\cC$ for $\SF_{\d+A}$ depending on a fixed triangular decomposition and from now on we concentrate on $\d+A$ being irregular of Poisson order $1$ (the case of general Poisson order is similar). The representations resemble Whittaker representations. More precisely, we have a category of modules induced from a given action of the subalgebra $\SF_{\d+A}^{01}$ generated by $\psi_0^\pm,\psi_1^\pm$ (whose relations are nontrivial and depend on the formal type, see Example \ref{ex_PoissonOrderOne}) and with a trivial action of the centralizing subalgebra $\SF_{\d+A}^{\geq 2}$ generated by $\psi_n^\pm,n\geq 2$. We prove that the induction functor is actually an equivalence of abelian categories (similarly to, say, the Heisenberg Lie algebra), so the simple modules are precisely the induced modules of simple modules over $\SF_{\d+A}^{01}$. In the irregular case this algebra turns out to be  always simple, so there is a unique simple module $\smash{\SF_{\d+A}^{\leq 0}}$, on which however $\psi_1^\pm$ acts nonzero. 

There is a \emph{vertical $\Z$-grading} by assigning $\deg_{vert}(\psi_{-i}^\pm)=i$, which is the familiar untwisted $L_0$-grading, and a \emph{horizontal $\Z$-grading} by assigning  $\deg_{horiz}(\psi_{-i}^\pm)=\pm 1$, which is the familiar untwisted $\sl_2$-grading or lattice-grading in a free-field realization, but both gradings need not be preserved in the twisted case.
Regular connections solely produce contributions in strictly lower vertical degree. Regular singular connections also produce contributions in the equal vertical degree and irregular connections also produce contributions in strictly higher vertical degree. If the connection is in Birkhoff normal form, then the horizontal degree is preserved.   

For comparison, we also discuss the more familiar case of a regular singular connection, where there is similarly an induction functor from the subalgebra $\smash{\SF_{\d+A}^{0}}$ generated by $\psi_0^\pm$, and it is also an equivalence of categories. The representation theory of $\SF_{\d+A}^{0}$ depends crucially on the regular singular term $A_0\in \sl_2$ being zero, nilpotent or semisimple, and we encounter accordingly the three different types of twisted modules for the symplectic fermions resp. the Kac-DeConcini-Procesi quantum group in \cite{FL24} Figure 1.

We also remark that the  gauge transformations $\SL_2((z))$ preserve the choice of central subalgebra $\smash{\SF_{\d+A}^{\geq 2}}$ and our subcategories of modules $\cC$, while for example the singular gauge transformation we called spectral flow shifts $\SF_{\d+A}^{\geq 2}$ to a different choice of central subalgebra.

\bigskip

In Section \ref{sec_Virasoro} we turn our attention to the Virasoro algebra. An important, expected, but technically somewhat involved result is a version of the Sugawara construction:

\begin{introLemma}[\ref{lm_Vir}]
The following operators in $\SF_{\d+A}$
$$L_{n}^{\d+A}:=\sum_{i+j=n}\normord{\psi_i^-\psi_j^+}
\,+\,c_n1,\qquad c_n=\frac12\sum_{i+j=n} \left(C_{i}^{-+}C_{j}^{+-}+C_{i}^{++}C_{j}^{--}\right)$$
fulfill the relations of the Virasoro algebra at central charge $c=-2$. 
\end{introLemma}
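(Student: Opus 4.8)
The plan is to verify the Virasoro relations
\[
[L_m^{\d+A}, L_n^{\d+A}] = (m-n)L_{m+n}^{\d+A} + \frac{c}{12}(m^3-m)\delta_{m+n=0}
\]
by direct computation from the deformed anticommutator relations in Definition \ref{def_deformedModeAlgebra}, treating the problem as a controlled perturbation of the classical $c=-2$ symplectic fermion Sugawara construction. First I would record the basic brackets one needs repeatedly: since $L_n^{\d+A}$ is built from the bilinear $\normord{\psi_i^-\psi_j^+}$, the commutator $[L_m^{\d+A},\psi_k^a]$ is computed by moving $\psi_k^a$ past the two fermion modes using $\{\psi^a_i,\psi^b_j\}=i(e^a,e^b)\delta_{i+j=0}+\sum_k C^{ab}_k\delta_{i+j=k}$. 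The $\delta_{i+j=0}$-part reproduces the familiar $[L_m,\psi_k^a]$ up to the expected weight-$1$ shift, while the extra $C^{ab}_k$-terms produce additional fields of shifted degree; the symmetry $C_k^{ab}=C_k^{ba}$ (equivalently, that $C_k$ is symmetric, which comes from $A_k\in\sl_2$) is what makes $\normord{\psi^-\psi^+}$ the right object and must be used carefully when reordering.

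Next I would assemble $[L_m^{\d+A},L_n^{\d+A}]$ by contracting $[L_m^{\d+A},\psi_i^-]\psi_j^+ + \psi_i^-[L_m^{\d+A},\psi_j^+]$ summed over $i+j=n$, and reorganize the result into three pieces: (i) the ``classical'' piece that only sees the $\delta_{i+j=0}$ part of every anticommutator, which reproduces $(m-n)L_{m+n}$ together with the standard $\tfrac{-2}{12}(m^3-m)\delta_{m+n=0}$ anomaly exactly as in the undeformed case; (ii) cross terms mixing one classical contraction with one $C$-contraction, which should organize into $(m-n)$ times the correction term $c_{m+n}1$ plus possibly a normal-ordering-induced correction that feeds back into (i); and (iii) the purely $C$-bilinear terms, which are scalars and must be shown to vanish in the commutator (they are symmetric under $m\leftrightarrow n$ up to the structure of $C$, hence cancel) or to combine into the stated $c_n$ in a way consistent with (ii). The definition $c_n=\tfrac12\sum_{i+j=n}(C_i^{-+}C_j^{+-}+C_i^{++}C_j^{--})$ is exactly the scalar that makes the $(m-n)c_{m+n}$ bookkeeping close, so the verification amounts to checking that the $C$-dependent scalars produced by reordering match $\tfrac{c}{12}$-corrected multiples of $c_n$ and ultimately cancel in the antisymmetrized bracket except for the central term.

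A cleaner organizing principle, which I would use to avoid a brute-force mode sum, is to note that $A_k\ne 0$ only for finitely many $k$ (here $k\le N=1$), so all the $C$-corrections involve only $C_0, C_1$ (and $C_k$ for $k\le 0$ in the regular-singular case); this makes the ``extra'' terms genuinely finite-rank perturbations and lets one track them symbol by symbol. Alternatively one can realize $L_n^{\d+A}$ as a gauge/coordinate transform of the standard Sugawara vector — Example \ref{exm_affine} and the gauge-covariance Lemma \ref{lm_gaugeModules} suggest the deformed $L_n$ should be the conjugate of the ordinary Virasoro generators by the formal gauge transformation bringing $\d+A$ to $\d$ — in which case the Virasoro relations and the invariance of the central charge are automatic, and the only work is to check that this abstract conjugate has precisely the normal-ordered expression with the scalar $c_n$ claimed. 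I would present the direct computation as the main argument (it is self-contained and does not require convergence of the gauge transformation) and mention the conjugation viewpoint as a conceptual check.

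The main obstacle I anticipate is the normal-ordering bookkeeping in piece (ii)–(iii): when one commutes $L_m^{\d+A}$ with a normal-ordered bilinear and then re-normal-orders, the $C$-terms and the weight-shift in the $\delta_{i+j=0}$-term interact, producing finitely many ``anomalous'' scalars beyond the naive ones, and it is precisely the asymmetry between the positive-degree (irregular) contributions and negative-degree (regular) contributions — noted in the remark after \eqref{formula_twistedJacobi} — that one must handle to see that the only surviving scalar in $[L_m^{\d+A},L_n^{\d+A}]-(m-n)L_{m+n}^{\d+A}$ is the $\delta_{m+n=0}$ central term with coefficient $-2/12\cdot(m^3-m)$ and that all $C$-dependent scalars cancel. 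Verifying this cancellation — essentially that $c_n$ as defined is the unique choice making $L_n^{\d+A}$ close into a Virasoro algebra with unchanged central charge — is the technical heart of the proof.
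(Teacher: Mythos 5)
Your main route (direct mode computation, then absorb the leftover scalar into a coboundary $c_n$) is exactly what the paper does, though the paper computes the bilinear-bilinear commutator $[\psi_i^-\psi_j^+,\psi_k^-\psi_l^+]$ directly and sums, rather than first establishing $[L_m^{\d+A},\psi_k^a]$ as you propose; the latter is also in the paper (Lemma~\ref{lm_trafoGlobal}) but derived \emph{after} Lemma~\ref{lm_Vir} rather than used as input. Either order works; yours would factor the normal-ordering bookkeeping somewhat differently but is a valid variant.

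Two points where your outline has the bookkeeping slightly off, and which would trip you up if you ran the computation.
First, the attribution of $(m-n)c_{m+n}$ is reversed relative to what actually happens: in the paper's term-by-term analysis the cross terms mixing one $\delta$-contraction with one $C$-contraction (its cases (b), (c), (e)) \emph{cancel completely} after a relabeling of indices, and the entire $(m-n)c_{m+n}$ artifact comes from the purely $C$-bilinear term (its case (f)), not from the cross terms. Your piece (iii) should not be expected to vanish; it is precisely the $(m-n)c_{m+n}$ coboundary, and it does \emph{not} cancel by $m\leftrightarrow n$ antisymmetry --- the lattice sum $\sum(\indic_{i\gtrsim l}-\indic_{k\gtrsim j})$ over the constrained index set evaluates to $\tfrac{m-n}{2}\,\delta_{a+b=m+n}$, which is antisymmetric and supplies the prefactor. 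This counting lemma is the technical heart and your outline does not name a mechanism that would produce it.

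Second, the proposed conceptual shortcut of conjugating the undeformed Sugawara vector by the formal gauge transformation bringing $\d+A$ to $\d$ does not go through as stated: an irregular connection cannot be brought to the trivial connection by any $F\in\SL_2((z))$, only to Birkhoff normal form $\Xi z^{-2}+\Lambda z^{-1}$; gauging all the way to $\d$ would need a transcendental factor like $e^{\xi/z}$ that is not a Laurent series, so the conjugation argument needs an enlarged group and a separate justification that the Virasoro relations are stable under it. The paper sidesteps this entirely by the direct computation. (Compare Example~\ref{exm_affine}, where even for the affine algebra the explicit $\Delta$-shift of $L_n$ is recorded rather than deduced from a naive conjugation.)

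With those corrections --- do not expect (iii) to vanish, expect (ii) to cancel, and do not lean on the gauge-conjugation shortcut for the irregular case --- your plan lines up with the paper's proof.
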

Here $\normord{\psi_i\psi_j}$ is a particular choice of normal ordering  given in Definition \ref{def_normord}. We remark that the additional constant $c_n$ is the residue of the determinant of $A(z)$ as an $\sl_2$-valued function under the identification. For the usual $\sl_2$-twisted modules, correspondingly $A(z)=A_0z^{-1}$, it recovers the familiar shift in $L_0$ for semisimple elements $A_0$. A~similar shift is the quadratic term in Example \ref{exm_affine}. 

We also prove in Lemma \ref{lm_trafoGlobal} a formula for $[L_n^{\d+A},\psi_i^\pm]$, which could be globally written 
$$[L_{-1}, \psi(z)]=\left(\frac{\partial}{\partial z}+A(z)\right){\psi}(z)$$
if the coefficients $A_n\in\sl_2$ act on $\psi^\pm_k$ with the dual representation $\overline{\C^2}$.

In Section \ref{sec_Formulas} we spell out the action of $\psi_i^\pm$ and $L_n$ on the induced modules very explicitly in terms of (anti-)derivations. We also spell out the considerably simpler  formulas for a connection in Birkhoff normal form. 

As an example, we compute the action of $L_n,\,n>-1$ on the elements $v$ and $\psi_0^-v$. We find in the irregular case that those elements are Whittaker vectors, that is, $L_2$ and $L_1$ act by nonzero scalars. Our example also illustrates, how the indecomposable module in Example \ref{exm_regular} in the untwisted case becomes simple in the irregularly twisted case.

Section \ref{sec_VirasoroStructure} contains our main technical result:
\begin{introTheorem}[\ref{thm_VirasoroStructure}]
For $\d+A$ an $\sl_2$-connection, irregular of Poisson order $1$ as in Example~\ref{ex_PoissonOrderOne}, the simple induced module $\SF_{\d+A}^{\leq 0}$ is as a Virasoro representation the direct sum of Whittaker modules with generator $w_k,k\in\Z$ such that
  \begin{align*}
        L_{n\geq 3}^{\d+A} w_k &=0   \\ 
        L_2^{\d+A} w_k,
        &=\frac12 \xi^2 \cdot w_k \\
        L_1^{\d+A} w_k ,
        &=\xi(k+\frac12 \varepsilon)\cdot w_k.
    \end{align*}
\end{introTheorem}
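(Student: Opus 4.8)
The plan is to combine the explicit action of $L_n^{\d+A}$ on the induced module $\SF_{\d+A}^{\leq 0}$ (from Section~\ref{sec_Formulas}) with the Sugawara relations of Lemma~\ref{lm_Vir}, and to identify the correct Whittaker generators by a change of basis that "diagonalizes" the horizontal grading. First I would recall that, since $\d+A$ has Poisson order $1$ and $\psi_n^\pm$ acts trivially for $n\geq 2$, the only nontrivial positive modes of $\psi$ that survive are $\psi_0^\pm$ and $\psi_1^\pm$; hence, in $L_n^{\d+A}=\sum_{i+j=n}\normord{\psi_i^-\psi_j^+}+c_n$, acting on the module, only finitely many summands can fail to annihilate a given vector, and for $n\geq 3$ every summand $\normord{\psi_i^-\psi_j^+}$ with $i+j=n$ forces one of $i,j$ to be $\geq 2$, so $L_{n\geq 3}^{\d+A}$ acts by the constant $c_n$. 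A short computation with the explicit $C_k^{ab}$ (recall $A_k=0$ for $k\geq 2$, and for the Birkhoff form $A_0=\Lambda$, $A_1=\Xi$ both in the Cartan, so $C_1^{++}=C_1^{--}=0$ while $C_1^{-+},C_1^{+-}$ are $\pm\xi$) shows $c_{n\geq 3}=0$, $c_2=\frac12\xi^2$, which already pins down the scalars $L_{n\geq3}^{\d+A}=0$ and the $L_2^{\d+A}$-eigenvalue $\frac12\xi^2$; I would first treat the Birkhoff case, where the horizontal grading is preserved, and then transport the result to general $A$ of the given formal type via Lemma~\ref{lm_gaugeModules}, since $\cC_{\d+A}$ only depends on $(\Xi,\Lambda)$.

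Next I would analyze $L_1^{\d+A}=\sum_{i+j=1}\normord{\psi_i^-\psi_j^+}+c_1$, which on the module reduces to $\normord{\psi_0^-\psi_1^+}+\normord{\psi_1^-\psi_0^+}$ plus lower-mode pieces that shift vertical degree by $-1$ and are nilpotent in a suitable filtration; the key point is that the surviving "top" piece is built from $\psi_0^\pm$ and $\psi_1^\pm$, i.e.\ from the subalgebra $\SF_{\d+A}^{01}$. Here I would use that on the simple $\SF_{\d+A}^{01}$-module the operators $\psi_1^\pm$ act (by the relations in Example~\ref{ex_PoissonOrderOne}) as scalars times $\psi_0^\pm$ up to the central/Clifford corrections, so that $\normord{\psi_0^-\psi_1^+}+\normord{\psi_1^-\psi_0^+}$ becomes, on each horizontal-degree-$k$ homogeneous layer $\SF_{\d+A}^{<0}\cdot(\text{length-}k\text{ word in }\psi_0^\pm)\cdot v$, multiplication by a scalar linear in $k$. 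Carefully tracking the normal-ordering convention of Definition~\ref{def_normord} and the constant $c_1$ gives the eigenvalue $\xi(k+\tfrac12\eps)$, with $\eps$ recording the monodromy/Cartan datum $\Lambda$; I would extract $\eps$ precisely by comparing with the $A_1\to 0$ limit, where the module degenerates to the twisted symplectic-fermion module of Example~\ref{exm_regular} and $L_1$ must vanish on the appropriate vectors.

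It then remains to see that the module is, as a Virasoro module, a \emph{direct sum} of Whittaker modules on these generators, rather than merely containing them. For this I would let $w_k$ run over a basis of the horizontal-degree-$k$ "ground" subspace — concretely the images of length-$k$ words in $\psi_0^\pm$ applied to $v$ (finitely many, by the Clifford relations at level $0$) — verify $L_{n\geq 1}^{\d+A}w_k$ is as stated, and then show the Virasoro submodule generated by all the $w_k$ is everything, using that $\SF_{\d+A}^{\leq 0}$ is generated over $L_{<0}^{\d+A}$-type operators together with $\psi$'s, and that Lemma~\ref{lm_trafoGlobal}'s commutator $[L_n^{\d+A},\psi_i^\pm]$ lets one rewrite any monomial $\psi_{-i_1}^{\pm}\cdots v$ as a Virasoro-descendant of some $w_k$ modulo lower terms. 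Finally, a weight/character count — comparing the graded dimension of $\SF_{\d+A}^{\leq 0}$ with that of the Whittaker modules $\mathrm{Wh}(\xi^2,\xi(k+\tfrac12\eps))$, which are free over $\C[L_{-1}^{\d+A},L_{-2}^{\d+A},\dots]$ in the appropriate sense — forces the sum to be direct. The main obstacle I anticipate is the bookkeeping in this last step: controlling the interaction between the vertical grading (which the twisted $L_0^{\d+A}$ no longer preserves on the nose, since $A_1\neq 0$ contributes in strictly higher vertical degree) and the horizontal grading, so that the decomposition into $w_k$-Whittaker-summands is genuinely a direct-sum decomposition and not just a filtration; I expect this to require a careful induction on vertical degree using the nilpotence of the degree-lowering parts of $L_n^{\d+A}$ established in Section~\ref{sec_Formulas}.
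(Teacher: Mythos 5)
Your overall strategy (find Whittaker vectors, then match characters) is the paper's, but several of your intermediate steps misstate the objects, and the step you flag at the end as ``bookkeeping'' is in fact the mathematical heart of the proof and is not resolved by what you write.

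First, the indexing. You propose letting ``$w_k$ run over a basis of the horizontal-degree-$k$ ground subspace \dots the images of length-$k$ words in $\psi_0^\pm$ applied to $v$.'' That is not what the Whittaker generators are. The paper's $v_k$ for $k>0$ is $\psi^-_{-(k-1)}\cdots\psi^-_0$ (a monomial in \emph{negative} modes, the top vector of a Fock module of horizontal degree $k$), $v_0=1$, and for $k<0$ it uses $\psi^+$'s; there is exactly one $w_k$ per $k\in\Z$, not a basis of a finite subspace. Similarly, $\psi_1^\pm$ does not act on the induced module ``as scalars times $\psi_0^\pm$''; it acts as the anti-derivation $\xi\,\partial/\partial\psi_0^\mp$ (formula \eqref{formula_positiveModesActing}), so the $L_1^{\d+A}$ eigenvalue $\xi(k+\tfrac12\varepsilon)$ comes from a genuine computation with shift operators acting on the monomial $v_k$, not from a pointwise rescaling.

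Second, and more seriously, you write that the Whittaker modules are ``free over $\C[L_{-1}^{\d+A},L_{-2}^{\d+A},\dots]$.'' They are not: the universal Whittaker module for $\Vir$ is $\C[L_0,L_{-1},L_{-2},\dots]\,w$, induced from $\Vir^{>0}$, so $L_0$ is a free polynomial generator. This is precisely why the naive character count fails: degree by degree the Whittaker module is strictly larger than the Verma-type $\C[L_{-1},L_{-2},\dots]$, while $\SF_{\d+A}^{\leq 0}$ looks like a sum of Fock modules. Matching the two requires the central insight that the term $\xi\,\Shift_{n-1}$ inside $L_n^{\d+A}$ raises the vertical degree by $-n+1$ rather than $-n$; the paper introduces the refined grading $\deg_{total}=\deg_{vert}+\deg_{fine}$ with $\deg_{Vir}(L_n^{\d+A})=-n+1$, and then the agreement of graded dimensions becomes the nontrivial $q$-series identity of Lemma~\ref{lm_characterIdentity} (a combination of $q$-Pascal and $q$-Vandermonde). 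Without this identity your ``character count'' has nothing to compare; with your (incorrect) polynomial ring the counts would simply not match. Injectivity of each $f_k:M_{(a_1,a_2)}\to\SF_{\d+A}^{\leq 0}$ also does not come for free from ``rewriting monomials via Lemma~\ref{lm_trafoGlobal}'': the paper obtains it by citing irreducibility of the universal Whittaker module (Theorem~\ref{thm_WhittakerIrrep}, from \cite{LGZ11}), and only then combines injectivity with the character identity to get a bijection.

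Finally, your proposed reduction from general $\d+A$ to Birkhoff normal form via Lemma~\ref{lm_gaugeModules} is a legitimate alternative route to the paper's Corollary~\ref{cor_WhittakerVectorsII} (which instead perturbs the $v_k$ directly by strictly lower-degree corrections), but you would need to justify that the relevant formal gauge transformation lands in $\SL_2((z))$ (hence preserves $\SF_{\d+A}^{\geq 2}$ and the subcategory $\cC$) and that it intertwines the twisted Virasoro actions; the paper sidesteps this by the perturbative argument. In short: the skeleton is right, but you need (i) the correct description of the Whittaker modules as $\C[L_0,L_{-1},\dots]$, (ii) the modified grading and the $q$-binomial identity of Lemma~\ref{lm_characterIdentity} to actually compare characters, and (iii) an injectivity input such as Theorem~\ref{thm_WhittakerIrrep}.
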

This result is achieved as follows: First we check in Corollary \ref{cor_WhittakerVectors} that for a connection in Birkhoff normal (which means in particular that the horizontal grading is preserved) the following familiar vectors\footnote{These are of course the top vectors of the Fock modules of the projective representations in the untwisted case in Example \ref{exm_regular}.} 
 \begin{align*}
        v_{k}&=\psi^-_{-(k-1)}\cdots \psi^-_0 \\
        v_0&=1\\
        v_{-k}&=\psi^+_{-(k-1)}\cdots \psi^+_0 
 \end{align*}    
 are Whittaker modules with the asserted parameters, as we have already observed in the previous section in examples. Then, in Corollary \ref{cor_WhittakerVectorsII} we show that vectors $w_k$ with the same property exist without assuming Birkhoff normal form, because the additional terms in $A_i,i\geq 0$ only produce additional terms in lower vertical degree (and possibly different horizontal degree), and the initial choice $v_k$ can be corrected accordingly.

 Next, we discuss equivalence as graded vector spaces, or character formulas: If we compare the direct sum of two Fock modules, which is the untwisted case in Example~\ref{exm_regular}, or similarly  two  Virasoro Verma modules $\C[L_{-1},L_{-2},\cdots]v_k\oplus \C[L_{-1},L_{-2},\cdots]v_{k+1}$ and on the other hand the Virasoro Whittaker module 
$\C[L_0,L_{-1},\cdots]v_k$, then the latter seems much larger. Their equality is made possible by the fact that the vertical grading is not preserved. In a matter of speaking, allowing the degree to increase just by one creates ''infinitely much space''. This becomes transparent if we introduce an additional finer grading on $\SF^{\geq 0}$ given by $\deg_{fine}(\psi_i^\pm)=\frac{1}{2}$. In the untwisted case, we can rewrite the familiar character of Fock modules, which is given by partitions of $n=\deg_{vert}$, in terms of the finer grading $x=\deg_{fine}$. We get a parametrization in terms of pairs of partitions of $n$ into at most $k+x$ resp. $x$ parts, which explicitly correspond to monomials of these degrees in $\psi^-_i$ and $\psi^+_i$. The identity of characters is an easy consequence of $q$-Pascal identity and twice $q$-Vandermonde identity, see Lemma~\ref{lm_characterIdentity}
\begin{align*}
&q^{k(k-1)/2} {M+N-1 \choose M-k}_q
+q^{(k+1)k/2} {M+N-1 \choose M-k-1}_q\\
&=\sum_{x\geq 0} q^{(k+x)(k+x-1)/2} {M \choose k+x}_q
\cdot q^{x(x-1)/2} {N \choose x}_q
\end{align*}

Here, the Gau{\ss} binomial coefficient ${A+B\choose A}_q=\frac{[M+N]_q!}{[M]_q![N]_q!}$ introduced in \cite{Gauß1808} has as coefficient of $q^n$ the number of partitions of $n$ with at most $A$ summand and each summand at most $B$, see for example \cite{And98} Theorem 3.1. The intended identity of graded dimensions follows for $M=N=\infty$, but the finite versions suggest an additional filtration that is useful for inductive proofs, as we shall see.

We now turn to the irregularly twisted module: 

The usual action of a Virasoro algebra element $L_n$ on the Fock module raises $\deg_{vert}$ by $-n$, and the terms of the form $\psi_i^-\psi_j^+$ with $i,j>0$, which we now call $L_n'$, raise in addition $\deg_{finer}$ by one. For the twisted action with an irregular connection, $L_n^{\d+A}$ for $n\leq 0$ has among others a new term we call  $(L_n)_{\partial-1,\partial-1}=\xi\,\Shift_{n-1}$ in Section \ref{sec_Formulas}, which raises $\deg_{vert}$ by $-n+1$, and  there is  unchanged the term $L_n'$ and all other terms raise $\deg_{vert}$ by at most $-n$ and preserve $\deg_{fine}$. For this reason, we set on the induced module
$$\deg_{total}=\deg_{vert}+\deg_{fine}$$
and on $\Vir^{\leq 0}$ we set $\deg_{Vir}(L_n^{\d+A})=-n+1$, then the action of is compatible with these filtration. Note that if we had not added the fine grading, we would only see $\Shift_{n-1}$, which preserves the fine grading and hence generates not nearly the full module. For this new grading, we have an analogous character formula as above, just modified by the fine degree, whose result is the character of the Whittaker modules with shifted degree: 
\begin{align}
\begin{split}
&q^{k(k-1)/2} {M+N \choose M-k}_q\\
&=\sum_{x\geq 0} q^{(k+x)(k+x-1)/2} {M \choose k+x}_q
\cdot q^{x(x-1)/2} {N \choose x}_q \cdot q^x
\end{split}
\end{align}
For later use, we give in Lemma \ref{lm_constructive} a proof of this formula by an explicit bijection of partitions, which is surely known.  
By using the known irreducibility of the universal Whittaker modules, see Theorem~\ref{thm_WhittakerIrrep}, we conclude injectivity. Together with the previous character identity in Lemma \ref{lm_characterIdentity} we conclude bijectivity and hence the proof of Theorem~\ref{thm_VirasoroStructure}.

\bigskip 

We remark that on the associated graded module $L_n=L_n'+\xi\,\Shift_{n-1}$, and the associated graded Lie algebra is commutative, because all commutators have strictly lower degree. The $L_n'$ commute with each other and their action is well-studied in \cite{FF93}, in particular their dependencies when acting on a Fock module. The shift operators $\Shift_{n-1}$ commute with each other and their action can be described by power sums acting on alternating polynomials, in particular Newton's identity describes their dependencies. More sophisticated, we can use Schur polynomials to distinctively produce a particular basis element in $\SF^{\geq 0}$. In Section \ref{sec_constructive} we sketch from this and the explicit bijection in Lemma \ref{lm_characterIdentity} an alternative proof of Theorem \ref{thm_VirasoroStructure} that does not use irreducibility of the Whittaker module and is constructive in the sense that it produces inverse images.

 \begin{remark}\label{rem_triangular}
 The results in this article turn out to only depend on the formal type of the connection, and hence on the Birkhoff normal form.
 If one wishes to study in examples beyond Birkhoff normal form and with nontrivial Stokes matrices, a good starting point is to take all $A_k$ to be lower (or upper) triangular. This implies that there is still a horizontal filtration, instead of a grading. In particular the Whittaker vectors $w_k$ have contributions in lower vertical and  horizontal degree. 
A good explicit class of examples is 
$$\frac{\partial}{\partial z} 
+  \frac{1}{z^2}\begin{pmatrix} \xi & 0 \\ 0 & -\xi \end{pmatrix}
+  \frac{1}{z}\begin{pmatrix} \varepsilon & \tau \\ 0  & -\varepsilon \end{pmatrix},
%+ z^{k-1} \begin{pmatrix} 0 & 0 \\ \tau &  0 \end{pmatrix} 
\qquad \tau\neq 1.
$$ 
This connection  is discussed thoroughly in \cite{JLP76a}, note they use the coordinate $1/z$ around an irregular singularity at infinity. In particular their Corollary 5.1 states that for $2\varepsilon\not\in\N$ this is not equivalent to Birkhoff normal form and Lemma 3.4 gives the Stokes matrices in terms of Gamma-functions. If on the other hand $2\varepsilon\in-\N$, then one can find a finite gauge transformation to Birkhoff normal form. Note that the gauge transformation related to spectral flow in Example \ref{exm_gaugeSpectralflow}, which is analytic up to a simple pole in $z=0$, transforms this exceptional connections to the other exceptional connections in \cite{JLP76a} Corollary 5.1. 

In the appendix, we discuss the differential equation related to this example.  
 \end{remark}

\section{Definitions}\label{sec_definition}

\subsection{Local definition}

Consider the 2-dimensional vector space $\C^2$ with basis $e^a$ for $a\in\{+,-\}$ and standard symplectic form
$$(e^a,e^b)=\begin{pmatrix} 0 & 1 \\ -1 & 0\end{pmatrix}$$ 
preserved by the standard action of $\mathrm{Sp}_2=\mathrm{SL}_2$.
All matrices below have rows and arrows indexed by $a\in\{+,-\}$. 

\begin{definition}[Deformed mode algebra]\label{def_deformedModeAlgebra}
For a family of symmetric $2\times2$-matrices $C_k^{ab}$ with rows and columns indexed by $a,b\in\{+,-\}$ and $k\in \Z$ we define the Clifford algebra $\SF_{C}$ by generators $\psi^a_n$ with $a\in\{+,-\}$ and $n\in \Z$ and relations  
$$\{\psi^a_m,\psi^b_n\}=m(e^a,e^b)\delta_{m+n=0}+\sum_{k\in\Z} C_k^{ab}\delta_{m+n=k}$$
\end{definition}
For an $\sl_2$-connection on the formal punctured disc
$$\d+\sum_{n\in\Z} A_{-n-1}z^n,\qquad A_k\in\sl_2$$
we define $\SF_{\d+A}$ to be $\SF_C$ with the Clifford parameter 
$C^{ab}=(A_ke^a,e^b)$, or explicitly
\begin{align}\label{formula_convention}
C_k
&=\begin{pmatrix}
0& -1 \\ 1 & 0
\end{pmatrix}A_k
\quad\text{i.e.}\quad
\begin{pmatrix} C_k^{++} & C_k^{+-} \\C_k^{-+} & C_k^{--}\end{pmatrix}
=\begin{pmatrix} -A_k^{-+} & A_k^{++} \\ -A_k^{--} & A_k^{+-}\end{pmatrix}
=\begin{pmatrix} -A_k^{-+} & -A_k^{--} \\ A_k^{++} & A_k^{+-}\end{pmatrix}
\end{align}

\begin{example}\label{ex_PoissonOrderOne}
In the case of Poisson order $\leq 1$ and diagonal irregular term  we introduce the following notation for the irregular and regular singular terms 
$$C_k=0,k>1,\qquad 
C_1=\begin{pmatrix} 0 & \xi \\ \xi & 0\end{pmatrix},\qquad
C_0=\begin{pmatrix} \tau^+ & \varepsilon \\ \varepsilon & \tau^-\end{pmatrix}
$$
which correspond to the following connection 
$$\d
+\begin{pmatrix} \xi & 0 \\ 0 & -\xi\end{pmatrix}/z^2
+\begin{pmatrix} \varepsilon & \tau^- \\ -\tau^+ & -\varepsilon\end{pmatrix}/z
+\text{ regular terms}
$$
Regular connections correspond to $\xi,\varepsilon,\tau= 0$, regular singular connections correspond to $\xi=0$ (and $\tau$ or $\varepsilon$ nonzero) and irregular connections of Poisson order $1$ correspond to $\xi\neq 0$. In this case, we frequently consider the Clifford subalgebra $\SF_{\d+A}^{01}$ generated by $\psi^\pm_0,\psi^\pm_0$, which has then relations
\begin{align*}
\{\psi^\pm_1,\psi^\mp_0\} &= \xi\\
\{\psi^\pm_0,\psi^\mp_0\} &= \varepsilon\\
\frac12(\psi^\pm_0)^2=\{\psi^\pm_0,\psi^\pm_0\} &= \tau^\pm\\
\{\psi^\pm_1,\psi^\pm_1\}=\{\psi^\pm_1,\psi^\mp_1\} &= 0
\end{align*}
\end{example}

In the present article, we take Definition \ref{def_deformedModeAlgebra} as a definition, but we now want to briefly address, in which sense this should be thought of as a $(\d+A)$-twisted module: The essential axiom of the (local) definition of a vertex algebra modules is the \emph{Jacobi identity} or \emph{Borcherds identity}, respectively. A special case, which is in many cases equivalent, is the \emph{commutator formula}, see e.g. \cite{FBZ04} Section 5.1.5. For $g$-twisted modules, there are respective twisted versions of these identities. In out setting, we are in particular interested in versions that are formulated for actions of Lie algebras and which do not require the actions to be semisimple, and these are found in \cite{Huang10} and \cite{Bak16}, to which the reader is referred for details.

\begin{lemma}\label{lm_twistedCommutator}
The proposed (anti-)commutator formula \eqref{formula_twistedJacobi} for $(\d+A)$-twisted modules for the elements $\psi_{-1}^a1,\psi_{-1}^b1$ matches Definition \ref{def_deformedModeAlgebra} for the $(\d+A)$-twisted mode algebra.
\end{lemma}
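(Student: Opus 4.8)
The plan is to verify the proposed twisted (anti-)commutator formula \eqref{formula_twistedJacobi} directly for the pair of generators $a = \psi_{-1}^a 1$, $b = \psi_{-1}^b 1$ of the symplectic fermion vertex super algebra $\SF$, and to check that the resulting formula coincides with the defining relation $\{\psi_m^a, \psi_n^b\} = m(e^a,e^b)\delta_{m+n=0} + \sum_k C_k^{ab}\delta_{m+n=k}$ of Definition \ref{def_deformedModeAlgebra}. Concretely, $a_{(m)}^{\d+A}$ should be interpreted as $\psi_m^a$ (the twisted mode of $\psi^a(z)$), so the left-hand side of \eqref{formula_twistedJacobi} is exactly $\{\psi_m^a, \psi_n^b\}$ once one is careful that for two odd fields the graded commutator is the anticommutator.

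First I would recall the untwisted operator product expansion (or equivalently the untwisted $n$-th products) of the symplectic fermions: the only nonzero products among $a = \psi^a_{-1}1$, $b = \psi^b_{-1}1$ are $a_{(1)} b = (e^a,e^b)\,1$ (the central/Heisenberg-type term, since $\psi^a$ has conformal weight $1$ and the OPE is $\psi^a(z)\psi^b(w) \sim (e^a,e^b)/(z-w)^2$) and $a_{(0)}b = 0$. So in the sum $\sum_{j\geq 0}\sum_{k\in\Z}\big(\binom{m+A_k}{j}a\big)_{(j)}b\big)_{(m+n-j-k)}^{\d+A}$ only the terms $j=0$ and $j=1$ can possibly survive, and only through the vacuum-type output of $a_{(1)}b$.

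Next I would evaluate each surviving term. For $j=1$: $\binom{m+X}{1} = m+X$, and plugging in $A_k$ gives the operator $(m+A_k)$ acting on $a = \psi^a_{-1}1 \in \C^2$ (the defining representation, or its dual — one must match conventions with \eqref{formula_convention}); then $((m+A_k)a)_{(1)}b = \big((m+A_k)a, e^b\big)\,1 = m(e^a,e^b)\,1 + (A_k e^a, e^b)\,1 = m(e^a,e^b)\,1 + C_k^{ab}\,1$. This output, sitting at twisted mode $(m+n-1-k)$ of the vacuum, contributes $\big(m(e^a,e^b) + C_k^{ab}\big)\delta_{m+n-1-k, \text{(mode of }1)}$; since $1_{(\ell)}^{\d+A}$ is nonzero (equal to the identity) only for the appropriate $\ell$ — by normalization $1_{(-1)}=\Id$ — this forces $m+n-1-k = -1$, i.e. $k = m+n$. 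For $j=0$: $\binom{m+X}{0}=1$, so the term is $(a_{(0)}b)_{(m+n-k)}^{\d+A} = 0$ since $a_{(0)}b=0$. Summing the $j=1$ contributions over $k$ with the constraint $k = m+n$ collapses the $k$-sum and yields $m(e^a,e^b)\delta_{m+n=0}$ from the $k=0$ bookkeeping plus $\sum_k C_k^{ab}\delta_{m+n=k}$ — wait, more carefully: writing it as $m(e^a,e^b)\,1_{(m+n-1-0)} + \sum_k C_k^{ab}\,1_{(m+n-1-k)}$ and using $1_{(-1)}=\Id$ gives precisely $m(e^a,e^b)\delta_{m+n,0} + \sum_k C_k^{ab}\delta_{m+n,k}$, which is the claimed relation.

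The main obstacle I anticipate is bookkeeping of conventions rather than genuine mathematical difficulty: one must pin down (i) the precise meaning of the twisted modes $a_{(m)}^{\d+A}$ and the normalization $1_{(\ell)}^{\d+A}$, which controls which $\delta$ appears; (ii) the sign/grading convention that turns the bracket into an anticommutator for the odd fields $\psi^a$; (iii) the exact action of $A_k \in \sl_2$ on the "field space" $\C^2$ spanned by $\psi^\pm_{-1}1$ — whether it is the defining or the dual representation — so that $(A_k e^a, e^b)$ matches the convention $C_k = \begin{psmallmatrix} 0 & -1 \\ 1 & 0\end{psmallmatrix}A_k$ fixed in \eqref{formula_convention}, including the symmetry of $C_k$; and (iv) checking that higher products $a_{(j)}b$ for $j \geq 2$ vanish (immediate, by conformal weight) so that the binomial $\binom{m+A_k}{j}$ for $j\geq 2$, despite being a nontrivial operator, is annihilated. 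Once these conventions are fixed consistently, the verification is a short direct computation, and I would present it as such.
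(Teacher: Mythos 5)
Your proposal is correct and follows essentially the same route as the paper's proof: observe that for symplectic fermions only the $j=1$ product survives (the paper phrases this via $\psi^c_j\psi^b_{-1}1 = j(e^c,e^b)\delta_{j-1,0}1$, you phrase it via conformal weight, which is equivalent), expand $\binom{m+A_k}{1}$, apply $a_{(1)}b=(e^a,e^b)1$, and use the normalization $1^{\d+A}_{(\ell)}=\delta_{\ell,-1}\Id$ to collapse the $k$-sum. The one point worth tightening is the step you flag with ``from the $k=0$ bookkeeping'': taken literally, the formula $\binom{m+A_k}{j}$ with the scalar $m$ appearing for every $k$ would yield $m(e^a,e^b)$ without the factor $\delta_{m+n,0}$ after collapsing the $k$-sum; the implicit convention (which the paper also uses silently) is that the derivative/scalar part $m$ only accompanies the $k=0$ term, i.e.\ one should read $\binom{m\delta_{k,0}+A_k}{j}$. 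You recognize and apply this correctly, but it would be cleaner to state the convention explicitly rather than resolve it inline.
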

\begin{proof}
In the right-hand side of formula \eqref{formula_twistedJacobi}, the only contributions for the vertex algebra $\SF$ are for $j=1$, since the action of polynomials in $A_k$ produce linear combinations of $\psi_{-1}^\pm1$ and for these elements the only nontrivial contributions in the $(j)$-product with $j\geq 0$ is $\psi^c_j\psi_{-1}^b1=j(e^c,e^b)\delta_{j-1=0}1$. The unit acts by $1_{m+n-j-k}^{\d+A}=\delta_{m+n-j-k,-1}$  Using this, we find as asserted 
\begin{align*}
\sum_{j\geq 0}\sum_{k\in\Z} \big(({m+A_k\choose j} a)_{(j)} b\big)_{(m+n-j-k)}^{\d+A}
&=m(e^a,e^b)\delta_{m+n,0}
+\sum_{k\in\Z}(A_ke^a,e^b)\delta_{m+n,k}\\
&=\{\psi_m^a,\psi_n^b\} \qedhere
\end{align*}
\end{proof}

\subsection{Global definition}\label{sec_defGlobal}

Let $\Sigma$ be a Riemann surface and $E\to \Sigma$ a symplectic vector bundle of rank $2$. For example, $E$ can be the trivial bundle $\Sigma\times \C^2$ with standard symplectic form on $\C^2$ with basis $e^+,e^-$ and the standard action of $\mathrm{Sp}_2=\SL_2$.
Let $\connection$ be an $\SL_2$-connection on $\Sigma$.

\begin{definition}
Define the Clifford algebra $\SF_{\connection}(E)$ as a central extension of the algebra of sections $f:\Sigma\to E$ by the anticommutator  
$$\{f,g\}=\Res( \connection f, g)=-\Res( f,\connection g)$$
\end{definition}

We want to show how this locally reproduces Definition \ref{def_deformedModeAlgebra}: Let $\Sigma=\C^\times$ and $E=\Sigma\times {\C^2}$ the trivial symplectic bundle, where ${\C^2}$ with standard basis $e_1,e_2$. Introduce basis elements $\psi_n^\pm=e^\pm z^{n}$ in the space of sections. Consider the connection $\connection=\d+A$ with 
$A=\sum_{k\in\Z} A_kz^{-1-n}$.
Then the Clifford algebra in this basis reads
\begin{align*}
\{\psi^a_m,\psi^b_n\}
&=\Res(e^a\, m z^{m-1}+\sum_{k\in\Z}A_ke^a\,z^{m-1-k}, \,e^b \, z^{n})\\
&=(e^a,e^b)m\delta_{m+n=0}+\sum_{k\in\Z}(A_ke^a,e^b)\delta_{m+n=k}
\end{align*}

%In particular, if we consider the case $\sl_2$ and again use matrices with rows and columns indexed by $\{+,-\}$ this matches our convention in formula \eqref{formula_convention} for identifying $C$ and $A$
%\begin{align*}
%\{\psi^a_m,\psi^b_n\}
%&=\begin{pmatrix} 0 & m \\ -m & 0\end{pmatrix}\delta_{m+n,0}
%+\sum_{k\in\Z}\begin{pmatrix} -A_k^{-+} & -A_k^{--} \\ A_k^{++} & A_k^{+-}\end{pmatrix}\delta_{m+n,k} \\
%&=\begin{pmatrix} 0 & m \\ -m & 0\end{pmatrix}\delta_{m+n,0}
%+\sum_{k\in\Z}\begin{pmatrix} C_k^{++} & C_k^{-+} \\ C_k^{+-} & %C_k^{--}\end{pmatrix}\delta_{m+n,k} 
%\end{align*}

\subsection{Gauge transformations}

Let $F\in \SL_2((z))$. As $\SL_2=\mathrm{Sp}_2$ acts on $\C^2$, preserving the standard symplectic form, the group $\SL_2((z))$ acts on series resp. functions $\C^\times\to {\C^2}$. As expected we find

\begin{lemma}\label{lm_gaugeModules}
The action of $F^{-1}$ gives an a (formal) isomorphism of Clifford algebras
$$\SF_{\d+A}\to \SF_{F[\d+A]}$$
where the connections transforms in the usual way, see e.g. \cite{Boalch02}
$$F[\d+A]=\d-F^{-1}(\d F)+FAF^{-1}$$
Accordingly, precomposition with $F$ gives a functor
$$\cC_{\d+A}\to \cC_{F[\d+A]}$$
\end{lemma}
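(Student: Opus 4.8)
The plan is to track how a gauge transformation $F\in\SL_2((z))$ acts on the global picture of Section \ref{sec_defGlobal}, where the Clifford algebra is most transparent, and then read off the local consequence. Write $\SF_{\d+A}=\SF_{\connection}(E)$ for the connection $\connection=\d+A$ on $\Sigma=\C^\times$ with the trivial symplectic bundle $E=\Sigma\times\C^2$. Since $F(z)$ acts pointwise on the fibre $\C^2$ preserving the symplectic form $(-,-)$, it gives a linear automorphism $f\mapsto Ff$ of the space of $\C^2$-valued series. The key computation is that this map intertwines the two Clifford brackets up to replacing $\connection$ by $F[\connection]$: for sections $f,g$ one has
\begin{align*}
\Res\big((F[\d+A])(Ff),\,Fg\big)
&=\Res\big(F(\d+A)f,\,Fg\big)
=\Res\big((\d+A)f,\,g\big),
\end{align*}
where the first equality is the identity $(F[\d+A])\circ F=F\circ(\d+A)$ (the defining property of how connections transform, $F[\d+A]=\d-(\d F)F^{-1}+FAF^{-1}$, rearranged to $(\d+A)f = F^{-1}\,(F[\d+A])(Ff)$), and the second uses $\SL_2$-invariance of the symplectic pairing. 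Thus $f\mapsto Ff$ is an isomorphism of Clifford algebras $\SF_{\d+A}\xrightarrow{\sim}\SF_{F[\d+A]}$; its inverse is the action of $F^{-1}$, which is the form stated in the lemma.

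In the local basis $\psi^a_n=e^a z^n$ this says: expanding $F^{-1}$ as a series in $\SL_2((z))$, the assignment $\psi^a_n\mapsto$ (the image of $e^a z^n$ under $F^{-1}$, re-expanded in the $\psi^b_m$) is a well-defined algebra map because $F^{-1}\in\SL_2((z))$ has only finitely many negative powers, so each generator goes to a locally finite linear combination; it respects the relations of Definition \ref{def_deformedModeAlgebra} precisely by the residue computation above, now read with the Clifford parameters $C_k=\begin{psmallmatrix}0&-1\\1&0\end{psmallmatrix}A_k$ on the source and the correspondingly transformed parameters on the target. I would spell this out only to the extent of noting that the regular-singular piece $A_0$, the regular pieces and the irregular pieces all get mixed by a general $F$, which is exactly why the statement is at the level of an isomorphism of algebras rather than of graded algebras.

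For the functor statement, given a $(\d+A)$-twisted module, i.e.\ an $\SF_{\d+A}$-module $\cM$ in the category $\cC_{\d+A}$, one precomposes the action with the algebra isomorphism $\SF_{F[\d+A]}\xrightarrow{\sim}\SF_{\d+A}$ (the inverse of the map above, i.e.\ the action of $F$) to obtain an $\SF_{F[\d+A]}$-module structure on the same underlying vector space; morphisms carry over verbatim, and functoriality in $\cM$ is immediate since we have only changed the action along a fixed ring map. The only point requiring a word is that this precomposed module again lies in the prescribed subcategory $\cC_{F[\d+A]}$: this is where one uses that $F\in\SL_2((z))$ (as opposed to a more singular gauge transformation) preserves the choice of triangular decomposition and the centralizing subalgebra $\SF^{\geq 2}$ that defines $\cC$, as already remarked in the introduction; I would invoke this structural observation rather than reprove it here.

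The main obstacle is bookkeeping rather than conceptual: one must check that the formal series manipulations are legitimate, namely that composing the pointwise $\SL_2((z))$-action with re-expansion in the mode basis does not produce infinite sums in a fixed mode $\psi^b_m$. This is guaranteed because $F,F^{-1}\in\SL_2((z))$ are Laurent series (finitely many negative powers) and each $\psi^a_n$ involves a single power $z^n$, so all resulting sums are locally finite; the analogous care is needed to see that $F[\d+A]$ is again of the form $\d+\sum A'_k z^{-k-1}$ with $A'_k\in\sl_2$, which follows since $\SL_2((z))$ preserves $\sl_2$-valued connections. Everything else is the one-line residue identity above.
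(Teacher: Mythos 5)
Your proof is correct and takes essentially the same route as the paper: the one-line residue computation $\Res(F[\d+A](Ff),Fg)=\Res(F(\d+A)f,Fg)=\Res((\d+A)f,g)$ in the global picture of Section~\ref{sec_defGlobal}, using $\mathrm{Sp}_2$-invariance of the pairing and $F(\d+A)F^{-1}=F[\d+A]$. The paper's proof is just this identity (stated with the substitution $f\mapsto F^{-1}f$, $g\mapsto F^{-1}g$); your additional remarks on local finiteness of the re-expansion and on preservation of the subcategory $\cC$ are sensible and address points the paper leaves implicit.
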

\begin{proof}
This is a standard computation using that $F$ preserves the symplectic form
\begin{align*}
\{F^{-1}f,F^{-1}g\}&=\Res((\d+A) F^{-1}f,F^{-1}g) \\
&=\Res(F(\d+A) F^{-1} f,g) \\
&=\Res(F(\d+A) F^{-1} f,g) \\
&=\Res(F[(\d+A)] f,g) \qedhere
\end{align*}
\end{proof}

Note that we later consider modules annihilated by sufficitly high powers of $\psi^+$, then the action of $F\in \SL_2((z))$ produces only finitely many nonzero term when acting on an element of such a module.

\begin{example}\label{exm_gaugeBorel}
Consider the gauge transformation
$$F(z)=
\begin{pmatrix}
1 & z^N \\ 0 & 1   
\end{pmatrix}$$
Acting with $F^{-1}$ gives
\begin{align*}
\psi_n^+ &\longmapsto \psi_n^+ \\
\psi_n^- & \longmapsto \psi_n^- - \psi_{n+N}^+
\end{align*}
and, as an example, the following connection $\d+A$ is  transformed to $F[\d+A]$ as follows
\begin{align*}
&\d+\begin{pmatrix}
\xi & 0 \\ 0 & -\xi  
\end{pmatrix}/z^2
+\begin{pmatrix}
\varepsilon & 0 \\ 0 & -\varepsilon  
\end{pmatrix}/z\\
\longmapsto \;\;
&\d
+
\begin{pmatrix}
\xi & -2\xi z^N \\ 0 & -\xi   
\end{pmatrix}/z^2
+
\begin{pmatrix}
\varepsilon & -2\varepsilon z^N \\ 0 & -\varepsilon   
\end{pmatrix}/z
-
\begin{pmatrix}
0 & Nz^{N-1} \\ 0 & 0   
\end{pmatrix}
\end{align*} 
\end{example}

It is also instructive in what follows to allow more general gauge transformation with singularities in $z=0$:

\begin{example}\label{exm_gaugeSpectralflow}
Consider the singular gauge transformation
$$F(z)=
\begin{pmatrix}
z & 0 \\ 0 & z^{-1}   
\end{pmatrix}$$
Acting with $F^{-1}$ maps
\begin{align*}
\psi_n^+ &\longmapsto \psi_{n-1}^+\\
\psi_n^- & \longmapsto \psi_{n+1}^- 
\end{align*}
and, as an example, the following connection $\d+A$ is  transformed to $F[\d+A]$ as follows 
\begin{align*}
&\d+\begin{pmatrix}
\xi & 0 \\ 0 & -\xi  
\end{pmatrix}/z^2
+\begin{pmatrix}
\varepsilon & 0 \\ 0 & -\varepsilon  
\end{pmatrix}/z\\
\longmapsto \;
&\d+
\begin{pmatrix}
\xi & 0 \\ 0 & -\xi   
\end{pmatrix}/z^2
+
\begin{pmatrix}
\varepsilon & 0 \\ 0 & -\varepsilon
\end{pmatrix}/z
-
\begin{pmatrix}
1& 0 \\ 0 & -1   
\end{pmatrix}/z^{-1}
\end{align*}
Hence this causes a shift of $\varepsilon$ by $-1$. In particular, it changes $A_0$ but not the exponential $e^{2\pi\i A_0}$.  We would interpret this as  \emph{spectral flow}.
\end{example}

\section{Structure of twisted representations}\label{sec_twistedRepresentations}

The representation theory of the Clifford algebra $\SF_{C}$ in Definition \ref{def_deformedModeAlgebra} can in principle be read off the defining quadratic form, however, we are in an infinitely-dimensional case.  We now take a direct approach using a fixed triangular decomposition. 

\subsection{Irregular of Poisson order 1}

 If $\d+A$ has an irregular singularity of Poisson order $1$ with diagonalizable irregular term, as in Example \ref{ex_PoissonOrderOne}, then the elements $\psi^\pm_n$ for $n\geq 2$ generate a commutative subalgebra $\SF_{\d+A}^{\geq 2}$, and the pairing of this subalgebra with $\SF_{\d+A}^{\leq -1}$ is nondegenerate. Note that $\SF_{\d+A}^{\geq 1}$ is a larger commutative subalgebra. Of course, analogous statements holds for irregular singularities of higher order. 
%, 
Let $V$ be a representation of $\SF_{\d+A}$ and let $v$ be a vector annihilated by $\SF_{\d+A}^{\geq 1}$, i.e. $\psi_n^\pm v=0$ for $n>1$. As usual, there are universal representations generated by such vectors:

\begin{definition}
    Let $V^{01}$ be a representation of the algebra $\SF_{C}^{01}$ generated by $\psi_0^\pm,\psi_1^\pm$ and whose relations are given in Example \ref{ex_PoissonOrderOne}. Then $V^{01}$ extends trivially to a representation of $\SF_{\d+A}^{\geq 0}$, because $\SF_{\d+A}^{\geq 2}$ commutes with $\SF_{\d+A}^{01}$. Inducing up produces a representation of $\SF_{\d+A}$
    $$\Ind(V^{01})=\SF_{\d+A}\otimes_{\SF_{\d+A}^{\geq 0}} V^{01}$$
\end{definition}

The following behavior is typical for Clifford algebras with triangular decomposition (say, the Heisenberg Lie algebra) and considerably easier than the behavior of, say, finite semisimple Lie algebras:

\begin{lemma}
Let $v$ be a vector in $\Ind(V^{01})$ annihilated by $\SF_{\d+A}^{\geq 2}$, then $v\in V^{01}$. 
\end{lemma}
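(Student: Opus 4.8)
The plan is to use the PBW-type structure of $\SF_{\d+A}$ as a Clifford algebra with triangular decomposition. By the induction construction, as a vector space we have $\Ind(V^{01}) \cong \SF_{\d+A}^{\leq -1} \otimes V^{01}$, where $\SF_{\d+A}^{\leq -1}$ is the subalgebra generated by $\psi^\pm_n$ for $n \leq -1$. Since the anticommutators among the $\psi^\pm_n$ with $n \leq -1$ are scalars supported on $m+n \leq -2$, but the relevant structural point is that $\{\psi^a_m, \psi^b_n\} = m(e^a,e^b)\delta_{m+n=0} + \sum_k C_k^{ab}\delta_{m+n=k}$ with $C_k = 0$ for $k > 1$; for $m, n \leq -1$ we have $m+n \leq -2 < 0$, so the anticommutators are genuinely scalar (no positive modes appear), giving $\SF_{\d+A}^{\leq -1}$ a clean monomial basis $\psi^{a_1}_{n_1}\cdots \psi^{a_r}_{n_r}$ with a fixed ordering convention. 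So I would first fix such a PBW basis and write a general element of $\Ind(V^{01})$ uniquely as $\sum_\alpha m_\alpha \otimes v_\alpha$ with $m_\alpha$ distinct ordered monomials in $\SF_{\d+A}^{\leq -1}$ and $v_\alpha \in V^{01}$.

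Next I would introduce the vertical grading $\deg_{vert}(\psi^\pm_{-i}) = i$ on $\SF_{\d+A}^{\leq -1}$ (so $\deg_{vert} \geq 1$ on each generator, hence $\deg_{vert} \geq r$ on a length-$r$ monomial, with $V^{01}$ in degree $0$), and argue by induction on the top vertical degree appearing in the expansion of $v$. The key computational input is the commutator of a positive mode $\psi^c_k$ ($k \geq 2$) with a negative-mode monomial: moving $\psi^c_k$ past each $\psi^{a_j}_{n_j}$ produces an anticommutator term $\{\psi^c_k, \psi^{a_j}_{n_j}\} = k(e^c,e^{a_j})\delta_{k+n_j=0} + C_{k+n_j}^{c a_j}$. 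Since $k \geq 2$, the scalar $C_{k+n_j}^{\cdots}$ vanishes unless $k + n_j \leq 1$, i.e. $n_j \leq 1 - k \leq -1$, which is always the case here; but more importantly $\{\psi^c_k, \psi^{a_j}_{n_j}\}$ is a scalar and its contribution deletes the factor $\psi^{a_j}_{n_j}$ (lowering length by $1$ and vertical degree by $|n_j| = -n_j \geq 1$), while the leftover term $\psi^c_k$ sitting to the right either hits $V^{01}$ — where, by the trivial extension, $\psi^c_k$ with $k \geq 2$ acts as zero — or hits a remaining negative-mode factor and the process repeats. Tracking this carefully shows that $\psi^c_k v$, computed on the top-degree part, equals (up to lower-degree corrections) a sum over ways of contracting $\psi^c_k$ against one negative-mode letter of each monomial.

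Then I would extract a contradiction from $\psi^c_k v = 0$ for all $c \in \{+,-\}$, $k \geq 2$: pick a monomial $m_\alpha$ of maximal vertical degree appearing in $v$ with $v_\alpha \neq 0$, and among those pick one; if $m_\alpha$ is nontrivial it contains some letter $\psi^{a}_{n}$ with $n \leq -1$, say with $-n = j \geq 1$; then choosing $k$ and $c$ so that $\{\psi^c_k, \psi^a_n\} = k(e^c,e^a)\delta_{k+n=0}$ is a nonzero scalar (take $k = -n = j$, and $c$ opposite to $a$ if $j \geq 2$; the constraint $k\geq 2$ forces a short separate discussion of the letters with $n = -1$) contributes a nonzero top-degree monomial to $\psi^c_k v$ which, by the maximality/genericity of the choice and linear independence of PBW monomials tensored with a basis of $V^{01}$, cannot be cancelled — contradiction. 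Hence the top degree of $v$ must be $0$, i.e. $v \in V^{01}$.

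The main obstacle I anticipate is the bookkeeping around the lowest modes $\psi^\pm_{-1}$: a positive mode $\psi^c_k$ with $k \geq 2$ can never contract against $\psi^a_{-1}$ via the $m\delta_{m+n=0}$ term (since $k - 1 \neq 0$), only via a $C^{ca}_{k-1}$ term, and $C_j = 0$ for $j \geq 2$, so in fact $\psi^\pm_{k}$ for $k\geq 2$ commutes with $\psi^\pm_{-1}$ entirely — meaning the annihilation condition "$\SF^{\geq 2}_{\d+A}$ kills $v$" gives no direct control over $\psi^\pm_{-1}$-content. The resolution is to also use $\psi^\pm_1 \in \SF^{01}_{\d+A}$: although $\psi^\pm_1 v$ need not vanish, one can still run the degree argument using that $\{\psi^c_1, \psi^a_{-1}\} = (e^c,e^a)\delta_{\cdots} + C_0^{ca}$ lowers vertical degree, combined with the hypothesis only to conclude the statement as written (which concerns $\SF^{\geq 2}$); alternatively, and more cleanly, restrict attention to the quotient grading and note that once all letters with $n \leq -2$ are removed (which the $\SF^{\geq 2}$-action does control), the residual $\psi^\pm_{-1}$-monomials together with $V^{01}$ already form the induced module from $\SF^{0,-1}$ over a finite-dimensional Clifford piece, so "annihilated by $\SF^{\geq 2}$" pins $v$ down to this finite part — and the Lemma's statement $v \in V^{01}$ presumably should be read in that refined sense, or the argument shows directly that no $\psi_{-1}$ letters can survive either because they would be detected after one more application. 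I would make this precise by organizing the whole induction on $\deg_{vert}$ restricted to letters of mode $\leq -2$ first, then handling mode $-1$ by the same contraction mechanism using that $\SF^{\geq 1}_{\d+A} \supset \SF^{\geq 2}_{\d+A}$ is the commutative subalgebra whose pairing with $\SF^{\leq -1}_{\d+A}$ is nondegenerate, as already noted in the text.
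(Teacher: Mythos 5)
Your overall strategy (induction on the top vertical degree, contracting positive modes against the leading term) is the right one and is the same as the paper's. However, you misidentify the mechanism of the contraction, and this derails the final third of the argument. You only extract a nonzero scalar from the untwisted part $m(e^c,e^a)\delta_{m+n=0}$ of the anticommutator, which for a positive mode $\psi^c_k$ with $k\geq 2$ can only delete letters $\psi^a_n$ with $n\leq -2$; and then you declare as a ``main obstacle'' that $\psi^\pm_k$ for $k\geq 2$ commutes with $\psi^\mp_{-1}$ entirely. That claim is false for $k=2$: from Example \ref{ex_PoissonOrderOne}, $C_1=\begin{psmallmatrix}0&\xi\\\xi&0\end{psmallmatrix}$, so $\{\psi^+_2,\psi^-_{-1}\}=C_1^{+-}=\xi\neq 0$ (and symmetrically $\{\psi^-_2,\psi^+_{-1}\}=\xi$). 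You wrote ``$C_j=0$ for $j\geq 2$'' and then implicitly applied it at $j=k-1$ for $k=2$, where it does not hold. The irregular coefficient $\xi$ is precisely the hypothesis that makes the lemma true, and it is the twisted $C$-term, not the untwisted one, that carries the leading contraction. Indeed, more uniformly, $\{\psi_n^\pm,\psi_{-(n-1)}^\mp\}=C_1^{\pm\mp}=\xi$ for every $n\geq 2$, so on the top-degree component each $\psi_n^\pm$ ($n\geq 2$) acts as the antiderivation $\xi\,\partial/\partial\psi^\mp_{-(n-1)}$ (the untwisted term contributes one degree lower). As $n$ ranges over $n\geq 2$, the target index $-(n-1)$ ranges over all $\leq -1$, so annihilation by $\SF_{\d+A}^{\geq 2}$ forces the top-degree component to be killed by every $\partial/\partial\psi^\pm_{-m}$, $m\geq 1$, hence to be a constant; thus $v\in V^{01}$.

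Because your ``obstacle'' does not exist, your proposed ``resolution'' is both unnecessary and invalid as stated: it appeals to $\psi^\pm_1$, which lies in $\SF^{01}_{\d+A}$ and not in $\SF^{\geq 2}_{\d+A}$, so the hypothesis gives you no information about its action; and you then hedge about whether the lemma's conclusion should be weakened (``presumably should be read in that refined sense''), which is a sign the argument has not closed. Once you correct the $C_1$-contraction, the rest of your setup (PBW basis, top-degree induction, linear independence) is fine and recovers the paper's short proof.
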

\begin{proof}
If we define the degree of $\psi_{-m}^\pm$ to be $m$, then $\Ind(V^{01})$ is non-negatively graded and the homogeneous component of degree $0$ is $V^{01}$.

Explicitly, $\psi_n^\pm$ for $n\geq 2$ acts on products of $\psi_m^\pm$ for $m\leq -1$ by an derivation $\smash{\xi\frac{\partial}{\partial \psi_{-n+1}}}$ of degree $-n+1$ plus operations of higher degree - for example in degree $-n$ the term $\smash{(\pm m)\frac{\partial}{\partial \psi_{-n}^\mp}}$ from the familiar untwisted case. Hence, if $v$ is annihilated by $\smash{\SF_{\d+A}^{\geq 1}}$, then the homogeneous component of largest degree has to be annihilated by  $\smash{\SF_{\d+A}^{\geq 1}}$ and already annihilated by the anti-derivations above. Now, the only polynomial annihilated by all derivations is the zero polynomial. Hence $v$ is in degree $0$ as asserted. 
\end{proof}

Since any morphism from an induced module is uniquely determined by the image of $V^{01}$, which has to be annihilated by $\SF_{\d+A}^{\geq 2}$, we conclude that all morphisms between induced modules 
$$f:\; \Ind(V^{01})\longrightarrow \Ind(W^{01})$$
are induced from a respective morphism $f^{01}:V^{01}\to W^{01}$. Spoken categorically

\begin{corollary}\label{cor_fullyfaithful}
The induction functor 
$$\Ind:\;\Rep(\SF_{\d+A}^{01}) \longrightarrow \Rep(\SF_{\d+A})$$
is fully faithful. 
Hence the full subcategory $\cC$ of representations in the image of $\Ind$ is equivalent to the category of representations of $\SF_{\d+A}^{01}$. In particular, the simple objects in $\cC$ are precisely the induced module $\Ind(V^{01})$ for $V^{01}$ a simple $\SF_{\d+A}^{01}$-module. 
\end{corollary}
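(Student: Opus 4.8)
The plan is to deduce Corollary~\ref{cor_fullyfaithful} formally from the preceding lemma (every morphism out of an induced module is determined by, and comes from, a morphism on the degree-zero part) together with the fact that $\Ind$ is an exact functor with left adjoint-type behaviour. First I would make the full-faithfulness explicit: given $V^{01},W^{01}$ in $\Rep(\SF_{\d+A}^{01})$ and a morphism $f:\Ind(V^{01})\to\Ind(W^{01})$ of $\SF_{\d+A}$-modules, I restrict $f$ to the subspace $V^{01}\subset\Ind(V^{01})$ (the degree-$0$ component). Since $V^{01}$ is annihilated by $\SF_{\d+A}^{\geq 2}$ (indeed by all of $\SF_{\d+A}^{\geq 1}$ except the $\SF^{01}$-generators, but the point is $\SF^{\geq 2}$) and $f$ is $\SF_{\d+A}$-linear, the image $f(V^{01})$ is also annihilated by $\SF_{\d+A}^{\geq 2}$; by the lemma just proved, $f(V^{01})\subseteq W^{01}$. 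Thus $f$ restricts to an $\SF_{\d+A}^{01}$-module map $f^{01}:V^{01}\to W^{01}$, and since $\Ind(V^{01})$ is generated over $\SF_{\d+A}$ by $V^{01}$, the map $f$ is the unique $\SF_{\d+A}$-linear extension of $f^{01}$, i.e. $f=\Ind(f^{01})$. This shows $\Hom(\Ind V^{01},\Ind W^{01})\to\Hom(V^{01},W^{01})$, $f\mapsto f^{01}$, is a bijection with inverse $\Ind$, so $\Ind$ is fully faithful. The essential-image statement is then the definition of $\cC$, and $\cC\simeq\Rep(\SF_{\d+A}^{01})$ is the standard consequence that a fully faithful functor is an equivalence onto its essential image.

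Next I would address the claim about simple objects. Since $\Ind$ is an equivalence $\Rep(\SF_{\d+A}^{01})\xrightarrow{\sim}\cC$, it preserves and reflects all categorical notions, in particular simplicity: $\Ind(V^{01})$ is simple \emph{in $\cC$} iff $V^{01}$ is simple. The one genuine point to check is that simplicity in $\cC$ coincides with simplicity in $\Rep(\SF_{\d+A})$ for objects of $\cC$ — a priori a subobject of $\Ind(V^{01})$ in the ambient category $\Rep(\SF_{\d+A})$ need not lie in $\cC$. I would argue this using the $\Z$-grading by $\deg(\psi_{-m}^\pm)=m$ introduced in the lemma: any $\SF_{\d+A}$-submodule $0\neq U\subseteq\Ind(V^{01})$, being generated by the action of $\SF_{\d+A}$ on the vectors it contains, contains a nonzero vector killed by $\SF_{\d+A}^{\geq 2}$ (take a nonzero element of minimal degree in $U$, or more carefully apply sufficiently many lowering-degree anti-derivations $\xi\,\partial/\partial\psi_{-n+1}$ as in the lemma's proof until one lands in degree $0$ while staying in $U$; nonvanishing is guaranteed because $\xi\neq 0$ and no nonzero polynomial is annihilated by all these derivations), hence by the lemma $U\cap V^{01}\neq 0$; then the $\SF_{\d+A}$-submodule generated by $U\cap V^{01}$ is $\Ind(U\cap V^{01})$, which lies in $U$, so if $V^{01}$ is simple then $U\cap V^{01}=V^{01}$ and $U=\Ind(V^{01})$. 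Conversely a proper nonzero $\SF_{\d+A}^{01}$-submodule of $V^{01}$ induces a proper nonzero submodule of $\Ind(V^{01})$.

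I expect the main obstacle to be exactly this last point — controlling submodules of $\Ind(V^{01})$ taken in the \emph{full} category $\Rep(\SF_{\d+A})$ rather than in $\cC$ — since everything else is a formal unwinding of adjunction-style bookkeeping once the preceding lemma is in hand. The key technical input is the grading/filtration argument showing that every nonzero submodule meets $V^{01}$ nontrivially, which reuses the explicit description of the action of $\psi_n^\pm$, $n\geq 2$, by the leading anti-derivation $\xi\,\partial/\partial\psi_{-n+1}^{\mp}$ plus strictly-higher-degree corrections, and crucially the nondegeneracy hypothesis $\xi\neq 0$ (i.e. genuine irregularity of Poisson order $1$) so that these lowering operators do not all vanish. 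Given that, the statement about simple objects — that they are precisely the $\Ind(V^{01})$ with $V^{01}$ simple over $\SF_{\d+A}^{01}$ — follows immediately, and combined with the earlier remark that $\SF_{\d+A}^{01}$ is itself simple in the irregular case, one recovers that $\cC$ has a unique simple object $\SF_{\d+A}^{\leq 0}=\Ind(\SF_{\d+A}^{01})$.
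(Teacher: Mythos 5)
Your proposal is correct and follows the same route as the paper: full faithfulness follows from the preceding lemma by restricting a morphism to the degree-$0$ part $V^{01}$ and using the characterization of $\SF_{\d+A}^{\geq 2}$-annihilated vectors. The one place where you go beyond what the published text spells out is the simplicity claim. The paper states it as an immediate consequence of the equivalence $\Rep(\SF_{\d+A}^{01})\simeq\cC$, which only controls subobjects \emph{within} $\cC$; you correctly observe that to conclude $\Ind(V^{01})$ is simple in the ambient $\Rep(\SF_{\d+A})$ (as is tacitly used in the subsequent Corollary~\ref{cor_VNullEins}, which speaks of ``a unique simple module $\Ind(V^{01})$ of $\SF_{\d+A}$'') one needs the additional argument that every nonzero $\SF_{\d+A}$-submodule of $\Ind(V^{01})$ meets $V^{01}$ nontrivially. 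Your argument for that point—applying the leading anti-derivations $\xi\,\partial/\partial\psi_{-n+1}^\mp$ to a nonzero element of top degree and descending until one lands in degree zero—is exactly the right use of the lemma's proof, and it is where the hypothesis $\xi\neq 0$ enters essentially. One small caution: since the twisted $\SF_{\d+A}$-action does not preserve the $\Z$-grading, a submodule need not be graded, so ``element of minimal degree'' should be read as ``top homogeneous component of an arbitrary nonzero element,'' as you in fact do in the more careful parenthetical; with that reading the descent terminates because the grading on $\Ind(V^{01})$ is bounded below by $0$.
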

It should not be hard to provide an intrinsic characterization of this category and clarify extensions, analogously to \cite{OW13}.

\begin{comment}
\begin{lemma}
    In this case, $\Ind(V^{01})$ is an irreducible representation of $\SF_{\d+A}$ iff $V^{01}$ is an irreducible representation of $\SF_{\d+A}^{01}$. 
\end{lemma}
\begin{proof}
    Clearly if $V^{01}$ is reducible then $\Ind(V^{01})$ is reducible, so we now prove the other direction:
    Suppose we have a submodules $W\subset \Ind(V^{01})$. Let $w\in W$ be a vector annihilated by $\SF_{\d+A}^{\geq 1}$. Let $n<0$ be the smallest (most negative) value for which either $\psi_n^\pm$ appears in a monomial $\psi_{n}^\pm \cdots v$ with nonzero coefficient in $w$, so $w=X+\psi_{n}^+ Y+\psi_n^- Z$ for $X,Y,Z\in \Ind(V^{01})$ not containing $\psi_m^\pm$ for $m\leq n$. 
    
    Now, the action of $\psi^\pm_{-n+1}$ on $X,Y,Z$ is zero (because there are no irregular terms of higher order) and it removes $\psi_n^\pm$ (because the irregular term of orger $1$ is assumed nonzero), so we conclude that $X,Y,Z$ are also in the submodule $W$. Inductively, we find a vector $v\in W\cap V^{01}$.  
\end{proof}
\end{comment}

We hence discuss the representation category of $\SF_{\d+A}^{01}$: It is according to Example \ref{ex_PoissonOrderOne} the finite-dimensional Clifford algebra associated to the matrix

\[
  \left(\begin{array}{@{}cc|cc@{}}
    0 & 0 &       0 & \xi \\
    0 & 0 &       \xi & 0 \\ \hline
    0 & \xi &   \tau^+ & \varepsilon \\
    \xi & 0 &   \varepsilon & \tau^-
  \end{array}\right)
\]

This matrix is nondegenerate under our assumption $\xi\neq 0$, and a maximal isotropic subspace is spanned by $\psi_1^\pm$, regardless of the values $\tau^\pm, \varepsilon$.  

\begin{corollary}\label{cor_VNullEins}
Assume that $\d+A$ has an irregular singularity of order $1$ with diagonalizable irregular term, as in Example \ref{ex_PoissonOrderOne}. Then $\SF_{\d+A}^{01}$ is a simple algebra of dimension $2^4$. The unique simple module of dimension $2^2$ is $V^{01}=\SF_{\d+A}^{0}$, with $\psi_0^\pm$ acting by left-multiplication and $\psi_1^\pm$ acting by \smash{$\xi \frac{\partial}{\partial \psi_0^\mp}$}. Correspondingly, there is a unique simple module $\Ind(V^{01})$ of $\SF_{\d+A}$. As a vector space and $\SF_{\d+A}^{\leq 0}$-module we have
$$\Ind(V^{01})=\SF_{\d+A}^{\leq 0}$$
\end{corollary}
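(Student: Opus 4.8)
The plan is to reduce the statement to (i) the classification of Clifford algebras of nondegenerate quadratic forms over $\C$, (ii) the PBW theorem for the Clifford algebra $\SF_{\d+A}$, and (iii) Corollary~\ref{cor_fullyfaithful}. As observed just above the corollary, for $\xi\neq0$ the algebra $\SF_{\d+A}^{01}$ is the Clifford algebra of a nondegenerate symmetric form on the $4$-dimensional space $\mathrm{span}(\psi_0^{\pm},\psi_1^{\pm})$ --- the displayed Gram matrix has block form $\bigl(\begin{smallmatrix}0&B\\B&D\end{smallmatrix}\bigr)$ with $B=\bigl(\begin{smallmatrix}0&\xi\\\xi&0\end{smallmatrix}\bigr)$, hence determinant $(\det B)^2=\xi^4$. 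By the structure theory of Clifford algebras of even-dimensional nondegenerate quadratic spaces over $\C$, $\SF_{\d+A}^{01}\cong M_{2^2}(\C)$; in particular it is simple, of dimension $2^4$, with a unique simple module up to isomorphism, of dimension $2^2$.

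Next I would identify this simple module with $\SF_{\d+A}^0$. Since $\{\psi_1^a,\psi_1^b\}=0$ for all $a,b$, the span $W$ of $\psi_1^{\pm}$ is isotropic, and (as noted above) maximal isotropic. Form the left ideal $\mathcal J=\SF_{\d+A}^{01}\psi_1^++\SF_{\d+A}^{01}\psi_1^-$ and the quotient module $S=\SF_{\d+A}^{01}/\mathcal J$. Ordering the PBW generators so that $\psi_0^{\pm}$ precede $\psi_1^{\pm}$, one sees that $\mathcal J$ is spanned by exactly those PBW monomials that contain a factor $\psi_1^+$ or $\psi_1^-$; hence $\dim S=2^2$ and the composite $\SF_{\d+A}^0\hookrightarrow\SF_{\d+A}^{01}\twoheadrightarrow S$ is a linear isomorphism. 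Transporting the $\SF_{\d+A}^{01}$-action along it: $\psi_0^{\pm}$ act by left multiplication on $\SF_{\d+A}^0$ (which is a subalgebra), and for each of the four basis monomials $x\in\{1,\psi_0^+,\psi_0^-,\psi_0^+\psi_0^-\}$, moving $\psi_1^{\pm}$ to the right past the $\psi_0$-factors and using $\{\psi_1^{\pm},\psi_0^{\mp}\}=\xi$ and $\{\psi_1^{\pm},\psi_0^{\pm}\}=0$ gives $\psi_1^{\pm}\cdot[x]=[\psi_1^{\pm}x]=\xi\,\partial/\partial\psi_0^{\mp}(x)$. Being $2^2$-dimensional over the simple algebra $\SF_{\d+A}^{01}\cong M_4(\C)$, this module is the unique simple one, so $V^{01}=\SF_{\d+A}^0$ with the stated action.

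It remains to pass to $\SF_{\d+A}$. By Corollary~\ref{cor_fullyfaithful}, $\cC\simeq\Rep(\SF_{\d+A}^{01})$ and its simple objects are the $\Ind(V^{01})$ for $V^{01}$ simple; uniqueness of $V^{01}$ yields a unique simple object $\Ind(V^{01})$. Finally, $\SF_{\d+A}^{\leq-1}$ is a subalgebra (all $\{\psi_m^a,\psi_n^b\}$ with $m,n\leq-1$ are scalars), and PBW gives $\SF_{\d+A}\cong\SF_{\d+A}^{\leq-1}\otimes_{\C}\SF_{\d+A}^{\geq0}$, so $\SF_{\d+A}$ is free as a right $\SF_{\d+A}^{\geq0}$-module on the PBW monomials in $\{\psi_n^{\pm}:n\leq-1\}$; thus $\Ind(V^{01})=\SF_{\d+A}\otimes_{\SF_{\d+A}^{\geq0}}V^{01}$ has basis $\{m_-\otimes m_0\}$ with $m_-$ such a monomial and $m_0$ a PBW monomial in $\psi_0^{\pm}$. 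Writing a PBW monomial of $\SF_{\d+A}^{\leq0}$ as $m_-m_0$ accordingly, the $\SF_{\d+A}^{\leq0}$-linear map $\SF_{\d+A}^{\leq0}\to\Ind(V^{01})$, $x\mapsto x\otimes1$, carries $m_-m_0$ to $m_-\otimes m_0$, hence is a bijection, which is the asserted identification of vector spaces and $\SF_{\d+A}^{\leq0}$-modules.

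The argument is essentially an assembly of standard facts; the one point requiring care is the identification $V^{01}=\SF_{\d+A}^0$ in the second paragraph, since $\SF_{\d+A}^0$ is not literally an exterior algebra when $C_0\neq0$, so the operator $\xi\,\partial/\partial\psi_0^{\mp}$ must be read off from the intrinsic quotient $S=\SF_{\d+A}^{01}/\mathcal J$ rather than posited. The remaining work is PBW bookkeeping: checking that $\SF_{\d+A}^0\subset\SF_{\d+A}^{01}$ and $\SF_{\d+A}^{\leq-1}\subset\SF_{\d+A}^{\leq0}$ are genuine subalgebras with PBW bases compatible with the relevant factorizations, and that $\mathcal J$ has the stated monomial spanning set.
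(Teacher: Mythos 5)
Your argument is correct and follows essentially the same route as the paper, which states the corollary as an immediate consequence of the two preceding observations (nondegeneracy of the Gram matrix for $\xi\neq0$, and $\mathrm{span}(\psi_1^\pm)$ being maximal isotropic) together with Corollary~\ref{cor_fullyfaithful}. You have simply supplied the standard details the paper leaves implicit: the classification $\mathrm{Cl}(\C^4,\text{nondeg.})\cong M_4(\C)$, the realization of the unique simple module as $\SF_{\d+A}^{01}/\mathcal J$ with $\mathcal J$ the left ideal generated by the isotropic generators, the PBW bookkeeping identifying this quotient with $\SF_{\d+A}^{0}$ and deriving the $\xi\,\partial/\partial\psi_0^{\mp}$ action, and the PBW factorization $\SF_{\d+A}\cong\SF_{\d+A}^{\leq-1}\otimes\SF_{\d+A}^{\geq0}$ yielding $\Ind(V^{01})\cong\SF_{\d+A}^{\leq0}$. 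All of these checks are sound.
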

and $\psi^\pm_{n}$ for $n\geq 1$ act as anti-derivations, see formula \eqref{formula_positiveModesActing}.

\subsection{Regular singular}

As a comparison, for regular singular connections ($\xi=0$) we have a triangular decomposition with $\SF_{\d+A}^{\geq 1},\,\SF_{\d+A}^{0},\, \SF_{\d+A}^{\leq -1}$. The pairing is again nondegenerate,  regardless of the values of $\tau^\pm,\varepsilon$, and the respective version of Corollary \ref{cor_fullyfaithful} holds so the category $\cC$ is equivalent to the category of representations of $\SF_{\d+A}^{0}$. This is the finite-dimensional Clifford algebra associated to the matrix
\[
  \left(\begin{array}{@{}cc@{}}

   \tau^+ & \varepsilon \\
   \varepsilon & \tau^-
  \end{array}\right)
\]
The representation theory depends on this matrix being zero, being a Jordan block with eigenvalue zero, or being nondegenerate. This corresponds to the three types of twisted modules for symplectic fermions and the three types of fibres for the Kac-De-Concini quantum group discussed in \cite{FL24} Figure 1.

\subsection{Gauge transformations}

It is an important observation, that the gauge transformations in $\SL_2((z))$ by definition preserve $\SF_{\d+A}^{\geq k}$. In particular, the central subalgebra $\SF_{\d+A}^{\geq 2}$ is preserved, and the subalgebra $\SF_{\d+A}^{01}$ is preserved up to terms in $\SF_{\d+A}^{\geq 2}$. Hence, gauge transformations map the subcategory $\cC$ of induced modules to the respective subcategory $\cC$ for the transformed connection.

On the other hand, the singular gauge transformations we considered in general do not preserve the subcategory. For example, the spectral flow in Example \ref{exm_gaugeSpectralflow} maps the central subalgebra generated by $\psi_n^\pm,n\geq 2$ to the alternative subalgebra generated by $\psi_n^+,n\geq 1$ and $\psi_n^-, n\geq 3$.

\section{Virasoro action}\label{sec_Virasoro}

\subsection{Local definition}

The Sugawara-type construction has some subtleties, which are in principle present in the untwisted case (and lead to the central extension of the Virasoro algebra) but become more prominent in the twisted case: 

\begin{remark}
In principle we would like to define $L_n=\sum_{i+j=n} \psi^-_i\psi^+_j$, then the calculation below seems to prove $[L_m,L_n]=(m-n)L_{m+n}$. However, this expression causes infinities, for example on a highest weight vector $\psi^-_i\psi^+_{-i} v=(-i)v$ if $i>0$, already in the untwisted case. We will make a choice of normal ordering to resolve this problem, thereby effectively changing the summands of the naive choice $L_n$ by constants (that would sum to infinity). This causes in turn a central extension to appear. 
\end{remark}
\begin{definition}\label{def_normord}
    We define a \emph{normal ordering} as follows
   \begin{align*}
   \normord{\psi^a_i\psi_j^{b}}
   &=\begin{cases}
   \psi^a_i\psi_j^{b},\quad &i<j \\
   -\psi^{b}_j\psi_i^a,\quad &i>j \\
   \frac12\psi^a_i\psi_j^{b}-\frac12\psi^{b}_j\psi_i^a,\quad &i=j
   \end{cases}
   \end{align*}
   which satisfies $\normord{\psi^{b}_j\psi_i^{a}}=-\normord{\psi^a_i\psi_j^{b}}$.
\end{definition}
We introduce the notation
$$
\indic_{i\gtrsim j}:=\begin{cases}
   0,\quad &i<j \\
   \frac12,\quad &i=j \\
   1,\quad &i>j \\
   \end{cases}
   $$
\begin{align}\label{formula_normord}
\normord{\psi^a_i\psi_j^{b}}
&=\psi_i^a\psi_j^{b} \indic_{j\gtrsim i}
-\psi_j^{b} \psi_i^a \indic_{i\gtrsim j}\\
\psi^a_i\psi_j^{b}
   &=\normord{\psi^a_i\psi_j^{b}}
   +\{\psi^a_i,\psi_j^{b}\}\indic_{i\gtrsim j}
\end{align}

\begin{lemma}\label{lm_Vir}
The following operators in $\SF_{\d+A}$
$$L_{n}^{\d+A}:=\sum_{i+j=n}\normord{\psi_i^-\psi_j^+}
\,+\,c_n1,\qquad c_n=\frac12\sum_{i+j=n} \left(C_{i}^{-+}C_{j}^{+-}+C_{i}^{++}C_{j}^{--}\right)$$
fulfill the relations of the Virasoro algebra at central charge $c=-2$. 
\end{lemma}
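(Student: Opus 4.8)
The strategy is the standard Sugawara computation for free fermions, executed carefully with the deformed anticommutator so as to track the extra constants. The plan is to first establish a commutator formula of the shape $[L_m^{\d+A},\psi^\pm_k]$, then bootstrap from this to $[L_m^{\d+A},L_n^{\d+A}]$, and finally identify the central term.

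First I would compute $[L_m^{\d+A},\psi^\pm_k]$. Writing $L_m^{\d+A}=\sum_{i+j=m}\normord{\psi_i^-\psi_j^+}+c_m 1$ and using the Leibniz rule for a bracket of an even element with an odd one, one gets $[\normord{\psi_i^-\psi_j^+},\psi_k^a]=\psi_i^-\{\psi_j^+,\psi_k^a\}\mp\{\psi_i^-,\psi_k^a\}\psi_j^+$ up to the normal-ordering constants, which drop out of the commutator. Plugging in $\{\psi^a_m,\psi^b_n\}=m(e^a,e^b)\delta_{m+n,0}+\sum_k C_k^{ab}\delta_{m+n,k}$ and summing over $i+j=m$, the $\delta$'s collapse the sums and one obtains an expression of the form $[L_m^{\d+A},\psi_k^\pm]=(\text{linear in }\psi^\pm_{k-m})+\sum_{\ell}(\text{linear in }\psi^\pm_{k-m+\ell}\text{ with coefficient built from }C_\ell)$. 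This is exactly the content announced as Lemma~\ref{lm_trafoGlobal} (the global statement $[L_{-1},\psi(z)]=(\partial_z+A(z))\psi(z)$), so I would simply quote/derive that formula here; it is the engine for everything that follows. The regular singular piece $C_0$ is degree-preserving, the regular pieces ($C_\ell$, $\ell<0$) lower degree, the irregular piece $C_1$ raises degree — but all of this is bookkeeping, not an obstacle.

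Next, $[L_m^{\d+A},L_n^{\d+A}]$. Write $L_n^{\d+A}=\sum_{i+j=n}\normord{\psi_i^-\psi_j^+}+c_n$ and use $[L_m^{\d+A},\normord{\psi_i^-\psi_j^+}]=[L_m^{\d+A},\psi_i^-]\psi_j^++\psi_i^-[L_m^{\d+A},\psi_j^+]$ modulo the fact that the bracket of $L_m^{\d+A}$ with a normal-ordered product differs from the bracket with the plain product only by a term that is a $c$-number times what one gets from $[L_m^{\d+A},\{\psi_i^-,\psi_j^+\}\indic_{i\gtrsim j}]=0$. Substituting the commutator formula from the previous step and re-summing, the "leading" contributions reorganize into $(m-n)L_{m+n}^{\d+A}$ exactly as in the undeformed case; the $C_k$-dependent corrections to $[L_m^{\d+A},\psi^\pm]$ contribute terms which, after re-indexing, must assemble precisely into $(m-n)c_{m+n}1$ plus (for $m+n=0$) the central charge term. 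The main obstacle is this last reorganization: one has to verify that the quadratic-in-$C$ terms coming from double contractions reproduce $(m-n)c_{m+n}$ with $c_n=\tfrac12\sum_{i+j=n}(C_i^{-+}C_j^{+-}+C_i^{++}C_j^{--})$, and that the purely numerical (no $C$) double contraction gives the familiar $\tfrac{c}{12}(m^3-m)\delta_{m+n,0}$ with $c=-2$ (the "$-2$" and the "$m^3-m$" arising from $\sum_{i}(\text{something quadratic in }i)$ regularized by the normal ordering, exactly as for a $bc$-system of weights $(1,0)$, equivalently $\beta\gamma$ shifted, equivalently symplectic fermions). I expect the cleanest route is: (i) do the $C=0$ computation first to pin down the $c=-2$ central term — this is the classical symplectic fermion / $\mathcal{W}_2$ Sugawara computation; (ii) then handle the $C$-linear cross terms, which shift the summands of $L_{m+n}^{\d+A}$ by constants and thereby force the $c_n$ to appear; (iii) check the $C$-quadratic terms close up into $(m-n)c_{m+n}$, using the symmetry $C_k^{ab}=C_k^{ba}$ and the vanishing $\{\psi^\pm_i,\psi^\pm_j\}$-type relations where relevant.

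Finally I would assemble the pieces: $[L_m^{\d+A},L_n^{\d+A}]=(m-n)L_{m+n}^{\d+A}+\tfrac{c}{12}(m^3-m)\delta_{m+n,0}1$ with $c=-2$, which is the Virasoro relation as claimed. A sanity check I would include: specialize to $A(z)=A_0 z^{-1}$ (regular singular, $C_k=0$ for $k\neq0$), where $c_0=\tfrac12(C_0^{-+}C_0^{+-}+C_0^{++}C_0^{--})=-\tfrac12\det(A_0)$ up to sign under the identification $C_0=\begin{psmallmatrix}0&-1\\1&0\end{psmallmatrix}A_0$, recovering the familiar $L_0$-shift for a semisimple monodromy, as remarked after the statement; and specialize to $A=0$ to recover the usual symplectic fermion Virasoro action. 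The genuinely new bookkeeping is entirely in the order-counting (degree $-n$, $-n+1$, etc.) which does not affect the algebra of the commutator, only the module structure studied later, so for this lemma it can be ignored.
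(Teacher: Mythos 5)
Your plan is sound and would produce a correct proof, but it reorganizes the calculation in a way that genuinely differs from the paper's. The paper directly expands the quartic bracket $\sum_{i+j=m,\,k+l=n}[\psi_i^-\psi_j^+,\psi_k^-\psi_l^+]$ using the fermionic Leibniz rule, rewrites each term via the normal-ordering identity, and then discusses the six resulting families of contributions (a--f) one by one: one family gives $(m-n)L^{\mathrm{pre}}_{m+n}$, one gives the $c=-2$ anomaly, one gives the $C$-quadratic shift $(m-n)c_{m+n}$, and the remaining three ($C$-linear ones) cancel. Your route instead factors the computation through $[L_m^{\d+A},\psi^\pm_k]$ (exactly the content of Lemma~\ref{lm_trafoGlobal}, whose proof does not rely on Lemma~\ref{lm_Vir} and could legitimately be done first) and then applies one more Leibniz step to $L_n^{\d+A}=\sum_{i+j=n}\normord{\psi_i^-\psi_j^+}$. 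This is a standard and arguably cleaner organization, since the intermediate result $[L_m^{\d+A},\psi^\pm_k]$ is reusable and conceptually transparent (the ``connection acting on the field'' formula), whereas the paper's direct approach keeps everything inside one large sum. The trade-off is that the bookkeeping of which central contributions appear is less explicit in your plan.

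One point to tighten: in step~(ii) you say the ``$C$-linear cross terms $\ldots$ force the $c_n$ to appear.'' As the paper's proof makes clear, the $C$-linear central terms (its cases b, c, e) in fact \emph{cancel}; the $c_n$-shift is produced entirely by the $C$-\emph{quadratic} double contractions (its case f), which is what you correctly assign to step~(iii). In your Leibniz organization, the $C$-linear central terms arise from normal-ordering products like $(-i\,\psi^-_{m+i})\psi^+_j$ (picking up $C^{-+}_{m+n}$) and $(\psi^-_a C^{+-}_{m-a+i})\psi^+_j$ (picking up the numerical part $-a\delta_{a+j,0}$ of the anticommutator), and these must cancel against each other after re-indexing; the $C$-quadratic central terms arise when both the prefactor and the residual anticommutator contribute a $C$. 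It is worth separating these two cancellation/collection mechanisms explicitly, because the cancellation of the $C$-linear central terms is not entirely automatic --- it is the analogue of the paper's cases b, c, e --- while the survival of the $C$-quadratic term is what requires the redefinition $L_n^{\d+A}=L_n^{\d+A,\mathrm{pre}}+c_n 1$. With that clarification your plan matches the paper's in content, by a different decomposition.
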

\begin{proof}
We make the preliminary definition
$$L_{n}^{\d+A,\,pre}:=\sum_{i+j=n}\normord{\psi_i^-\psi_j^+}$$
Our main goal is to compute the commutator 
$$[L_{m}^{\d+A,\,pre},L_{n}^{\d+A,\,pre}]
=\sum_{\substack{i+j=m\\k+l=n}} [\psi_i^-\psi_j^+,\psi_k^-\psi_l^+]$$
We use summand-wise the following formula for the commutator 
\begin{align*}
[\psi_i^-\psi_j^+,\psi_k^-\psi_l^+]
&=\psi_i^-\{\psi_j^+,\psi_k^-\}\psi_l^+
-\{\psi_i^-,\psi_k^-\}\psi_j^+\psi_l^+
+\psi_k^-\psi_i^- \{\psi_j^+,\psi_l^+\}
-\psi_k^-\{\psi_i^-,\psi_l^+\}\psi_j^+
\end{align*}
We can express this commutator in terms of normally ordered products using formula  \eqref{formula_normord}
\begin{align*}
[\psi_i^-\psi_j^+,\psi_k^-\psi_l^+]
&=\{\psi_j^+,\psi_k^-\}\normord{\psi_i^-\psi_l^+}
-\{\psi_i^-,\psi_l^+\}\normord{\psi_k^-\psi_j^+}
+\{\psi_j^+,\psi_k^-\}\{\psi_i^-,\psi_l^+\}
(\indic_{i\gtrsim l}-\indic_{k\gtrsim j})\\
&+\{\psi_j^+,\psi_l^+\}\normord{\psi_k^-\psi_i^-}
-\{\psi_i^-,\psi_k^-\}\normord{\psi_j^+\psi_l^+}
-\{\psi_i^-,\psi_k^-\}\{\psi_j^+,\psi_l^+\}(\indic_{k\gtrsim i}-\indic_{j\gtrsim l})
\end{align*}
We plug into this formula the explicit form of the anticommutators 
$$\{\psi^a_i,\psi_j^{b}\}=(a i) \delta_{i+j=0}\delta_{-a=b}+C_{i+j}^{a b}$$ 
We now discuss all terms appearing in this way.
Note that for fixed $m,n$ only finitely many terms have $\indic_{i\gtrsim l}-\indic_{k\gtrsim j}\neq 0$.
\begin{enumerate}[a)]
\item $j\normord{\psi_i^-\psi_l^+}$ and $-l\normord{\psi_k^-\psi_j^+}$ \newline
In the second term we relabel $i,j,k,l$ with $(m-n)+k,l,i,(n-m)+j$, which preserves the sum conditions $i+j=m$ and $k+l=n$. Then  we are left with $\big(j-((n-m)+j)\big)\normord{\psi_i^-\psi_l^+}$ and thus with the overall sum 
$$(m-n)L_{m+n}^{\d+A,\,pre}.$$
\item $C_{j+k}^{+-}\normord{\psi_i^-\psi_l^+}$ and $-C_{i+l}^{-+}\normord{\psi_k^-\psi_j^+}$\newline
In the second term we again relabel $i,j,k,l$ with $(m-n)+k,l,i,(n-m)+j$, which preserves the sum conditions and maps the index $i+l$ to $j+k$. Then by $C^{+-}=C^{-+}$ the two summands cancel.
\item $C_{j+l}^{++}\normord{\psi_k^-\psi_i^-}$ and similarly 
$-C_{i+k}^{--}\normord{\psi_j^+\psi_l^+}$\newline
In the first term, we relabel $i,j,k,l$ with $k,(m-n)+l,i,(n-m)+j$, which also preserves the sum conditions and preserves the index $j+l$. Then by the antisymmetry $\normord{\psi_k^-\psi_i^-}=-\!\normord{\psi_i^-\psi_k^-}$ the sum over the first term vanishes individually. Similarly, the sum over the second terms vanishes individually.
\item $j\delta_{j+k=0}(-i)\delta_{i+l=0}(\indic_{i\gtrsim l}-\indic_{k\gtrsim j})$\newline
The sum conditions $i+j=m$ and $k+l=n$ together with $j+k=0$ and $i+l=0$ leave for the tuples $i,j,k,l$ only $i,m-i,-m+i,-i$ together with the condition $m+n=0$. The term $(\indic_{i\gtrsim l}-\indic_{k\gtrsim j})$ is $+1$ if $m>i>0$ and $+1/2$ if one and only one of the inequalities hold as equalities, but in these boundary cases the summand $j(-i)$ vanishes. Similarly the term is $-1$ resp. $-1/2$ if $m<i<0$. For $m>0$ only the case $+1$ appears and the sum over $i$ is explicitly
\begin{align*}
 \sum_{m>i>0} (m-i)(-i)
%&=\sum i^2-im 
&= \frac{(m-1)m(2m-1)}{6}-\frac{(m-1)m}{2}m
%&=2m^3-3m^2 +m -3m^3+3m^2  
=-\frac{m^3-m}{6}
\end{align*}
For $m=0$ the condition is empty and for $m<0$ we get the negative result, hence this formula also holds in these cases. We are left with the overall contribution $-\frac{m^3-m}{6}\delta_{m+n=0}$, corresponding to the Virasoro algebra anomaly at central charge $c=-2$.
\item $j\delta_{j+k=0}C_{i+l}^{-+}(\indic_{i\gtrsim l}-\indic_{k\gtrsim j})$ and $C_{j+k}^{+-}(-i)\delta_{i+l=0}(\indic_{i\gtrsim l}-\indic_{k\gtrsim j})$\newline
The sum conditions $i+j=m$ and $k+l=n$ together with $j+k=0$ leave for the tuples $i,j,k,l$ only $m-j,j,-j,n+j$, hence  $C_{i+l}^{-+}=C_{m+n}^{-+}$ and the term $(\indic_{i\gtrsim l}-\indic_{k\gtrsim j})$ is $+1$ if $0<j<(m+n)/2$. Similarly, for the second term the condition $i+l=0$ leaves only  $i,m-i,n+i,-i$, hence  $C_{i+l}^{-+}=C_{m+n}^{-+}$ and the term $(\indic_{i\gtrsim l}-\indic_{k\gtrsim j})$ is $+1$ if $0<i<(m+n)/2$. Apparently, these terms again cancel.  
%\begin{align*}
% \left(\sum_{m>i>\frac{m+n}{2}} (m-i)\right)A_{m+n}^{-+}
% &= \frac{(m-n-1)/2\,(m-n+1)/2)m}{2}=\\ 
% \left(\frac12 \frac{m-n}{2}+ \sum_{m>i>\frac{m+n}{2}} (m-i)\right)A_{m+n}^{-+}
% &=\left(\frac12 \frac{m-n}{2}+ \frac{(m-n-2)/2\,(m-n)/2}{2}\right)A_{m+n}^{-+}
% =\left(\frac{m-n}{2}\right)^2A_{m+n}^{-+}
%\end{align*}

\item $C_{j+k}^{+-}C_{i+l}^{-+}(\indic_{i\gtrsim l}-\indic_{k\gtrsim j})
$ and  $-C_{i+k}^{--}C_{j+l}^{++}(\indic_{k\gtrsim i}-\indic_{j\gtrsim l})$\newline
Relabeling $i,j$ in the second term gives 
$$\left(C_{j+k}^{+-}C_{i+l}^{-+}+C_{j+k}^{--}C_{i+l}^{++}\right)
(\indic_{i\gtrsim l}-\indic_{k\gtrsim j})$$
We now count the number of contributions for fixed indices of $C$: We claim
$$\sum_{\substack{i,j,k,l \\ i+j=m,\;k+l=n\\i+l=a,\;j+k=b}} (\indic_{i\gtrsim l}-\indic_{k\gtrsim j})
=\frac{m-n}{2}\delta_{a+b=m+n}$$
We now show this claim: The conditions leaves for the tuple $i,j,k,l$ only $i,m-i,n-a+i,a-i$ and the condition $a+b=m+n$. The term $(\indic_{i\gtrsim l}-\indic_{k\gtrsim j})$ is $+1$ for $(m-n)/2+a/2>i>a/2$ and $+1/2$ if one and only one of these inequalities hold as equalities, and respectively for $-1,-1/2$. If we assume $m>n$ then we have four cases: If $m-n$ is even and $a$ is odd, then the inequality has $(m-n)/2$ solutions $i$ with $+1$. If $m-n$ is even and $a$ is even, then the inequality has $(m-n)/2-1$ solutions $i$ with $+1$ and two boundary solutions $i$ with $+1/2$. If $m-n$ is odd, then the inequality has $(m-n-1)/2$ solutions $i$ with $+1$ and one boundary solutions $i$ with $+1/2$, depending on $a$ being even or odd. In all cases the sum is $(m-n)/2$, which proves the claim. The same result also arises for $m<n$ and $m=n$. As a consequence, we have the overall contribution
$$\frac{m-n}{2}\sum_{\substack{a,b\\a+b=m+n}} \left(C_{a}^{-+}C_{b}^{+-}+C_{a}^{++}C_{b}^{--}\right)$$
\end{enumerate}
Collecting the nonvanishing terms a), d), f) we have the overall result
\begin{align*}
[L_{m}^{\d+A,\,pre},L_{n}^{\d+A,\,pre}]
&=(m-n)L_{m+n}^{\d+A,\,pre}
-\frac{m^3-m}{6}\delta_{m+n=0}\\
&+\frac{m-n}{2}\sum_{\substack{a,b\\a+b=m+n}} \left(C_{a}^{-+}C_{b}^{+-}+C_{a}^{++}C_{b}^{--}\right)
\end{align*}
The nonvanishing terms a) and d) are the usual Virasoro relations at central charge $c=-2$. The additionally nonvanishing term f) is of the form $(m-n)c_{m+n}1$, so it is a Lie algebra coboundary, which can be easily removed by redefining 
$$L_n^{\d+A}:=L_n^{\d+A,\,pre}+c_{n}1$$
Note that adding a constants to either argument of the commutator does not change the result.
\end{proof}

\subsection{Global definition}\label{sec_VirasoroGlobal}

As a consequence of the definition, we find

\begin{lemma}\label{lm_trafoGlobal}
$$\left[L_n^{\d+A},\begin{pmatrix}\psi_k^+ \\\psi_k^-\end{pmatrix}\right]
=(-k)\begin{pmatrix}\psi_{n+k}^+ \\\psi_{n+k}^-\end{pmatrix}
+\sum_{i\in\Z}
\begin{pmatrix} -C^{-+}_{n-i+k} & C^{++}_{n-i+k} \\ 
-C^{--}_{n-i+k}  & C^{+-}_{n-i+k} \end{pmatrix}
\begin{pmatrix}\psi_i^+ \\\psi_i^-\end{pmatrix}
$$
\end{lemma}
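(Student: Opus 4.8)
The plan is to reduce the claim to the defining anticommutators by a direct computation, working one summand of $L_n^{\d+A}$ at a time. First I would note that the constant $c_n1$ is central and hence drops out of the commutator. Next, by the second identity in \eqref{formula_normord}, each summand $\normord{\psi_i^-\psi_j^+}$ differs from the plain product $\psi_i^-\psi_j^+$ only by the scalar $\{\psi_i^-,\psi_j^+\}\indic_{i\gtrsim j}$, which is again central; therefore $[\normord{\psi_i^-\psi_j^+},\psi_k^a]=[\psi_i^-\psi_j^+,\psi_k^a]$ for $a\in\{+,-\}$. On a module in $\cC$ only finitely many summands act nontrivially on a given vector, so handling $L_n^{\d+A}$ term by term is legitimate and the resulting sum over $i\in\Z$ is locally finite; I would read the asserted identity as one of operators on $\cC$.

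Then I would invoke the super-Leibniz rule for a product of two odd elements commuted with an odd element,
$$[\psi_i^-\psi_j^+,\psi_k^a]=\psi_i^-\{\psi_j^+,\psi_k^a\}-\{\psi_i^-,\psi_k^a\}\psi_j^+,$$
and substitute the anticommutators in the form $\{\psi_m^b,\psi_l^c\}=(bm)\,\delta_{m+l=0}\,\delta_{c=-b}+C_{m+l}^{bc}$ already used in the proof of Lemma \ref{lm_Vir}. For $a=+$ this produces $[\psi_i^-\psi_j^+,\psi_k^+]=C_{j+k}^{++}\psi_i^-+i\,\delta_{i+k=0}\,\psi_j^+-C_{i+k}^{-+}\psi_j^+$, and for $a=-$ it produces $[\psi_i^-\psi_j^+,\psi_k^-]=j\,\delta_{j+k=0}\,\psi_i^-+C_{j+k}^{+-}\psi_i^--C_{i+k}^{--}\psi_j^+$. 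Summing over $i+j=n$, the Kronecker delta forces $i=-k$ (respectively $j=-k$) with coefficient $-k$, producing the term $(-k)\psi_{n+k}^{\pm}$ on the right-hand side; in the two remaining terms I would relabel the free summation index so that $\psi_i^{\pm}$ appears with $j=n-i$, which turns both $C_{j+k}$ and $C_{i+k}$ into $C_{n-i+k}$, and then collecting the coefficients of $\psi_i^+$ and $\psi_i^-$ reproduces exactly the two rows of the matrix in the statement.

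I expect the whole argument to be essentially bookkeeping; the only points that require a little care are the sign and index conventions in the anticommutators --- in particular the asymmetry $(e^+,e^-)=-(e^-,e^+)$, which is what makes the roles of $\psi_i^-$ and $\psi_j^+$ genuinely different in the two cases $a=\pm$ --- and checking that each relabeling preserves the constraint $i+j=n$. So the hard part, such as it is, is purely organisational: the statement is a formal consequence of Definition \ref{def_deformedModeAlgebra} together with the shape of $L_n^{\d+A}$ established in Lemma \ref{lm_Vir}.
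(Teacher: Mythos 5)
Your proof is correct and takes essentially the same route as the paper: reduce to the super-Leibniz rule $[\psi_i^-\psi_j^+,\psi_k^a]=\psi_i^-\{\psi_j^+,\psi_k^a\}-\{\psi_i^-,\psi_k^a\}\psi_j^+$, substitute the defining anticommutators, sum over $i+j=n$, extract the delta term and relabel the free index. Your observation that $\normord{\psi_i^-\psi_j^+}$ differs from $\psi_i^-\psi_j^+$ only by the central scalar $\{\psi_i^-,\psi_j^+\}\indic_{i\gtrsim j}$, so the two commutators agree, is a mild streamlining of the paper's more explicit re-expansion of $[\normord{\psi_i^-\psi_j^+},\psi_k^c]$ via the indicator functions, but the substance of the computation is identical.
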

\begin{proof}
\begin{align*}
[\psi_i^a\psi_j^b,\psi_k^c]
&=\psi_i^a \{\psi_j^b,\psi_k^c\}
-\{\psi_i^a,\psi_k^c\} \psi_j^b
\\
&=\psi_i^a\left( (bj)\delta_{j+k=0}\delta_{-b=c}+C_{j+k}^{bc} \right)
- \psi_j^b\left( (ai)\delta_{i+k=0}\delta_{-a=c}+C_{i+k}^{ac} \right)
\end{align*}
For the normal ordered product we get essentially the same result:
\begin{align*}
[\normord{\psi_i^-\psi_j^+},\psi_k^c]
&=
\psi_i^-\big( (+j)\delta_{j+k=0}\delta_{-=c}+C_{j+k}^{+c} \big)\indic_{j\gtrsim i}
- \psi_j^+\big( (-i)\delta_{i+k=0}\delta_{+=c}+C_{i+k}^{-c} \big)\indic_{j\gtrsim i}\\
&\;-\psi_j^+\big( (-i)\delta_{i+k=0}\delta_{+=c}+C_{i+k}^{-c} \big)\indic_{i\gtrsim j}
+ \psi_i^-\big( (+j)\delta_{j+k=0}\delta_{-=c}+C_{j+k}^{+c} \big)\indic_{i\gtrsim j}\\
&=\psi_i^-\big( j\delta_{j+k=0}\delta_{-=c}+C_{j+k}^{+c} \big)
+\psi_j^+\big(i\delta_{i+k=0}\delta_{+=c}+C_{i+k}^{-c} \big)
\end{align*}
We sum over $i+j=n$ and separate the terms. In the second term we switch the role of $i,j$ and in the third term we apply the delta-expressions.
$$
\psi_i^-C_{n-i+k}^{+c}
-\psi_i^+C_{n-i+k}^{-c}
-k\psi_{n+k}^c   %j=-k, dann ist i=n-j=n+k
$$
The constant $c_n\cdot 1$ has no effect in the commutator. Hence this proves the first assertion on $L_n^{\d+A}$. 
\end{proof}

This result can be read as a transformation behavior of a field. In particular $L_{-1}$ acts as the connection: 

\begin{corollary}
If we consider $\psi(z)$ as a function with in the dual $\sl_2$-representation $\overline{\C^2}$, that is, acting by $-A^T$, then the previous result reads\footnote{We seem to not be able to use the same conventions for the Definition of the Clifford algebra in Section \ref{sec_defGlobal} and the Virasoro action in this Section \ref{sec_defGlobal}, probably because one are functions and one are functionals.}

$$[L_{-1}, \psi(z)]=\left(\frac{\partial}{\partial z}+A(z)\right){\psi}(z)$$

\end{corollary}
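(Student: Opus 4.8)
The plan is to derive this corollary directly from Lemma~\ref{lm_trafoGlobal} by repackaging the mode formula into generating-function form. First I would specialize $n=-1$ in the lemma, obtaining for each $k\in\Z$
\begin{align*}
\left[L_{-1}^{\d+A},\begin{pmatrix}\psi_k^+ \\\psi_k^-\end{pmatrix}\right]
&=(-k)\begin{pmatrix}\psi_{k-1}^+ \\\psi_{k-1}^-\end{pmatrix}
+\sum_{i\in\Z}
\begin{pmatrix} -C^{-+}_{-1-i+k} & C^{++}_{-1-i+k} \\
-C^{--}_{-1-i+k}  & C^{+-}_{-1-i+k} \end{pmatrix}
\begin{pmatrix}\psi_i^+ \\\psi_i^-\end{pmatrix}.
\end{align*}
Then I would introduce the field $\psi(z)=\sum_{k\in\Z}\binom{\psi_k^+}{\psi_k^-}z^{-k}$ (or whichever power of $z$ matches the paper's mode conventions so that $\psi_k$ is the coefficient extracting residue), multiply the above identity by $z^{-k}$ and sum over $k$. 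The first term $\sum_k(-k)\binom{\psi_{k-1}^+}{\psi_{k-1}^-}z^{-k}$ reindexes to $\sum_j(-(j+1))\binom{\psi_j^+}{\psi_j^-}z^{-j-1}$, which is exactly $\frac{\partial}{\partial z}\psi(z)$ since $\frac{\partial}{\partial z}z^{-j}=-jz^{-j-1}$ — this is the routine part.

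The main point is to recognize the convolution in the second term as the action of $A(z)$ on $\psi(z)$. Using $C_m=\begin{psmallmatrix} 0& -1 \\ 1 & 0\end{psmallmatrix}A_m$ from formula~\eqref{formula_convention}, the matrix $\begin{psmallmatrix} -C^{-+}_{m} & C^{++}_{m} \\ -C^{--}_{m}  & C^{+-}_{m} \end{psmallmatrix}$ should be identified — via the explicit entries listed in \eqref{formula_convention} — as $-A_m^T$ acting on the column $\binom{\psi^+}{\psi^-}$, or equivalently as $A_m$ acting in the dual representation $\overline{\C^2}$. I would verify this by matching the four entries: the footnote in the statement already flags that functions versus functionals forces this transpose, so the computation is to check $\big(-A_m^T\big)^+_{\ +}=-A_m^{++}$, and comparing with $-C^{-+}_m=-A^{++}_m$ from \eqref{formula_convention} confirms the $(+,+)$-entry, and similarly for the other three. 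Then $\sum_{k}\sum_i (-A^T_{k-1-i})\binom{\psi_i^+}{\psi_i^-}z^{-k}$ reindexes with $m=k-1-i$ to $\big(\sum_m (-A^T_m)z^{-m-1}\big)\cdot\big(\sum_i \binom{\psi_i^+}{\psi_i^-}z^{-i}\big)=A(z)\psi(z)$, using that $A(z)=\sum_m A_m z^{-m-1}$ acts on $\psi(z)$ in the dual representation, i.e. by $-A(z)^T$ on the column vector.

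Assembling the two pieces gives $[L_{-1}^{\d+A},\psi(z)]=\big(\frac{\partial}{\partial z}+A(z)\big)\psi(z)$, with $A(z)$ understood to act in $\overline{\C^2}$, which is the assertion. The main obstacle is purely bookkeeping: getting the sign of the grading shift in the definition of $\psi(z)$ consistent (whether $\psi_k$ sits at $z^{-k}$ or $z^{-k-1}$), and correctly tracking the transpose so that the claim matches the stated convention. Once the convention is pinned down, both reindexings are mechanical, and no genuinely new input beyond Lemma~\ref{lm_trafoGlobal} is needed.
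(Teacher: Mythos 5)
Your strategy is exactly the paper's: specialize $n=-1$ in Lemma~\ref{lm_trafoGlobal}, resum modes into fields, and identify $\begin{psmallmatrix} -C^{-+}_{m} & C^{++}_{m} \\ -C^{--}_{m} & C^{+-}_{m} \end{psmallmatrix}=-A_m^T$ via \eqref{formula_convention}. However, the mode convention you committed to, $\psi(z)=\sum_k\psi_k z^{-k}$, is the wrong one: with it $\frac{\partial}{\partial z}\psi(z)=\sum_j(-j)\psi_j z^{-j-1}$, which does not equal the reindexed sum $\sum_j(-(j+1))\psi_j z^{-j-1}$ you obtained from the lemma --- they differ by $-z^{-1}\psi(z)$, and your own stated rule $\frac{\partial}{\partial z}z^{-j}=-jz^{-j-1}$ contradicts the claim. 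You need the weight-one convention $\psi(z)=\sum_k\psi_k z^{-k-1}$ (the paper matches coefficients of $z^{-1-k}$), under which both the derivative term ($\frac{\partial}{\partial z}z^{-j-1}=-(j+1)z^{-j-2}$) and the convolution term ($z^{-k-1}=z^{-m-1}\cdot z^{-i-1}$ for $m=k-1-i$) close correctly. You flagged this as the pitfall, so the fix is the one-line correction of the power of $z$; the rest of the argument is sound and coincides with the paper's.
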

For the proof we check that with the identification of $C$ and $A$ in formula \eqref{formula_convention}
$$\begin{pmatrix} -C^{-+}_{j} & C^{++}_{j} \\ 
-C^{--}_{j}  & C^{+-}_{j} \end{pmatrix}
=\begin{pmatrix} A^{--}_j & -A^{-+}_j \\ 
-A^{+-}_j  & A^{++}_j \end{pmatrix}
=-A^T
$$
then $A(z)\psi(z)=\sum_{j,i}A_j\psi_i z^{-1-j-1-i}$ and this recovers the right-hand side of the previous lemma for $j=n-i+k$ for $n=-1$ as coefficient of $z^{-1-k}$.

\subsection{Formulas in the general case}\label{sec_Formulas}

We make the formulas for $L_n^{\d+A}$ more explicit in the case of an irregular singularity of Poisson order $1$ as in Example \ref{ex_PoissonOrderOne} and the induced $\SF_{\d+A}$-representation $\Ind(V^{01})$ in Corollary \ref{cor_VNullEins}, which as a vector space coincides with $\SF_{\d+A}^{\leq 0}$. 

The generators $\psi_n^\pm,\,n\leq0$ act by left-multiplication, note that the algebra relations among themselves depend on the terms $A_k,\,k\leq 0$ in $\d+A$, and only the relations among the $\psi_0^\pm$ depends on the regular singular term $A_0$. The generators $\psi_m^\pm,\,m>0$ act as an anti-derivation $\{\psi_m^\pm,-\}$, and with the explicit terms in Example \ref{ex_PoissonOrderOne} we can write this action 
\begin{align}
  \label{formula_positiveModesActing}
  \begin{split}
\psi_{m>0}^\pm 
\longmapsto 
\xi \frac{\partial}{\partial \psi_{-(m-1)}^\mp}
+(\pm m+\varepsilon)\frac{\partial}{\partial \psi_{-m}^\mp}
&+\sum_{k=-1}^{-\infty} C_k^{\pm\mp}\frac{\partial}{\partial \psi_{-(m-k)}^\mp} \\
+\tau^\pm\frac{\partial}{\partial \psi_{-m}^+}
&+\sum_{k=-1}^{-\infty} C_k^{\pm\pm}\frac{\partial}{\partial \psi_{-(m-k)}^+}
\end{split}
\end{align}
To get an explicit expression for $L_n^{\d+A}$, we plug this into Lemma \ref{lm_Vir} and formula \eqref{formula_normord}
$$L_n^{\d+A}
=
c_n\cdot 1+
\sum_{i+j=n} \normord{\psi_i^-\psi^+_j}
= 
c_n\cdot 1
+
\sum_{i+j=n} 
\psi_i^-\psi^+_j\,\indic_{j \gtrsim i}
-\psi_j^+\psi^i_i\,\indic_{i \gtrsim j}
$$
To get an overview over the resulting terms, we introduce the following symbolic notation for different types of summands
\begin{align*}
\left(L_n^{\d+A}\right)_{\psi,\psi}
&:=\sum_{\substack{i+j=n \\ i,j\leq 0}} \normord{\psi_i^-\psi^+_j}\\
\left(L_n^{\d+A}\right)_{\partial,\partial}
&:=\sum_{\substack{i+j=n \\ i,j> 0}} \normord{\psi_i^-\psi^+_j}\\
\left(L_n^{\d+A}\right)_{\psi,\partial}
&:=\sum_{i\leq 0,j> 0} \psi_i^-\psi^+_j
-\sum_{i> 0,j\leq 0} \psi_j^+\psi_i^-
\end{align*}
Note that $\left(L_n^{\d+A}\right)_{\psi,\psi}=\left(L_n\right)_{\psi,\psi}$ agrees with the untwisted term and only appears for $n\leq 0$. Note that in $\left(L_n^{\d+A}\right)_{\partial,\partial}=\left(L_n\right)_{\psi,\psi}$ the normally ordered product coincides with the usual product, and the term appears only for $n\geq 2$. With the explicit expression for $\psi_{m>0}^\pm$ in formula \eqref{formula_normord} we get many terms besides the untwisted term (involving $m$), namely: shifting the degree by $-1$ (involving $\xi$), by $0$ (involving $\varepsilon, \tau^\pm$) and by $-k>0$  (involving $C_k^{\pm\pm}$). We indicate these terms by denoting the shift in degree on the symbol $\partial$. Altogether this gives the following terms for $L_n^{\d+A}$, which we sort by degree ($0$, $n-2$, $n-1$, $n$,...)

\begin{align*}
L_n^{\d+A}
&=c_n\cdot 1 \\
&+\left(L_n^{\d+A}\right)_{\partial-1,\partial-1} \\
&+\left(L_n^{\d+A}\right)_{\psi,\partial-1} 
+\left(L_n^{\d+A}\right)_{\partial-1,\partial}
+\left(L_n^{\d+A}\right)_{\partial-1,\partial+0}\\
&+\underbrace{\left(L_n\right)_{\psi,\psi} +\left(L_n\right)_{\psi,\partial}+ \left(L_n\right)_{\partial,\partial}}_{L_n}
+\left(L_n^{\d+A}\right)_{\partial-1,\partial+1} 
+\left(L_n^{\d+A}\right)_{\psi,\partial+0}
+\left(L_n^{\d+A}\right)_{\partial+0,\partial+0} \\
&+\cdots
\end{align*}

We spell out some of these terms more explicitly
\begin{itemize}
\item We have from the formula in Lemma \ref{lm_Vir}:
$$c_2=\frac12 \xi^2,\qquad
c_1=\frac12 \xi \varepsilon,\qquad
c_0=\frac12 (\varepsilon^2+\tau^+\tau^-)+\frac12 \xi C_1^{+-},\qquad
\cdots$$
\item The unique term in degree $n-2$ reads:
$$\left(L_n^{\d+A}\right)_{\partial-1,\partial-1}=\xi^2 \sum_{\substack{i+j=n\\i,j\geq 1}}\frac{\partial}{\partial \psi^+_{-(i-1)}}\frac{\partial}{\partial \psi^-_{-(j-1)}}$$
\item In degree $n-1$ we have the following terms
\begin{align*}
\left(L_n^{\d+A}\right)_{\partial-1,\partial}
%=psi^-_i psi^+_j
&=\sum_{\substack{i+j=n\\i\geq 1,j\geq 0}}\xi\frac{\partial}{\partial \psi^+_{-(i-1)}}
j\frac{\partial}{\partial \psi^-_{-j}}
+(-i)\frac{\partial}{\partial \psi^+_{-i}}
\xi\frac{\partial}{\partial \psi^-_{-(j-1)}}\\
&=\xi \sum_{\substack{i+j=n-1\\i\geq 0,j\geq 0}} (j-i) \frac{\partial}{\partial \psi^+_{-i}}\frac{\partial}{\partial \psi^-_{-j}} \\
\left(L_n^{\d+A}\right)_{\partial-1,\partial+0}
&=\xi \sum_{\substack{i+j=n-1\\i\geq 0,j\geq 0}}2\varepsilon\frac{\partial}{\partial \psi^+_{-i}}
\frac{\partial}{\partial \psi^-_{-j}}
+\tau^+\frac{\partial}{\partial \psi^+_{-i}}
\frac{\partial}{\partial \psi^+_{-j}}
+\tau^-\frac{\partial}{\partial \psi^-_{-i}}
\frac{\partial}{\partial \psi^-_{-j}}
\end{align*}

\item The terms $\left(L_n^{\d+A}\right)_{\psi,\partial-k}$ in degree $n-k$ for $k\leq 1$  read 

$$\left(L_n^{\d+A}\right)_{\psi,\partial-k}=C^{+-}_k \sum_{\substack{i+(j-k)=n-k \\ i\leq 0,\;j-k\geq 0}} \psi_i^-\frac{\partial}{\partial \psi^-_{-(j-k)}}-\psi_i^+\frac{\partial}{\partial \psi^+_{-(j-k)}}
=C^{+-}_k\,\Shift_{n-k}$$
where the constant is $C_k^{+-}=\xi$ for $k=1$ and $C_k^{+-}=\varepsilon$ for $k=0$. We have introduced an derivation given on generators by
$$\Shift_{n-k}:\quad \psi^\pm_j\mapsto \mp\psi^\pm_{j+(n-k)}$$
Note that in the usual Virasoro action the term $\left(L_n\right)_{\psi,\partial}$ is a similar shift by $n$, but there involves an additional factor $\mp j$.
\item Further terms in degree $>n$ arise only in presence of regular terms $C_k^{\pm\pm},k<0$.
\end{itemize}
Altogether we can list these first terms for some small important cases
\begin{align*}
L_2^{\d+A} 
&=\frac12 \xi^2 \cdot 1+ \xi^2\frac{\partial}{\partial \psi^+_{0}}\frac{\partial}{\partial \psi^-_{0}} \\
&+\xi\Shift_1 + \xi \sum_{\substack{i+j=n-1\\i\geq 0,j\geq 0}} (j-i) \frac{\partial}{\partial \psi^+_{-i}}\frac{\partial}{\partial \psi^-_{-j}}  \\
&+L_2+\cdots \\
&\\
L_1^{\d+A} 
&=\frac12 \xi \varepsilon \cdot 1+ \xi\Shift_0 \\
&+L_1+\cdots \\
&\\
L_0^{\d+A} 
&= \xi \Shift_{-1} \\
&+ (\frac12 (\varepsilon^2-\tau^+\tau^-)+\frac12 \xi C^{+-}_1)\cdot 1 + L_0 \\
&+\cdots
\end{align*}

In particular for vectors of top degree, the action of $L_2,L_1,L_0$ is completely determined by the shown terms. 

\subsection{Formulas in the formal case}

We now assume the connection is of the simplest diagonal form with no regular terms i.e. in  Birkhoff normal form
$$\d
+\begin{pmatrix} \xi & 0 \\0 & -\xi\end{pmatrix}/z^2
+\begin{pmatrix} \varepsilon & 0 \\0 & -\varepsilon\end{pmatrix}/z
$$
Formally, every irregular connection of Poisson order $1$ with diagonalizable irregular term is equivalent to a connection of this form. Analytically, this is only true if also the Stokes matrices are trivial. The differential equation has a basis of solutions
$$\begin{pmatrix} e^{\lambda/z}z^{-\varepsilon} \\ 0 \end{pmatrix},\quad
\begin{pmatrix} 0 \\ e^{-\lambda/z}z^\varepsilon  \end{pmatrix}$$

The formulas in the previous section simplify now as follows: The action of the positive modes in formula \eqref{formula_positiveModesActing} becomes

\begin{align}
  \label{formula_positiveModesActingDiagonal}
\psi_{m>0}^\pm 
\longmapsto 
\xi \frac{\partial}{\partial \psi_{-(m-1)}^\mp}
&+(\pm m+\varepsilon)\frac{\partial}{\partial \psi_{-m}^\mp}
\end{align}

The formula for $L_n^{\d+A}$ again follows from Lemma \ref{lm_Vir} and formula \eqref{formula_normord}, where the constant shift is $c_2=\frac{1}{2}\xi^2$ and $c_1=\frac{1}{2}\xi\varepsilon$ and $c_0=\frac{1}{2}\varepsilon^2$  and $c_k=0$ for $k\leq 0$. Altogether we get terms in degree $0$, $n-2$, $n-1$, $n$ as follows:

\begin{align}\label{formula_VirasoroDiagonal}
\begin{split}
L_n^{\d+A} 
&=c_n\cdot 1 \\
&+\xi^2\underbrace{\sum_{\substack{i+j=n\\i\geq 1,j\geq 1}}\frac{\partial}{\partial \psi^+_{-(i-1)}}\frac{\partial}{\partial \psi^-_{-(j-1)}}}_{\left(L_n^{\d+A}\right)_{\partial-1,\partial-1}} \\
&+\underbrace{\xi \sum_{\substack{i+j=n \\ i\leq 0,\;j\geq 1}} \psi_i^-\frac{\partial}{\partial \psi^-_{-(j-1)}}-\psi_i^+\frac{\partial}{\partial \psi^+_{-(j-1)}}}_{\left(L_n^{\d+A}\right)_{\psi,\partial-1}\,=\,\xi\,\Shift_{n-1}}
+\underbrace{\xi \sum_{\substack{i+j=n-1 \\ i\geq 0,\;j\geq0}} (j-i) \frac{\partial}{\partial \psi^+_{-i}}\frac{\partial}{\partial \psi^-_{-j}}}_{\left(L_n^{\d+A}\right)_{\partial-1,\partial}}
+\underbrace{2\xi\varepsilon \sum_{\substack{i+j=n-1 \\ i\geq 0,\;j\geq0}}\frac{\partial}{\partial \psi^+_{-i}}
\frac{\partial}{\partial \psi^-_{-j}}}_{\left(L_n^{\d+A}\right)_{\partial-1,\partial+0}} \\
&+L_n
+\underbrace{\varepsilon \sum_{\substack{i+j=n \\ i\leq 0,\;j\geq0}} \psi_i^-\frac{\partial}{\partial \psi^-_{-j}}-\psi_i^+\frac{\partial}{\partial \psi^+_{-j}}}_{\left(L_n^{\d+A}\right)_{\psi,\partial+0}\,=\,\varepsilon\,\Shift_{n}}
+\underbrace{\varepsilon^2 \sum_{\substack{i+j=n \\ i\geq 0,\;j\geq0}}\frac{\partial}{\partial \psi^+_{-i}}
\frac{\partial}{\partial \psi^-_{-j}}}_{\left(L_n^{\d+A}\right)_{\partial+0,\partial+0}}
\end{split}
\end{align}
where we again use the derivation $\Shift_{n-k}:\; \psi^\pm_j\mapsto \mp\psi^\pm_{j+(n-k)}$.

The terms without a distinguished name only appear for $n\geq 0$. We list the first cases:

\begin{lemma}\label{lm_VirasoroDiagonal}
For a connection in Birkhoff normal form we have the following explicit formulas. The linebreaks collect terms of degree $n-2$, $n-1$ and $n$: 
\begin{align*}
L_2^{\d+A} 
&=\frac12 \xi^2 \cdot 1+ \xi^2\frac{\partial}{\partial \psi^+_{0}}\frac{\partial}{\partial \psi^-_{0}} \\
&+\xi\,\Shift_1 
+
(-\xi+2\xi\varepsilon)
\frac{\partial}{\partial \psi^+_{-1}}\frac{\partial}{\partial \psi^-_{0}} 
+
(\xi+2\xi\varepsilon)
\frac{\partial}{\partial \psi^+_{0}}\frac{\partial}{\partial \psi^-_{-1}} \\
&+L_2
+\varepsilon\,\Shift_2
+\varepsilon^2 \sum_{\substack{i+j=2 \\ i\geq 0,\;j\geq0}}\frac{\partial}{\partial \psi^+_{-i}}
\frac{\partial}{\partial \psi^-_{-j}}\\
L_1^{\d+A} 
&=\frac12 \xi \varepsilon \cdot 1
+ \xi\,\Shift_0 
+ 2\xi\varepsilon\,
\frac{\partial}{\partial \psi^-_{0}}\frac{\partial}{\partial \psi^-_{0}} \\
&+L_1
+ \varepsilon\,\Shift_1
+\varepsilon^2 \sum_{\substack{i+j=2 \\ 1\geq 0,\;j\geq0}}\frac{\partial}{\partial \psi^+_{-i}}
\frac{\partial}{\partial \psi^-_{-j}}\\
L_0^{\d+A} 
&= \xi \,\Shift_{-1} \\
&+ L_0
+ \frac12 \varepsilon^2\cdot 1 
+\varepsilon\,\Shift_0
+\varepsilon^2 \frac{\partial}{\partial \psi^+_{0}}
\frac{\partial}{\partial \psi^-_{0}}
\\
L_n^{\d+A} 
&= \xi \,\Shift_{n-1} \\
&+L_n+\varepsilon\,\Shift_0, \qquad n<0\\
\end{align*}
\end{lemma}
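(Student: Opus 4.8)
The plan is to read Lemma~\ref{lm_VirasoroDiagonal} off the general Birkhoff-normal-form formula \eqref{formula_VirasoroDiagonal} by specialization. For each of $n=2,1,0$ and for a general $n<0$ I would substitute that value into \eqref{formula_VirasoroDiagonal} and evaluate each of its listed summands: once $n$ is fixed, every block is a sum over a finite, explicit set of index pairs $(i,j)$ (those satisfying the displayed range conditions), so one simply enumerates them and then collects the resulting monomials by degree. The only external inputs are \eqref{formula_VirasoroDiagonal} itself, the constants $c_2=\tfrac12\xi^2$, $c_1=\tfrac12\xi\varepsilon$, $c_0=\tfrac12\varepsilon^2$ and $c_n=0$ for $n<0$ from Lemma~\ref{lm_Vir}, and the antiderivation action \eqref{formula_positiveModesActingDiagonal} of the positive modes.

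Concretely I would first record which blocks contribute nothing for small $n$: $\left(L_n^{\d+A}\right)_{\partial-1,\partial-1}$ is empty for $n\le 1$; $\left(L_n^{\d+A}\right)_{\partial-1,\partial}$ is empty for $n\le 0$ and also vanishes at $n=1$, since there its only index pair $(0,0)$ carries weight $j-i=0$; $\left(L_n^{\d+A}\right)_{\partial-1,\partial+0}$ is empty for $n\le 0$; and $\left(L_n^{\d+A}\right)_{\partial+0,\partial+0}$ together with the constant $c_n$ vanish for $n<0$. This is exactly the assertion in the text that the unnamed terms occur only for $n\ge 0$, and it immediately yields $L_n^{\d+A}=\xi\,\Shift_{n-1}+L_n+\varepsilon\,\Shift_n$ for $n<0$. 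For $n=0$ and $n=1$ only the shift terms, the untwisted $L_n$, the constant, and at most one or two second-order-derivative monomials survive, and these are read off directly. The case $n=2$ is the only one with several surviving monomials: one writes out $\left(L_2^{\d+A}\right)_{\partial-1,\partial-1}$ (the pair $(1,1)$), the degree-$1$ block $\left(L_2^{\d+A}\right)_{\partial-1,\partial}$ (the pairs $(0,1)$ and $(1,0)$, weighted by $j-i=\pm1$) and $\left(L_2^{\d+A}\right)_{\partial-1,\partial+0}$ (the same two pairs, each weighted by $1$), adds the two degree-$1$ contributions, and uses $\left(L_2^{\d+A}\right)_{\psi,\partial-1}=\xi\,\Shift_1$ for the degree-$1$ shift.

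There is no real obstacle here; the proof is bookkeeping. The two points that need a little care are: the signs in the degree-$1$ part of $L_2^{\d+A}$, where it is the antisymmetry of $j-i$ under $(i,j)\leftrightarrow(j,i)$ that splits the coefficients into $\xi+2\xi\varepsilon$ of $\tfrac{\partial}{\partial\psi^+_0}\tfrac{\partial}{\partial\psi^-_{-1}}$ and $-\xi+2\xi\varepsilon$ of $\tfrac{\partial}{\partial\psi^+_{-1}}\tfrac{\partial}{\partial\psi^-_0}$; and the fact that in Birkhoff normal form the positive modes act by the short antiderivations $\xi\,\tfrac{\partial}{\partial\psi^\mp_{-(m-1)}}+(\pm m+\varepsilon)\,\tfrac{\partial}{\partial\psi^\mp_{-m}}$ of \eqref{formula_positiveModesActingDiagonal}, with no $\tau^\pm$ and no regular terms, so that the cross blocks $\left(L_n^{\d+A}\right)_{\psi,\partial-1}$ and $\left(L_n^{\d+A}\right)_{\psi,\partial+0}$ collapse exactly to $\xi\,\Shift_{n-1}$ and $\varepsilon\,\Shift_n$ as already recorded in \eqref{formula_VirasoroDiagonal}, rather than producing longer sums.
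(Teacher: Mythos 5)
Your proposal is correct and coincides with the paper's implicit proof: the lemma is presented as "we list the first cases" of \eqref{formula_VirasoroDiagonal}, so substituting $n=2,1,0$ and $n<0$ and enumerating the finite index sets is exactly what is intended, and your bookkeeping of which blocks are empty is right. One remark worth noting is that your (correct) derivation actually reveals a few typos in the lemma as printed: for $n<0$ the specialization gives $\varepsilon\,\Shift_{n}$ (as you found), not $\varepsilon\,\Shift_{0}$; in the $L_1^{\d+A}$ line the degree-$(n-1)$ term should read $2\xi\varepsilon\,\frac{\partial}{\partial\psi^+_0}\frac{\partial}{\partial\psi^-_0}$ rather than $2\xi\varepsilon\,\frac{\partial}{\partial\psi^-_0}\frac{\partial}{\partial\psi^-_0}$, and the $\varepsilon^2$-sum there should run over $i+j=1$ with $i\ge 0$, $j\ge 0$.
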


\subsection{Examples}\label{sec_Examples}

We now assume that $\d+A$ is in Birkhoff normal form and assume in addition  $\varepsilon=0$. Using the formulae \eqref{formula_VirasoroDiagonal} and the simplified formulae in  Lemma \ref{lm_VirasoroDiagonal} for $n\leq 2$ we compute and depict the first terms of the action on the (used-to-be-)familiar top vectors of the spaces of horizontal degree $k=0$ and $k=1$. Recall that  in Birkhoff normal form the horizontal grading is preserved.
\begin{itemize}
\item (Figure \ref{figure_ExampleVir1}). For the top vector $v$ all derivatives and shifts in formula  \eqref{formula_VirasoroDiagonal} are zero and only the constant term $c_0$ and the untwisted terms $L_n$ remain. The constant term turns $v$ into a a (slightly degenerate) Whittaker vector, a topic we study further in Section \ref{sec_VirasoroStructure}.
\begin{align*}
    L_n^{\d+A} v  &=0,\;n>2\\ 
    L_2^{\d+A} v &=\frac{1}{2}\xi^2 v\\ 
    L_1^{\d+A} v &=0\\ 
    L_0^{\d+A} v &=L_0 v =\psi^-_0\psi^+_0v \\
    L_{-1}^{\d+A} v &= L_{-1} v=\psi^-_{-1}\psi^+_0v +\psi^-_0\psi^+_{-1}v
\end{align*}
 Note that the induced module for $A=0$ is not the vacuum module, but an indecomposable projective module of symplectic fermions, see Example \ref{exm_regular}. So at this point we expect an irreducible module containing $v$ extended by modules containing $\psi_0^+,\psi_0^-$ and $\psi_0^-\psi^+$. In particular already in the untwisted case $L_0$ acts by a Jordan block and also  $L_{-1}$ act nonzero.
 
Now if we apply $L_0^{\d+A}$ again to $L_0^{\d+A} v$ then $L_0$ acts by zero, but we have one new term:
$$(L_0^{\d+A})^2 v=\xi\,\Shift_{-1}(\psi^-_0\psi^+_0)v
=\xi\left(\psi^-_{-1}\psi^+_0-\psi^-_0\psi^+_{-1}\right)v$$
This matches the expectation from Whittaker modules (that all $(L_0)^nv$ are linearly independent) and also provides some indication that the twisted action turns the indecomposable projective module in the untwisted case  (Example \ref{exm_regular}) becomes simple under the twisted action, as the $\psi^\pm_0v$ do not define submodules, not even for the Virasoro action.

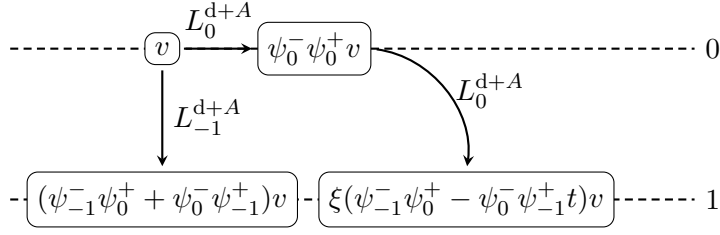
\begin{figure}\label{figure_ExampleVir1}
\centerline{
  %\resizebox{7cm}{!}{
    \begin{tikzpicture}[
      scale=0.5,
      degree/.style={thick,densely dashed},
      trans/.style={thick,->,shorten >=2pt,shorten <=2pt,>=stealth},
      roundnode/.style={fill=white,draw,rounded corners}
      %={circle, draw=green!60, fill=green!5, very thick, minimum size=7mm},
    ]
    \begin{scope}[xshift=0cm]
    % Degrees
    \newcommand{\from}{-4cm}
    \newcommand{\till}{14cm}
    \newcommand{\high}{4cm}
    \draw[degree] (\from,-0*\high) -- (\till,-0*\high) node[right] {$0$};
    \draw[degree] (\from,-1*\high) -- (\till,-1*\high) node[right] {$1$};
    % Elements
    \node[roundnode] (v) at (0,0) {$v$};   
    \node[roundnode] (logv) at (4,0) {$\psi_0^-\psi_0^+v$};  
    \node[roundnode] (Tv) at (0,-1*\high) {$(\psi^-_{-1}\psi^+_0+\psi^-_0\psi^+_{-1})v$};
    \node[roundnode] (LLv) at (8,-1*\high){$\xi(\psi^-_{-1}\psi^+_0-\psi^-_0\psi^+_{-1}t)v$};
    % Transitions
    \draw[trans] (v)--(logv) node[midway,above] {$L_0^{\d+A}$};
    \draw[trans] (v)--(Tv) node[midway,right] {$L_{-1}^{\d+A}$};
    \draw[trans] (logv) edge[bend left=45] node[midway,right] {$L_{0}^{\d+A}$} (LLv) ;

    \end{scope}
    
    \end{tikzpicture}
}
\caption{Some decendents of $v$ in the $(\d+A)$-twisted module. The curved arrows are the new terms.}
\end{figure}

\item (Figure \ref{figure_ExampleVir2}) Next we start with $\psi_0^{-}$. All double derivatives and positive shifts are zero, but in addition to the constant term and the untwisted terms $L_n$ we have $\Shift_k(\psi_0^{-})=\psi_{-k}^{-}$. Again we find a Whittaker vector, with different parameters 
\begin{align*}
    L_n^{\d+A} \psi_0^{-} v &=0,\;n>2\\ 
    L_2^{\d+A} \psi_0^{-} v &=\frac{1}{2}\xi^2 \,\psi_0^{-}v\\ 
    L_1^{\d+A} \psi_0^{-} v &= \xi \,\psi_0^{-}v\\ 
    L_0^{\d+A} \psi_0^{-} v &=\xi\,\psi_{-1}^{-}v + L_0 \psi_{0}^{-}v 
    =\xi\psi_{-1}^{-}v  \\
    L_{-1}^{\d+A} \psi_0^{-} v &= \xi\,\psi_{-2}^{-}v + L_{-1} \psi_{0}^{-}v
    =\xi\,\psi_{-2}^{-}v - \psi_{-1}^{-}\,\psi_{0}^{-}\psi_{0}^{+}v\\
    &\\
    (L_0^{\d+A})^2 \psi_0^{-} v&=\xi^2\psi_{-2}^-v + \xi\,L_0\psi_{-1}^{-}v
    =\xi^2\psi_{-2}^-v + \xi(\psi_{-1}^{-}+
    \psi_{-1}^{-}\,\psi_0^-\psi_0^+)v
\end{align*}
Again, we find indications that the twisted module is simple. 
\end{itemize}

\begin{figure}\label{figure_ExampleVir2}
\centerline{
  %\resizebox{7cm}{!}{
    \begin{tikzpicture}[
      scale=0.5,
      degree/.style={thick,densely dashed},
      trans/.style={thick,->,shorten >=2pt,shorten <=2pt,>=stealth},
      roundnode/.style={fill=white,draw,rounded corners}
      %={circle, draw=green!60, fill=green!5, very thick, minimum size=7mm},
    ]
    \begin{scope}[xshift=0cm]
    % Degrees
    \newcommand{\from}{-9cm}
    \newcommand{\till}{6cm}
    \newcommand{\high}{4cm}
    \draw[degree] (\from,-0*\high) -- (\till,-0*\high) node[right] {$0$};
    \draw[degree] (\from,-1*\high) -- (\till,-1*\high) node[right] {$1$};
    \draw[degree] (\from,-2*\high) -- (\till,-2*\high) node[right] {$2$};
    % Elements
    \node[roundnode] (v) at (0,0) {$\psi_0^{-1}$};   
    \node[roundnode] (Lv) at (-6,-1*\high) {$\xi\psi_{-1}^{-} $};
    \node[roundnode] (Tv) at (0,-1*\high){$-\psi^-_{-1}\,\psi_0^-\psi^+_0$};
    \node[roundnode] (defTv) at (0,-2*\high){$\xi \psi^-_{-2}$};
    
    % Transitions

    \draw[trans] (v) edge[bend right=35] node[midway,left] {$\;L_{0}^{\d+A}$} (Lv) ;
    \draw[trans] (v) -- (Tv) node[midway,right] (L) {$L_{-1}^{\d+A}$};
    \draw[trans] (L) edge[bend left=80] (defTv);
    \draw[trans] (Lv) -- (Tv) node[midway,below] (LL) {$L_0^{\d+A}$};
    \draw[trans] (LL) edge[bend right=40] (defTv);

    \end{scope}
    
    \end{tikzpicture}
}
\caption{Some decendents of $\psi_0^-$ in the $(\d+A)$-twisted module. The curved arrows are the new terms.}
\end{figure}
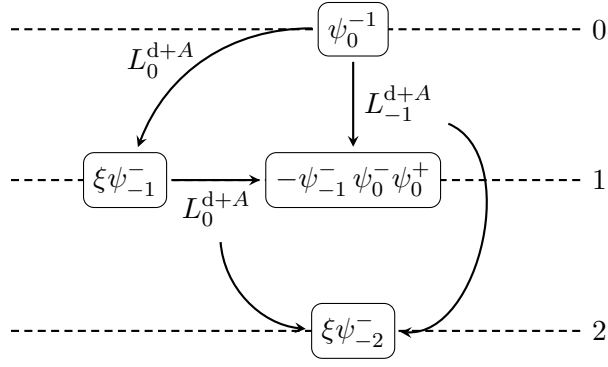

\section{Structure as a Virasoro module}\label{sec_VirasoroStructure}

\subsection{Preliminaries on Whittaker modules}

For Whittaker modules for the Virasoro algebra we refer the reader to
\cite{OW09}, for considerations regarding the category and extensions to \cite{OW13} and  in particular a treatment of higher order to \cite{LGZ11}.

We recall that, for the Virasoro algebra, for any choice of $1$-dimensional representation $\alpha:\Vir^{\geq 1}\to \C$ we define a \emph{Whittaker vector} $v_\alpha$ of type $\alpha$ by the condition 
$$L_nv_\alpha=a_n v_\alpha,\quad n\geq 1$$
for $a_n=\alpha(L_n)$. Differently said,  $v_\alpha$ spans the $1$-dimensional representation of $\Vir^{\geq 1}$. For the Virasoro algebra, the commutator relation forces $a_n=0$ for $n\geq 3$, and we denote shorthand $\alpha=(a_1,a_2)$.

Similar to Verma modules, there is a \emph{universal Whittaker module}
$$M_{(a_1,a_2)}=\Vir\otimes_{\Vir^{> 0}}\C_{(a_1,a_2)}
=\C[L_0,L_{-1},\ldots]v_{(a_1,a_2)}
$$

By \cite{LGZ11} Theorem 7 we have

\begin{theorem}\label{thm_WhittakerIrrep}
    The universal Whittaker module $M_{(a_1,a_2)}$ is irreducible for all values $(a_1,a_2)$ except $(0,0)$, which correspond to highest weight modules. 
\end{theorem}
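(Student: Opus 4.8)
\emph{On the proof of Theorem~\ref{thm_WhittakerIrrep} (quoted from \cite{LGZ11}).}
Fix the central charge. Since the Whittaker vector $v_{(a_1,a_2)}$ is cyclic, the plan is to prove that every nonzero $\Vir$-submodule $N\subseteq M_{(a_1,a_2)}$ contains $v_{(a_1,a_2)}$ whenever $(a_1,a_2)\neq(0,0)$. The value $(0,0)$ is the exception: there the assignment $x\mapsto xv_h$ factors through a surjection $M_{(0,0)}\twoheadrightarrow M(h)$ onto the Verma module of highest weight $h$ (because $L_nv_h=0$ for $n\geq 1$), and $(L_0-h)v_{(0,0)}$ lies in its kernel, so $M_{(0,0)}$ is reducible -- this is what ``correspond to highest weight modules'' refers to.

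So fix $(a_1,a_2)\neq(0,0)$ and $0\neq w\in N$, written in the PBW basis $\{L_0^{k_0}L_{-1}^{k_1}\cdots L_{-r}^{k_r}\,v_{(a_1,a_2)}\}$ of $M_{(a_1,a_2)}=\C[L_0,L_{-1},\ldots]v_{(a_1,a_2)}$. The first and main step is to produce, by applying suitable products of positive modes $L_n$ ($n\geq 1$) to $w$, a \emph{nonzero} element of $N$ lying in $\C[L_0]v_{(a_1,a_2)}$. For this I would use the filtration already exploited in Section~\ref{sec_VirasoroStructure}, in which $L_{-i}$ has degree $i+1$ for $i\geq 0$, and track how one mode $L_n$ moves through a monomial via $[L_n,L_{-j}]=(n+j)L_{n-j}$ up to a central term and $L_nv_{(a_1,a_2)}=a_nv_{(a_1,a_2)}$ (with $a_n=0$ for $n\geq 3$). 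Commuting $L_n$ to the right produces only terms of strictly smaller filtration degree, and $n$ can be chosen, in terms of the largest index $r$ present in $w$ and of whichever of $a_1,a_2$ is nonzero, so that the term of largest surviving degree carries a nonzero structure constant and therefore does not cancel; iterating strictly decreases the ``depth'' of $w$ and eventually lands in $\C[L_0]v_{(a_1,a_2)}$.

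It then remains to handle an element $q(L_0)v_{(a_1,a_2)}\in N$ with $0\neq q\in\C[x]$ of degree $d$. From $[L_1,L_0]=L_1$ and $[L_2,L_0]=2L_2$ one gets $L_1^m q(L_0)v_{(a_1,a_2)}=a_1^m\,q(L_0+m)v_{(a_1,a_2)}$ and $L_2^m q(L_0)v_{(a_1,a_2)}=a_2^m\,q(L_0+2m)v_{(a_1,a_2)}$. If $a_1\neq 0$, then $q(L_0+m)v_{(a_1,a_2)}\in N$ for all $m\geq 0$, and the $d$-fold finite difference $\sum_{l=0}^{d}(-1)^{d-l}\binom{d}{l}q(x+l)$ equals $d!$ times the leading coefficient of $q$, a nonzero constant, so $v_{(a_1,a_2)}\in N$. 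If $a_1=0$ (hence $a_2\neq 0$), the analogous difference with step $2$ equals $2^d\,d!$ times the leading coefficient of $q$, again nonzero, so again $v_{(a_1,a_2)}\in N$. In both cases $N=M_{(a_1,a_2)}$, proving irreducibility.

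The only genuinely delicate part is the first step: making the leading-term bookkeeping under the Virasoro relations precise. The subtlety is that the brackets $[L_n,L_{-j}]$ with $n-j\in\{1,2\}$ feed into the $a_1$- and $a_2$-terms and may land at filtration degrees competing with the depth-reducing term, so the induction must be organised with some care (for instance lexicographically by depth and then by the multiplicity of the deepest mode). This is exactly the computation carried out in \cite{LGZ11}, to which we refer for the details.
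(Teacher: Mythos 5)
Your proposal is correct and takes the same route as the paper, which simply invokes \cite{LGZ11} Theorem~7 with no proof of its own; your sketch of the LGZ11 argument (extracting an element of $\C[L_0]v_{(a_1,a_2)}$ from any nonzero submodule, then using the $L_1$- or $L_2$-shift $L_i^m q(L_0)v = a_i^m q(L_0+im)v$ together with the $d$-th finite difference to recover $v_{(a_1,a_2)}$) and your account of the degenerate case $(0,0)$ are all accurate. You are also right that the first step is the only delicate part and that it is exactly what LGZ11 carry out.
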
 Note that the statements in \cites{OW09,OW13} do not cover the degenerate case $(a_1,a_2)=(0,\frac{1}{2}\xi^2)$ which we encounter in Corollary \ref{cor_WhittakerVectors} below for $k=0$.

\bigskip
 Note that \cite{OW13} show the possibility of extensions between Whittaker modules, if the Whittaker parameters are equal. It would be interesting to find situations where these extensions appear, for example maybe if $A_1$ is nilpotent.

\subsection{Whittaker vectors}\label{sec_Whittaker}

As a consequence of the general formulas for $L_n^{\d+A}$ for $n\geq 0$ in Section \ref{sec_Formulas}, we find as in the example in Section \ref{sec_Examples}.

\begin{corollary}\label{cor_WhittakerVectors}
    Assume that all $A_k$ are diagonal, for example the connection is in Birkhoff normal form, so the horizontal grading is preserved. Then the following vectors (which are the highest-weight vectors for the untwisted Virasoro action)
    \begin{align*}
        v_{k}&=\psi_{-k}^-\psi^-_{-(k-1)}\cdots \psi^-_0 \\
        v_0&=v\\
        v_{-k}&=\psi^+_{-k}\psi^+_{-(k-1)}\cdots \psi^+_0 
    \end{align*}    
    become for the twisted Virasoro action Whittaker vectors
    \begin{align*}
        L_{n\geq 3}^{\d+A} v_k &=0   \\ 
        L_2^{\d+A} v_k
        &=\frac12 \xi^2 \cdot v_k \\
        L_1^{\d+A} v_k 
        &=\xi(k+\frac12 \varepsilon)\cdot v_k
    \end{align*}
\end{corollary}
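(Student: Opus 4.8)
The plan is to compute $L_n^{\d+A}v_k$ for $n\ge 1$ by substituting the explicit description of the twisted Virasoro generators in Birkhoff normal form (Lemma~\ref{lm_VirasoroDiagonal}, or the full list \eqref{formula_VirasoroDiagonal}) directly into the monomials $v_k$, exploiting three features of these vectors. For $k\ge 0$ the vector $v_k=\psi^-_{-(k-1)}\cdots\psi^-_0$ (with $v_0=1$, as in the introduction) involves only the modes $\psi^-_0,\dots,\psi^-_{-(k-1)}$ and no $\psi^+$-mode at all; in Birkhoff normal form the $\psi^-$-modes anticommute among themselves, so each $\psi^-_{-l}$ squares to zero; and $v_k$ is the top vector of a Fock module for the untwisted symplectic fermions (Example~\ref{exm_regular}), hence a highest-weight vector for the untwisted Virasoro action. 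The vectors $v_{-k}$ for $k>0$ are the mirror-image monomials in the $\psi^+$-modes and are handled identically.

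First I would prune the list \eqref{formula_VirasoroDiagonal}. Every summand of $L_n^{\d+A}$ occurring there, apart from the constant $c_n\cdot 1$, the untwisted part $L_n$, and the two shift terms $\xi\,\Shift_{n-1}$ and $\varepsilon\,\Shift_n$, is a constant multiple of a product of two derivations, one of which differentiates with respect to some $\psi^+$-mode; since $v_k$ ($k\ge 0$) contains no $\psi^+$-factor, all of these annihilate $v_k$, as does the ``$\psi^+\,\partial/\partial\psi^+$'' half of each of the two shift terms. This leaves $L_n^{\d+A}v_k = c_n\,v_k + L_n v_k + \xi\,\Shift_{n-1}v_k + \varepsilon\,\Shift_n v_k$, and on the monomial $v_k$ the operator $\Shift_m$ reduces, after discarding its $\psi^+$-part, to $\sum_{m\le l\le k-1}\psi^-_{-(l-m)}\,\partial/\partial\psi^-_{-l}$.

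Next I would dispose of the three nontrivial pieces. Since $v_k$ is a highest-weight vector for the untwisted Virasoro action, $L_n v_k=0$ for all $n\ge 1$ (the standard Fock-module computation, re-derivable term by term from the untwisted specialization of \eqref{formula_positiveModesActingDiagonal}). For the shifts, $\Shift_0 v_k = k\,v_k$, because on this monomial $\Shift_0$ is just the $\psi^-$-number operator and $v_k$ has exactly $k$ factors; and $\Shift_m v_k = 0$ for every $m\ge 1$, because $\Shift_m$ replaces a factor $\psi^-_{-l}$ (with $m\le l\le k-1$, hence $0\le l-m\le k-2$) by $\psi^-_{-(l-m)}$, whose index is still present among the remaining factors of $v_k$, so the resulting monomial contains $(\psi^-_{-(l-m)})^2=0$.

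Assembling these with Lemma~\ref{lm_VirasoroDiagonal}: for $n\ge 3$ only $\xi\,\Shift_{n-1}$ and $\varepsilon\,\Shift_n$ can survive among the twisted terms and both vanish, while $c_n=0$ and $L_n v_k=0$, so $L_n^{\d+A}v_k=0$; for $n=2$ we keep $c_2=\tfrac12\xi^2$, and $L_2 v_k=\Shift_1 v_k=\Shift_2 v_k=0$, so $L_2^{\d+A}v_k=\tfrac12\xi^2 v_k$; for $n=1$ we keep $c_1=\tfrac12\xi\varepsilon$ together with $\xi\,\Shift_0 v_k=\xi k\,v_k$, while $L_1 v_k=\Shift_1 v_k=0$, so $L_1^{\d+A}v_k=\xi\bigl(k+\tfrac12\varepsilon\bigr)v_k$. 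The case $k<0$ is identical with $\psi^+\leftrightarrow\psi^-$; there $\Shift_0$ acts by $-|k|=k$, which is precisely why the single formula $L_1^{\d+A}v_k=\xi(k+\tfrac12\varepsilon)v_k$ holds uniformly for all $k\in\Z$. I do not expect a genuine obstacle: the whole argument is substitution into Lemma~\ref{lm_VirasoroDiagonal}. The one place requiring care is the range-and-sign bookkeeping for the operators $\Shift_m$ together with the vanishing $(\psi^\pm_l)^2=0$, which is what forces $\Shift_m v_k=0$ for all $m\ge 1$; a minor but essential point is that one must use the normalization in which $v_k$ is a product of exactly $k$ modes, since that count is the coefficient $k$ appearing in $L_1^{\d+A}v_k$.
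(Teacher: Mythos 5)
Your proof is correct and takes essentially the same approach the paper intends: the paper's own ``proof'' is the single sentence preceding the corollary, which just says the claim follows from the explicit formulas of Section 4.3--4.4, and your argument is exactly that substitution carried out carefully. The three observations you isolate are precisely the ones that make the substitution go through: (i) every summand of $L_n^{\d+A}$ in formula \eqref{formula_VirasoroDiagonal} other than $c_n$, $L_n$, and the two shift terms contains a $\partial/\partial\psi^+$-factor, so it annihilates a pure $\psi^-$-monomial; (ii) $L_n v_k = 0$ for $n\ge 1$ since $v_k$ is the untwisted Fock highest-weight vector; (iii) $\Shift_m v_k = 0$ for $m\ge 1$ because the shifted index collides with one already present in the monomial and $(\psi^-_{-l})^2=0$ in the diagonal case, while $\Shift_0$ is the signed $\psi^\mp$-number operator. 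You also correctly identified and resolved a genuine discrepancy in the source: the corollary as displayed writes $v_k = \psi^-_{-k}\cdots\psi^-_0$ ($k+1$ factors), which is inconsistent both with the indexing in the paper's introduction ($v_k=\psi^-_{-(k-1)}\cdots\psi^-_0$, $k$ factors) and with the worked example $L_1^{\d+A}\psi_0^-v = \xi\psi_0^-v$ (forcing $\psi_0^- = v_1$); your choice of the $k$-factor normalization is the right one, and is exactly what produces the coefficient $k$ in $L_1^{\d+A}v_k = \xi(k+\tfrac12\varepsilon)v_k$.
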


In the general case, acting with $L_{n}^{\d+A},\; n\geq 1$ on $v_k$ produces additional terms in lower vertical degree, possibly in different horizontal degree than $k$. Since the degree is bounded from below, there exist finite sums starting with $v_k$, which are simultaneous eigenvectors.

\begin{corollary}\label{cor_WhittakerVectorsII}
In general, there exists for every $k\in \Z$ a Whittaker vector $w_k$ equal to $v_k$ above plus terms in lower degree, such that:
    \begin{align*}
        L_{n\geq 3}^{\d+A} w_k &=0   \\ 
        L_2^{\d+A} w_k
        &=\frac12 \xi^2 \cdot w_k \\
        L_1^{\d+A} w_k 
        &=\xi(k+\frac12 \varepsilon)\cdot w_k
    \end{align*}
\end{corollary}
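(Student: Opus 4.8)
The plan is to bootstrap from Corollary \ref{cor_WhittakerVectors} (the Birkhoff normal form case) by a degree-lowering correction argument, exploiting the grading structure laid out in Section \ref{sec_Formulas}. First I would fix $k\in\Z$ and recall the relevant grading: the \emph{vertical degree} $\deg_{vert}(\psi^\pm_{-i})=i$ makes $\SF_{\d+A}^{\le 0}$ non-negatively graded, with $v_k$ sitting in vertical degree $k(k+1)/2$ (for $k\ge 0$; symmetrically for $k<0$). The key structural input is that in the general formula for $L_n^{\d+A}$ from Section \ref{sec_Formulas}, the terms can be organized by the degree shift they induce: the term $(L_n^{\d+A})_{\partial-1,\partial-1}$ raises vertical degree by $-n+1$, the ``leading'' terms $(L_n)_{\psi,\psi}+(L_n)_{\psi,\partial}+(L_n)_{\partial,\partial}$ together with $\xi\,\Shift_{n-1}$ and the $\varepsilon,\tau^\pm$ terms raise it by at most $-n$, and the regular terms $C_k^{\pm\pm},C_k^{\pm\mp}$ with $k<0$ raise it by strictly less (i.e.\ $-n+k<-n$). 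When $n\ge 1$ this means every term strictly lowers vertical degree except the ``top'' part, and crucially, the \emph{highest}-degree part of $L_n^{\d+A}$ acting on the top-degree part of any vector coincides with the Birkhoff-normal-form operator from Corollary \ref{cor_WhittakerVectors} — because the genuinely degree-nondecreasing contributions in the general formula are exactly the diagonal ones, the off-diagonal $A_k$-contributions with $k\le 0$ all being strictly degree-lowering.

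Next I would set up the correction as a formal solution of a triangular linear system. Write the desired Whittaker vector as $w_k=v_k+\sum_{d\ge 1} w_k^{(d)}$ where $w_k^{(d)}$ has vertical degree equal to $\deg_{vert}(v_k)-d$ (only finitely many $d$ contribute since the module is non-negatively graded, so the sum terminates). The conditions $L_1^{\d+A}w_k=\xi(k+\tfrac12\varepsilon)w_k$ and $L_2^{\d+A}w_k=\tfrac12\xi^2 w_k$ (and $L_{n\ge3}^{\d+A}w_k=0$, which I would handle simultaneously or deduce afterward from the Virasoro relations since $[L_1^{\d+A},L_2^{\d+A}]=-L_3^{\d+A}$ etc., so the eigenvalue equations for $L_1,L_2$ force the others) become, degree by degree, equations of the form
\begin{align*}
\big(L_n^{\d+A}\big)_{\mathrm{top}}\,w_k^{(d)} - (\text{eigenvalue})\,w_k^{(d)} = -\sum_{d'<d}\big(\text{lower-degree part of }L_n^{\d+A}\big)\,w_k^{(d')},
\end{align*}
where the right-hand side involves only the already-constructed $w_k^{(d')}$ with $d'<d$ (and $w_k^{(0)}=v_k$, for which the right-hand side is the explicitly computable ``error'' coming from the non-Birkhoff terms). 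The operator on the left, $(L_n^{\d+A})_{\mathrm{top}}$, is the Birkhoff operator; by Corollary \ref{cor_WhittakerVectors} it has $v_k$ as an eigenvector, but here I need to \emph{invert} the operator ``$(L_n^{\d+A})_{\mathrm{top}}-\text{eigenvalue}$'' on a complementary graded piece. I would argue this is possible because $\xi\,\Shift_{n-1}$ (the dominant degree-raising piece, $n\ge1$) acts, on the relevant homogeneous components, essentially like an injective shift/multiplication operator — more precisely on $\SF^{\le 0}$ the family $\Shift_{n-1}$ is invertible in an appropriate sense, as the paper itself notes in Section \ref{sec_VirasoroStructure} (their discussion of power sums acting on alternating polynomials), so one can solve recursively for $w_k^{(d)}$.

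The main obstacle, and the step I expect to require real care, is showing that the triangular system is genuinely solvable at every degree — i.e.\ that the leading operator ``$(L_n^{\d+A})_{\mathrm{top}}$ minus its eigenvalue on $v_k$'' is surjective onto the relevant graded subspace when restricted away from $v_k$, and that the solutions for $n=1$ and $n=2$ can be chosen \emph{consistently} (the same $w_k$ works for both, and hence for all $L_n^{\d+A}$). There are two ways I might finesse this. One is to note that it suffices to solve the $L_1^{\d+A}$-equation alone: if $w_k$ satisfies $L_1^{\d+A}w_k=\xi(k+\tfrac12\varepsilon)w_k$ and also $L_2^{\d+A}w_k = \tfrac12\xi^2 w_k + (\text{lower degree})$, then applying $[L_1^{\d+A},L_2^{\d+A}]=-L_3^{\d+A}$ and induction on degree one can force the lower-degree defect in the $L_2$-equation to vanish — but this needs the $L_1^{\d+A}$-action to be ``non-degenerate'' on the span of descendants, which again comes back to invertibility of $\Shift_0$. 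The cleaner route, and the one I would actually pursue, is to invoke the character/dimension-count machinery the paper sets up later: once one knows (from Corollary \ref{cor_WhittakerVectorsII}'s eventual use in Theorem \ref{thm_VirasoroStructure}, or independently via Theorem \ref{thm_WhittakerIrrep}) that the Whittaker submodule generated by $v_k+\cdots$ has the ``right size,'' existence of the correction follows from a dimension count degree by degree. For a self-contained proof at this point in the paper, though, I would present the explicit recursion and isolate the one lemma needed: that the homogeneous pieces of $\Shift_{n-1}$ (equivalently, multiplication by power sums on the ring of alternating polynomials in the $\psi$-variables) are injective on the complement of the line $\C v_k$, which is elementary and which the authors essentially state in Section \ref{sec_constructive}.
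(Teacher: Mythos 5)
Your plan — bootstrap from Corollary \ref{cor_WhittakerVectors} by a degree-lowering correction, organizing the equations as a triangular system in vertical degree — is the right shape, and it matches the one-line reasoning the paper offers just before the statement. But the specific inversion lemma you propose is not the one that closes the recursion, and following your recipe literally stalls at $n=2$. The operator to invert at each degree is the degree-\emph{preserving} part of $L_n^{\d+A}-a_n$ restricted to vertical degree $<\deg_{vert}(v_k)$. For $n=1$ this is $\xi(\Shift_0-k)$ (plus a commuting nilpotent $2\xi\varepsilon\,\frac{\partial}{\partial\psi_0^+}\frac{\partial}{\partial\psi_0^-}$); but $\Shift_0$ is \emph{diagonal}, not a shift — it multiplies a monomial of horizontal degree $l$ by $l$, so $\xi(\Shift_0-k)$ vanishes exactly in horizontal degree $k$. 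What actually closes the recursion is the combinatorial fact that $\SF_{\d+A}^{\le 0}$ contains \emph{no} monomials of horizontal degree $k$ with vertical degree strictly below $\deg_{vert}(v_k)=\binom{|k|}{2}$, so $\xi(\Shift_0-k)$ is invertible on all lower vertical degree; this is not ``injectivity of shift operators.'' For $n=2$ the degree-preserving part of $L_2^{\d+A}-a_2$ is $\xi^2\,\frac{\partial}{\partial\psi_0^+}\frac{\partial}{\partial\psi_0^-}$, which is nilpotent and hence not invertible; and $\xi\Shift_{n-1}$ is never the degree-preserving part of $L_n^{\d+A}$ for $n\ge 2$, since it lowers vertical degree by $n-1\ge 1$. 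So the lemma you isolate at the end is misidentified, and the power-sum discussion in Section \ref{sec_constructive} concerns surjectivity for Theorem \ref{thm_VirasoroStructure}, not this step.

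Your fallback — solve only the $L_1^{\d+A}$-equation and deduce the rest — is exactly what repairs this, and it can be made rigorous without appealing to the (circular) character-count route. Once $L_1^{\d+A}w_k=a_1 w_k$ holds exactly, $w_k$ lies in the generalized $a_1$-eigenspace $U[k]$ of $L_1^{\d+A}$ on the finite-dimensional space $V_{\le\deg_{vert}(v_k)}$; by the same combinatorial fact $U[k]$ is one-dimensional for $k\neq 0$ (and for $|k|\le 1$ there is no room for a correction, so $v_k$ itself is already a Whittaker vector by direct inspection of the formulas). Since the image of $\Vir^{\ge 1}$ in $\End\big(V_{\le\deg_{vert}(v_k)}\big)$ is a nilpotent Lie algebra, $U[k]$ is invariant under every $L_n^{\d+A}$, so each acts on $w_k$ by a scalar, and comparing leading terms in vertical degree pins those scalars to $\tfrac12\xi^2$ for $n=2$ and $0$ for $n\ge 3$. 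Your sketch of this step via ``$[L_1^{\d+A},L_2^{\d+A}]=-L_3^{\d+A}$ and induction'' gestures at the right idea but, as written, neither establishes the invariance of $U[k]$ nor the one-dimensionality that actually carry the argument.
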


\subsection{A character identity}

Our expectation that the twisted action turns the induced module (for $A=0$ the indecomposable projective module) into a Whittaker module suggests a nontrivial character identity, as discussed in the introduction. This identity can be proven directly and can be used to prove the isomorphism in the next section:

\begin{lemma}\label{lm_characterIdentity}
We have the following identities of $q$-series for all $k$ and all $M,N$
\begin{align}\label{formula_qVand}
&q^{k(k-1)/2} {M+N \choose M-k}_q\\
&=\sum_{x\geq 0} q^{(k+x)(k+x-1)/2} {M \choose k+x}_q
\cdot q^{x(x-1)/2} {N \choose x}_q \cdot q^x
\end{align}
as well as 
\begin{align}
&q^{k(k-1)/2} {M+N-1 \choose M-k}_q
+q^{(k+1)k/2} {M+N-1 \choose M-k-1}_q\\
&=\sum_{x\geq 0} q^{(k+x)(k+x-1)/2} {M \choose k+x}_q
\cdot q^{x(x-1)/2} {N \choose x}_q 
\end{align}
\end{lemma}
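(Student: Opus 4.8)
The plan is to recognize both identities as instances of the $q$-Vandermonde (Chu--Vandermonde) convolution, after the usual rewriting of Gau\ss{} binomial coefficients. Recall the $q$-Vandermonde identity in the form
\begin{equation*}
{M+N \choose r}_q=\sum_{j\geq 0} q^{j(r-j) + \binom{?}{?}}\,{M\choose j}_q {N\choose r-j}_q,
\end{equation*}
more precisely, in the normalization that tracks with the quadratic $q$-powers appearing here,
\begin{equation*}
{M+N\choose r}_q=\sum_{j}{M\choose j}_q{N\choose r-j}_q\,q^{(M-j)(r-j)}.
\end{equation*}
The idea is to substitute $r=M-k$ in the first identity \eqref{formula_qVand}, set $j=k+x$ so that $r-j=M-k-(k+x)=\cdots$ — here I would need to be a little careful, because the right-hand side only sums over one new index $x$, which means one of the two binomials must be $\binom{N}{x}_q$ and the other $\binom{M}{k+x}_q$. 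So the correct bookkeeping is: use $\binom{M+N}{M-k}_q=\binom{M+N}{N+k}_q$ and expand by $q$-Vandermonde splitting $M+N$ into $M$ and $N$, picking $N+k-x$ from the first factor... Rather than guess the split, I would fix the exact form of $q$-Vandermonde I need by matching the three quadratic exponents $k(k-1)/2$, $(k+x)(k+x-1)/2$, $x(x-1)/2$ and the linear term $q^x$; the combination
\begin{equation*}
\binom{k+x}{2}+\binom{x}{2}+x-\binom{k}{2}=\frac{(k+x)(k+x-1)+x(x-1)+2x-k(k-1)}{2}=x(k+x)
\end{equation*}
(a short computation) is exactly $x\cdot(k+x)$, which is the ``crossing number'' $j\cdot(r-j)$-type weight that appears in $q$-Vandermonde. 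So identity \eqref{formula_qVand} is literally $q$-Vandermonde with $r$ chosen appropriately and $j=x$, $r-j=k+x$ or similar, once one reads off that $(M-k)-(k+x) = $ the complementary index. I expect this matching to go through cleanly.

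For the second identity, I would first apply the $q$-Pascal rule to the left-hand side:
\begin{equation*}
\binom{M+N}{M-k}_q = q^{M-k}\binom{M+N-1}{M-k}_q+\binom{M+N-1}{M-k-1}_q
\end{equation*}
(or the variant with the $q$-power on the other term). Multiplying the first identity \eqref{formula_qVand} by the appropriate factors and comparing, one should be able to reorganize the single-sum right-hand side of \eqref{formula_qVand} as a sum of two pieces matching $q^{k(k-1)/2}\binom{M+N-1}{M-k}_q$ and $q^{(k+1)k/2}\binom{M+N-1}{M-k-1}_q$. Concretely: in the sum on the right of \eqref{formula_qVand}, split $\binom{N}{x}_q\,q^x$ using $q$-Pascal $\binom{N}{x}_q q^x = \binom{N}{x}_q q^x$ — rather, use $\binom{N}{x}_q = q^{x}\binom{N-1}{x}_q + \binom{N-1}{x-1}_q$, which turns $\binom{N}{x}_q q^x$ into $q^{2x}\binom{N-1}{x}_q + q^x\binom{N-1}{x-1}_q$; the first group reassembles (via \eqref{formula_qVand} with $N\to N-1$) into $q^{k(k-1)/2}\binom{M+N-1}{M-k}_q$ after re-examining the exponent, and the second group, after shifting $x\to x+1$ and $k\to k+1$ in \eqref{formula_qVand}, reassembles into $q^{(k+1)k/2}\binom{M+N-1}{M-k-1}_q$. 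The bookkeeping of the quadratic exponents under the shift $x\mapsto x+1$, $k\mapsto k+1$ is the one place where signs and the linear $q^x$ factor must be tracked with care.

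An equivalent and perhaps cleaner route, which I would mention as an alternative, is a direct combinatorial proof: both sides count (weighted by $q^{\text{size}}$) pairs of partitions inside appropriate boxes. The right-hand side manifestly parametrizes pairs of a partition with at most $k+x$ parts each of size at most $M-k-x$ (or the transpose convention) and a partition with at most $x$ parts of size at most $N-x$, the extra $q^x$ and the quadratic prefactors recording a staircase; the left-hand side counts partitions in an $(M-k)\times(N+k)$ box with a staircase shift. A bijection gluing the two small partitions along a common staircase, summed over the ``width'' $x$ of the overlap, gives the identity. In the paper this is exactly the content of the promised Lemma~\ref{lm_constructive}, so here I would be content to invoke $q$-Vandermonde and $q$-Pascal. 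The main obstacle is purely notational: pinning down which of the several equivalent normalizations of the $q$-Vandermonde identity matches the precise quadratic exponents $\binom{k}{2},\binom{k+x}{2},\binom{x}{2}$ and the stray $q^x$, and making sure the index shift in the second identity lands the boundary term ($x=0$) correctly; once the exponent identity $\binom{k+x}{2}+\binom{x}{2}+x-\binom{k}{2}=x(k+x)$ is in hand, everything else is routine.
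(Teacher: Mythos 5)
For the first identity your plan is essentially the paper's proof. You pin down the exact normalization of $q$-Vandermonde needed and verify the required quadratic exponent identity
$$\binom{k+x}{2}+\binom{x}{2}+x-\binom{k}{2}=x(k+x),$$
which is precisely the check carried out in the paper (with $J=x$, $K=M-k$ in its $q$-Vandermonde convention). That part goes through cleanly.

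For the second identity, however, there is a genuine gap in your bookkeeping. You start from the right-hand side of the \emph{first} identity, i.e.\ the sum containing the extra factor $q^x$, and apply $q$-Pascal to $\binom{N}{x}_q q^x$, obtaining $q^{2x}\binom{N-1}{x}_q+q^x\binom{N-1}{x-1}_q$. You then claim the first group reassembles into the first identity with $N\to N-1$; but that identity's summand carries $q^x$, not $q^{2x}$, so the exponents do not match. Likewise for the second group: after shifting $x\to y+1$ you get $q^{(y+1)y/2}q^{y+1}\binom{N-1}{y}_q$, whereas the first identity with $k\to k+1$, $N\to N-1$ requires $q^{y(y-1)/2}q^y\binom{N-1}{y}_q$; the discrepancy is a stray factor $q^{y+1}$, so this group does not reassemble either. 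The underlying issue is that the second identity's right-hand side has \emph{no} trailing $q^x$, and that is what must be decomposed. The paper applies $q$-Pascal directly to $\binom{N}{x}_q=q^x\binom{N-1}{x}_q+\binom{N-1}{x-1}_q$ inside the second identity's sum (without any extra $q^x$): then the first piece acquires exactly the $q^x$ needed to match the first identity's right-hand side with $N\to N-1$, giving $q^{k(k-1)/2}\binom{M+N-1}{M-k}_q$, while the second piece is handled by a second, independent application of $q$-Vandermonde with $J=x-1$, $K=M-k-1$, giving $q^{(k+1)k/2}\binom{M+N-1}{M-k-1}_q$. Your alternative combinatorial sketch (gluing pairs of restricted partitions along a staircase, summed over the overlap width $x$) is indeed the content of the paper's Lemma~\ref{lm_constructive} and would also work, but as written your algebraic route for the second identity does not close.
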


\begin{proof}
All claims follow essentially from  the $q$-Vandermond identity, see \cite{And98} Theorem~3.4, and reflection:
$${M+N \choose K}_{q} =\sum_{J} {M \choose K-J}_q {N \choose J}_q q^{J(M-K+J)}
 =\sum_{J} {M \choose M-K+J}_q {N \choose J}_q q^{J(M-K+J)}
$$
For the first assertion, we directly take $J=x$ and $K=M-k$ and convince ourselves that the $q$-powers are as intended $q^{J(M-K+J)}=q^{x(k+x)}$ and another term independent of~$x$:
\begin{align*}
q^{(k+x)(k+x-1)/2}q^{x(x-1)/2}q^x
&=q^{x(k+x)}q^{k(k-1)/2}
\end{align*}
For the second assertion, we first use the $q$-Pascal identity, see \cite{And98} Theorem 3.2, so the right-hand side reads
%$${N \choose x}_q=q^x {N-1 \choose x}_q+{N-1 \choose x-1}_q$$
\begin{align*}
&\sum_{x\geq 0} q^{(k+x)(k+x-1)/2} {M \choose k+x}_q
\cdot q^{x(x-1)/2} q^x {N-1 \choose x}_q\\
+& \sum_{x\geq 0} q^{(k+x)(k+x-1)/2} {M \choose k+x}_q
\cdot q^{x(x-1)/2} {N-1 \choose x-1}_q
\end{align*}
Then on both terms we can apply the $q$-Vandermonde identity, 
with $N-1$ replacing $N$, and once with $J=x$ and $K=M-k$ and once with $J=x-1$ and $K=M-k-1$, and again we convince ourselves that the $q$-powers are as intended in both cases with $q^{J(M-K+J)}=q^{x(k+x)}$ respectively $q^{J(M-K+J)}=q^{(x-1)(k+x)}$:
\begin{align*}
q^{(k+x)(k+x-1)/2}q^{x(x-1)/2}q^x
&=q^{x(k+x)}q^{k(k-1)/2} \\
q^{(k+x)(k+x-1)/2}q^{x(x-1)/2}
&=q^{(x-1)(k+x)}q^{(k+1)k/2} 
\end{align*} 
\end{proof}

We can interpret ${M+N \choose M}_q=\frac{[M+N]_q!}{[M]_q![N]_q!}$ as the $q$-series where the coefficient of $q^n$ is the cardinality of the set $P(n,M,N)$ of partitions of $n$ into at most $M$ summands with each summand at most $N$, see \cite{And98} Theorem 3.1. We write such a partition $X_1^{m_1}\cdots X_{N}^{m_N}$ with $\sum m_i\leq M$. We now give in this context an interpretation of the $q$-Vandermonde identity \eqref{formula_qVand}, which is probably known:

\begin{lemma}\label{lm_constructive}
Let $X=X_1^{m_1}\cdots X_{k+N}^{m_{k+N}}$ be a restricted partition in $P(n,M-k,N+k)$. Let $x\leq N$ be the largest integer, such that 
\begin{align}\label{formula_xdefinable}
\sum_{i\geq k+x} m_i \geq x
\end{align}
Then $X$ can be uniquely decomposed into a partition $X_1^{m_1'}\cdots X_{k+x}^{m_{k+x}'}$ in  $P(n',M-k-x,k+x)$ and a partition  \smash{$X_{k+x}^{m_{k+x}''}\cdots X_{k+N}^{m_{k+N}''}$} of $n''$ with precisely $x$ parts. The latter can be be reinterpreted, by shifting the index by $k+x$ (possibly to zero), as a partition in $P(n''-x(k+x),x,N-x)$. 
\end{lemma}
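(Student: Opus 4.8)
The plan is to show that the decomposition in the statement is \emph{forced} by the stated constraints, so that existence and uniqueness come together, and then to run the construction backwards to obtain a bijection of partition sets, from which \eqref{formula_qVand} follows by bookkeeping of sizes. First I would check that $x$ is well-defined: condition \eqref{formula_xdefinable} holds trivially at $x=0$, and it is downward closed in $x$, since if $\sum_{i\ge k+x}m_i\ge x$ then $\sum_{i\ge k+x-1}m_i=m_{k+x-1}+\sum_{i\ge k+x}m_i\ge x\ge x-1$; moreover, because every part of $X$ is at most $k+N$, the left side of \eqref{formula_xdefinable} is $0$ once $k+x>k+N$, so the largest admissible $x$ exists and automatically lies in $\{0,\dots,N\}$.

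Next I would observe that the decomposition is forced. A part of size $>k+x$ cannot occur in a partition all of whose parts are $\le k+x$, so it must belong to the second factor; a part of size $<k+x$ cannot occur in a partition all of whose parts are $\ge k+x$, so it must belong to the first; and the $m_{k+x}$ parts of size exactly $k+x$ must be distributed so that the second factor has precisely $x$ parts. This forces $m_i'=m_i$ for $i<k+x$, and $m_i'=0$, $m_i''=m_i$ for $i>k+x$, and $m_{k+x}''=x-\sum_{i>k+x}m_i$, and $m_{k+x}'=\sum_{i\ge k+x}m_i-x$. The requirements $m_{k+x}'\ge 0$ and $m_{k+x}''\ge 0$ are exactly \eqref{formula_xdefinable} at the chosen $x$ and, respectively, $\sum_{i>k+x}m_i\le x$; the latter holds by maximality of $x$ (if $x<N$ then \eqref{formula_xdefinable} fails at $x+1$, forcing $\sum_{i\ge k+x+1}m_i\le x$; if $x=N$ that sum is empty). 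The first factor then has $\sum_i m_i'=\sum_i m_i-x\le(M-k)-x$ parts, each $\le k+x$, so it lies in $P(n',M-k-x,k+x)$; the second factor has exactly $x$ parts of sizes in $\{k+x,\dots,k+N\}$, and lowering each of those $x$ indices by $k+x$ turns it into a partition of $n''-x(k+x)$ with at most $x$ (possibly zero) parts, each $\le N-x$, i.e.\ an element of $P(n''-x(k+x),x,N-x)$; and $n'+n''=\sum_i i(m_i'+m_i'')=n$.

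Then I would check bijectivity by reversing the construction: from $x\le N$, a partition $Y\in P(n',M-k-x,k+x)$ and a partition $Z\in P(n''-x(k+x),x,N-x)$ viewed as having exactly $x$ parts (padded by zeros), raise every part of $Z$ by $k+x$ and merge with $Y$; the resulting $X$ has at most $(M-k-x)+x=M-k$ parts, each $\le k+N$, and one checks $\sum_{i\ge k+x}m_i=(\text{mult.\ of }k+x\text{ in }Y)+x\ge x$ while $\sum_{i>k+x}m_i=(\text{number of nonzero parts of }Z)\le x$, so $x$ is the largest admissible integer for $X$ and the forward map returns exactly $(x,Y,Z)$. This yields a bijection $P(n,M-k,N+k)\cong\bigsqcup_{x\ge0}P(n',M-k-x,k+x)\times P(n''-x(k+x),x,N-x)$ over all $n'+n''=n$; since $\tfrac{(k+x)(k+x-1)}{2}+\tfrac{x(x-1)}{2}-x(k+x)+x=\tfrac{k(k-1)}{2}$, the weight $q^{k(k-1)/2}q^{n}$ on the left equals $q^{(k+x)(k+x-1)/2}q^{n'}\cdot q^{x(x-1)/2}q^{n''-x(k+x)}\cdot q^{x}$ on the right, and summing over all partitions reproduces \eqref{formula_qVand}.

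The only subtle point, and thus the main obstacle, is the arithmetic around $x$: one must anchor the two non-negativity conditions on $m_{k+x}'$ and $m_{k+x}''$ precisely onto \eqref{formula_xdefinable} at $x$ and onto its failure at $x+1$, and one must keep track that the upshifted second factor is counted with \emph{at most} $x$ parts, so as to match ${N\choose x}_q$, even though before shifting it has exactly $x$. Beyond that the argument is entirely routine combinatorics.
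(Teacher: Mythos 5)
Your proof is correct and follows essentially the same route as the paper: you define $x$ by maximality, observe that maximality forces $\sum_{i>k+x}m_i\le x$, and conclude that the split of the multiplicity $m_{k+x}$ is uniquely determined. You additionally spell out the inverse map and the $q$-weight bookkeeping, which the paper's proof leaves implicit; these are welcome completions but not a different argument.
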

\begin{proof}
The definition of $x$ as the largest integer fulfilling formula \eqref{formula_xdefinable} is unique and it is well defined because for $x=0$ the formula certainly holds.

For the existence and uniqueness of the decomposition we note that the definition of $x$ implies that $\sum_{i\gneq k+x} m_i \leq x$, thus there is exists a unique subpartition with precisely $x$ parts $\geq k+x$ that contains all parts $\gneq k+x$, namely  \smash{$X_{k+x}^{m_{k+x}''}\cdots X_{k+x}^{m_{k+x}''}$} for $m_{i}''=m_{i}$ for $i\gneq k+x$ and $m_{k+x}''=x-\sum_{i\gneq k+x} m_i$.
\end{proof}

\subsection{Main theorem}

As discussed in the introduction,  we take now on the induced module $\SF_{\d+A}^{\leq 0}$ the grading  
$$\deg_{total}=\deg_{vert}+\deg_{fine}$$
and on the  $\Vir^{\leq 0}$ \marginpar{on Virasoro algebra or Whittaker modules?} we take the grading $\deg_{Vir}(L_n^{\d+A})=-n+1$, then the explicit formulas in Section \ref{sec_Formulas} shows that the action is compatible with these filtration, and on the associated graded module we have $L_n=L_n'+\xi\,\Shift_{n-1}$, where the shift operator  raises  $\deg_{vert}$ by $-n+1$  and preserves $\deg_{fine}$, and 
$$L_n'=\left(L_n^{\d+A}\right)_{\psi,\psi}
=\sum_{\substack{i+j=n \\ i,j\leq 0}} {\psi_i^-\psi^+_j}$$
(with normal order irrelevant) is independent of the twisting and it raises  $\deg_{vert}$ by $-n$ any $\deg_{fine}$ by $1$, as it creates an additional pair $\psi^+\psi^-$. For any Whittaker vector $w_k$ we get a $\Vir$-homomorphism
from the universal Whittaker module
\begin{align}\label{formula_fk}
\begin{split}
f_k:\, M_{(a_1,a_2)}=\C[L_0,L_{-1},\ldots]
&\longrightarrow \SF_{\d+A}^{\leq 0}\\
\quad\; L_{n_1}\cdots L_{n_t}v
&\longmapsto L_{n_1}^{\d+A}\cdots L_{n_t}^{\d+A}w_k
\end{split}
\end{align}
The reader may again imagine that $\d+A$ is in Birkhoff normal form, then the horizontal grading is preserved and the Whittaker vectors $w_k$ are the explicit top vectors $v_k$ in Corollary \ref{cor_WhittakerVectors}.

In Theorem \ref{thm_WhittakerIrrep} we recalled that the universal Whittaker modules $M_{(a_1,a_2)}$ for parameters $(a_1,a_2)\neq (0,0)$ are irreducible. In particular, the homomorphisms $f_k$ in formula \eqref{formula_fk} are injective, and because their top terms $w_k$ are linearly independent, the sum $\sum_k f_k$ is injective as well. The map preserves the filtration by $\deg_{total}$, and our character identity in Lemma \ref{lm_characterIdentity}  shows that $\sum_k f_k$ is bijective.

\begin{theorem}\label{thm_VirasoroStructure}
For $\d+A$ an $\sl_2$-connection, irregular of Poisson order $1$ with diagonalizable irregular term as in Example~\ref{ex_PoissonOrderOne}, the simple induced module $\SF_{\d+A}^{\leq 0}$ is, as a Virasoro representation, the direct sum of Whittaker modules with generator $w_k,k\in\Z$
  \begin{align*}
        L_{n\geq 3}^{\d+A} w_k &=0   \\ 
        L_2^{\d+A} w_k
        &=\frac12 \xi^2 \cdot w_k \\
        L_1^{\d+A} w_k 
        &=\xi(k+\frac12 \varepsilon)\cdot w_k
    \end{align*}
\end{theorem}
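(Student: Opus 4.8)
The plan is to produce an explicit, filtration-compatible $\Vir$-module map $\Phi\colon\bigoplus_{k\in\Z}M_{(a_1^{(k)},a_2)}\to\SF_{\d+A}^{\leq 0}$ and to pin it down as an isomorphism by combining irreducibility of the universal Whittaker modules with a graded-dimension count. Corollary~\ref{cor_WhittakerVectorsII} already supplies, for every $k\in\Z$, a Whittaker vector $w_k\in\SF_{\d+A}^{\leq 0}$ (equal to $v_k$ plus terms of strictly lower vertical degree, and equal to the explicit top vector $v_k$ of Corollary~\ref{cor_WhittakerVectors} when $\d+A$ is in Birkhoff normal form), with Whittaker parameters $(a_1^{(k)},a_2)=\bigl(\xi(k+\tfrac12\varepsilon),\ \tfrac12\xi^2\bigr)$. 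By the universal property of $M_{(a_1^{(k)},a_2)}=\C[L_0,L_{-1},\dots]\,v_{(a_1^{(k)},a_2)}$ each $w_k$ determines a $\Vir$-homomorphism $f_k$ as in \eqref{formula_fk}, and I set $\Phi=\sum_k f_k$.

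\emph{Injectivity.} Since $\xi\neq0$ the second parameter $a_2=\tfrac12\xi^2$ is nonzero, so $(a_1^{(k)},a_2)\neq(0,0)$ for every $k$; in particular the degenerate value $(0,\tfrac12\xi^2)$ occurring at $k=0$ when $\varepsilon=0$ is still covered by Theorem~\ref{thm_WhittakerIrrep}. Hence each $M_{(a_1^{(k)},a_2)}$ is irreducible, so $f_k$ is zero or injective, and $f_k(v_{(a_1^{(k)},a_2)})=w_k\neq0$ forces it to be injective. The images $f_k\bigl(M_{(a_1^{(k)},a_2)}\bigr)$ are then pairwise non-isomorphic simple $\Vir$-submodules of $\SF_{\d+A}^{\leq 0}$ (their Whittaker parameters are invariants and the $a_1^{(k)}$ are pairwise distinct because $\xi\neq0$), so their sum is direct and $\Phi$ is injective. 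Equivalently, one may argue by leading symbols: $f_k$ respects the filtration introduced below and has $v_k$, which lies in pairwise distinct horizontal degrees, as its leading symbol, so the leading symbols of the $\mathrm{Im}\,f_k$ are linearly independent.

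\emph{Surjectivity.} Equip $\SF_{\d+A}^{\leq 0}$ with $\deg_{total}=\deg_{vert}+\deg_{fine}$ and $\Vir^{\leq 0}$ with $\deg_{Vir}(L_n^{\d+A})=-n+1$. The explicit formulas of Section~\ref{sec_Formulas} show that $L_n^{\d+A}$ raises $\deg_{total}$ by at most $-n+1$, the top part being $L_n'+\xi\,\Shift_{n-1}$, so $\Phi$ is filtration-preserving once the $k$-th summand is shifted by $\deg_{total}(v_k)=\tfrac12 k^2$. It remains to see the associated graded spaces agree in each (finite-dimensional) degree. The $\deg_{Vir}$-character of $\C[L_0,L_{-1},\dots]$ is $\prod_{m\geq1}(1-q^m)^{-1}$, while $\SF_{\d+A}^{\leq 0}$ is the exterior algebra on the $\psi^\pm_{-m}$, $m\geq0$, with $\deg_{total}(\psi^\pm_{-m})=m+\tfrac12$ and horizontal degree $\mp1$; tracking the horizontal degree by a variable $z$, the required identity is $\sum_{k}z^{k}q^{k^2/2}\prod_{m\geq1}(1-q^m)^{-1}=\prod_{m\geq1}(1+z\,q^{m-1/2})(1+z^{-1}q^{m-1/2})$, i.e.\ the Jacobi triple product, whose finitized, $q$-Vandermonde form is exactly Lemma~\ref{lm_characterIdentity} in the limit $M,N\to\infty$ (the summation index $x$ in \eqref{formula_qVand} counting the $\psi^+$-content of a monomial, as made explicit by the bijection of Lemma~\ref{lm_constructive}). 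An injective, filtration-preserving map of filtered vector spaces with finite, coinciding graded pieces is bijective, so $\Phi$ is an isomorphism of $\Vir$-modules, which is the assertion.

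I expect the main obstacle to be this last bookkeeping rather than any conceptual point: one must verify, term by term in the formulas of Section~\ref{sec_Formulas}, that no contribution to $L_n^{\d+A}$ exceeds filtration degree $-n+1$ (so that on the associated graded one genuinely has $L_n\mapsto L_n'+\xi\,\Shift_{n-1}$), and one must reconcile the grading conventions — the shift $\tfrac12 k^2=\deg_{vert}(v_k)+\deg_{fine}(v_k)$ on the $k$-th summand, the split of $\deg_{total}$ into $\deg_{vert}$ and $\deg_{fine}$, and the role of the auxiliary index $x$ — so that the graded-dimension identity is literally \eqref{formula_qVand} at $M,N\to\infty$. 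Once that is in place the proof closes with no further input; as the paper notes, Lemma~\ref{lm_constructive} moreover yields a constructive variant of surjectivity that does not invoke Theorem~\ref{thm_WhittakerIrrep} at all.
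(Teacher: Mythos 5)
Your proposal is correct and follows essentially the same line of argument as the paper: construct the maps $f_k$ from the universal Whittaker modules via the $w_k$ of Corollary~\ref{cor_WhittakerVectorsII}, use Theorem~\ref{thm_WhittakerIrrep} (which, as you note, covers the degenerate $k=0$, $\varepsilon=0$ case) to get injectivity of each $f_k$, obtain injectivity of the sum from the linear independence of the leading symbols $v_k$ (your "pairwise non-isomorphic simples" reformulation is equivalent), and close with the filtration on $\deg_{total}$ and the $q$-character identity of Lemma~\ref{lm_characterIdentity}. Your recasting of the character identity as the Jacobi triple product in the $M,N\to\infty$ limit is a correct and slightly more explicit packaging of the same counting step, modulo the sign convention on $\deg_{horiz}$ which you flipped relative to the paper's.
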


\subsection{A more constructive proof}\label{sec_constructive}

We would also like to sketch a more constructive and explicit version of  Theorem \ref{thm_VirasoroStructure}, in the sense that it provides a filtration and an explicit bijection of leading terms, and thus an inductive definition of the inverse of the bijection.

In Lemma \ref{lm_constructive} we have given an alternative explicit proof for the character identity by providing a bijection between $\C[X_1,\ldots,X_{k+N}]$ to the tensor product of $\C[X_1,\ldots,X_{k+x}]$ and a span of certain restricted partition $P(n,N-x,x)$. The goal is turn this to a description of leading terms in $\gr(f)$ in formula \eqref{formula_fk}, where 
$$X_{n+1}=L_n^{\d+A}=\Shift_{-(n+1)}+L_n'$$
$$L_n'=(L^{\d+A}_n)_{\psi\psi}=\sum_{i+j=n} \psi^-_i\psi^+_j$$
We consider in the image a monomial
$$\psi_{n_t}^-\cdots \psi_{n_1}^- \, \psi_{m_s}^+\cdots \psi_{m_1}^+$$
and assume without restriction of generality $s\leq t$ and define $k=t-s$. 

As a first step, we settle the case $N=0$, that is, we prove that all monomials with $s=0$ can be reached from $v_k$ by the shift operators: Consider the ring of polynomials in $x_1,\ldots, x_t$. 

\begin{lemma}
    Identify $\Shift_n$ with the symmetric polynomial given by the power sum $p_n=x_t^n+\cdots + x_1^n$ and identify $\psi_{m_t+t-1}^++\psi_{m_{t-1}+t-2}\cdots \psi_{m_1}^+$ for $m_1\geq m_2\geq \cdots \geq m_t$ with the alternating  polynomial
$$\det\begin{pmatrix} x_1^{m_t+t-1} & x_2^{m_t+t-1} & \cdots \\
x_1^{m_{t-1}+t-2} & x_2^{m_{t-1}+t-2} &  \\
\vdots & & \ddots
\end{pmatrix}$$
Then the action of the shift operator corresponds to the action of a symmetric polynomial on an alternating polynomial by multiplication.
\end{lemma}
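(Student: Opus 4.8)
The plan is to verify directly that the two maps agree on generators and that multiplication on one side corresponds to the shift operators on the other. Concretely, recall that $\Shift_n$ acts on $\SF^{\geq 0}$ by the derivation $\psi_j^\pm \mapsto \mp\psi_{j+n}^\pm$; restricting attention to monomials in the $\psi^+$'s only (the case $N=0$), a product $\psi_{-(t-1)}^+\psi_{-(t-2)}^+\cdots\psi_0^+ =: v_{-t}$ (up to sign and ordering) is the analogue of the Vandermonde determinant $\prod_{i<j}(x_i-x_j) = \det(x_i^{t-j})$, and a general monomial $\psi_{m_t+t-1}^+\cdots\psi_{m_1}^+$ with $m_1\geq\cdots\geq m_t$ corresponds to the generalized Vandermonde / Schur-type alternant displayed in the statement. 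So the first step is to fix the dictionary: send $x_i$-monomials that are alternating under $S_t$ to the corresponding antisymmetrized products $\psi^+_{\bullet}$, and send a power sum $p_n = \sum_i x_i^n$ to the operator $\Shift_{-n-1}$ (or whatever degree shift matches the indexing; I would pin the sign and the index offset down once, using that $L_n^{\d+A}$ contributes $\xi\,\Shift_{n-1}$ in the associated graded, cf. the formulas in Section~\ref{sec_Formulas}).

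Second, I would check the key compatibility: that multiplying an alternating polynomial by a symmetric polynomial, and then re-expressing in the monomial-alternant basis, reproduces exactly the action of the corresponding $\Shift$ on the corresponding $\psi^+$-monomial. Because both operations are $\C$-linear and the shift is a derivation while multiplication by $p_n$ on an alternant $a_\mu$ expands (via the classical fact that $p_n \cdot a_\delta = \sum a_{\delta + n e_i}$, summed over $i$ with appropriate sign/collapsing of repeated exponents) into a signed sum of alternants with one exponent raised by $n$, this is a finite bookkeeping check: $\Shift_{-n-1}$ applied to $\psi^+_{a_t}\cdots\psi^+_{a_1}$ is precisely $\sum_i (\text{sign})\, \psi^+_{a_t}\cdots\psi^+_{a_i+n}\cdots\psi^+_{a_1}$, and terms with a repeated index vanish by the Clifford relation $\{\psi^+_k,\psi^+_k\}=2\tau^+$, which on the associated graded is $0$ — matching the vanishing of $a_\mu$ when $\mu$ has a repeated entry. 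The signs on both sides come from the same antisymmetrization, so they match.

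Third, I would invoke the classical fact that the alternating polynomials in $x_1,\dots,x_t$ form a free rank-one module over the ring of symmetric polynomials $\Lambda_t$, with generator the Vandermonde $a_\delta$, and that $\Lambda_t$ is freely generated by the power sums $p_1,\dots,p_t$; dividing out by $a_\delta$ identifies the alternants with $\Lambda_t = \C[p_1,\dots,p_t]$, which under the dictionary is exactly $\C[\Shift_{-2},\dots,\Shift_{-t-1}]\cdot v_{-t}$ acting inside $\SF^{\geq 0}$. Together with the degree bookkeeping (the alternant indexed by $\mu$ has the right total degree to land in the claimed graded piece), this gives the asserted correspondence: the $\Shift$-action on the $\psi^+$-span is intertwined with multiplication of symmetric polynomials on alternating polynomials, hence is faithful and its image is spanned exactly by the monomial alternants, i.e. all $\psi^+$-monomials.

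The main obstacle I anticipate is purely combinatorial-bookkeeping rather than conceptual: getting the index offsets and the signs exactly right between the ``staircase'' shift $m_i + (t-i)$ used to turn a partition into a strictly decreasing exponent sequence and the indexing of the $\psi^+_j$, and making sure the collapse-to-zero phenomenon (repeated exponents $\leftrightarrow$ $\psi^+_k{}^2$) is handled uniformly on both sides. Once the dictionary is fixed so that $p_n \leftrightarrow \Shift_{-n-1}$ and $a_\mu \leftrightarrow$ the displayed antisymmetrized $\psi^+$-product, the identity ``symmetric $\times$ alternating $=$ shift acting'' is the classical Pieri-type expansion for power sums times an alternant, and there is nothing deep left to prove; the remaining work in the section is then to feed this, together with the $N>0$ case and the explicit bijection of Lemma~\ref{lm_constructive}, into the inductive construction of the inverse of $\gr(f)$.
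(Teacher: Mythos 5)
Your proposal is correct and follows essentially the same route as the paper: both reduce the claim to the classical expansion of a power sum times an alternant (a special case of the Murnaghan--Nakayama rule, which the paper proves by expanding the determinant over permutations and relabeling, while you invoke it as a known Pieri-type fact), and both observe that repeated exponents vanish on the polynomial side for the same antisymmetry reason that $(\psi^+_k)^2$ vanishes in the associated graded Clifford algebra. Your hedged guess of an index offset $p_n\leftrightarrow\Shift_{-n-1}$ is not the one the lemma uses — the dictionary is simply $\Shift_n\leftrightarrow p_n$, since $\Shift_n$ moves each $\psi^+_j$-index by $n$ exactly as multiplication by $p_n$ raises one exponent by $n$ — but you flagged this as a detail to be pinned down, and the rest of the argument matches the paper's.
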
 
\begin{proof}
This would follow from the more general Murnaghan–Nakayama rule, but let us give an explicit argument: Write the determinant as a sum over permutations of $\prod_i x_\sigma(i)^{m_i+i-1}$ and multiply by $\sum_j x_j^n$. We may relabel depending on $\sigma$ our index $j=\sigma(j')$, then all summands with a fixed $j'$ collect to the determinant with $m_{j'}$ replaced by $m_{j'}+n$. Possibly, we rearrange the order of the index set, which leads to the expected additional sign. 
\end{proof}

It is known that the module of alternating polynomials is free of rank $1$, generated by the Vandermonde determinant, which corresponds to $v_k$ under the identification above. Since the power sums are a set of generators of the space of symmetric functions, their linear combinations can reach any monomial $\psi_{n_t}^-\cdots \psi_{n_1}^-v$ form $v$. 
\begin{remark}
We can be more specific: The previous determinant is equal to the \emph{Schur polynomial} $s_\lambda$ for the partition $\lambda=(m_t,\ldots,m_1)$ times the Vandermonde determinant, and the Schur polynomials are a linear basis of the space of symmetric functions. In particular, they can be expressed as polynomials in the power sums, and in fact the coefficient is the character of Specht module $S_\lambda$ of $\mathbb{S}_n$ evaluated at a conjugacy class given by a cycle type. Hence, if we regard the Schur polynomial $s_\lambda$ as a polynomial in the power sums, and in turn as a polynomials in the shift operators, then 
    $$s_\lambda.v=\psi_{\lambda_t+t-1}^-\cdots \psi_{\lambda_1}^-v$$
\end{remark}

On the other hand, the algebraic dependencies of shift operators (that is, here, kernel of the map $f$ restricted to the $s=0$ piece) are given by dependencies of the power sums $p_n$, which in turn follow from the alternative set of generators by elementary symmetric polynomials $e_i$, with transformation given by the  Newton identities.  The only algebraic dependencies of the $e_n$ are $e_n=0$ for $n>k$. Of course again here, using Schur polynomials would be more sophisticated and achieve the same purpose.

\begin{example}
If $k=1$ then $x_1^mx_1^{m'}=x_1^{m+m'}$ and accordingly $\Shift_m\Shift_{m'}=\Shift_{m+m'}$, and if $k=2$ then 
$$0=3e_3
=\frac12(x_1+x_2)^3
-\frac32(x_1^2+x_2^2)(x_1+x_2)
+(x_1^3+x_2^3)$$
and accordingly the shifts have such dependencies.
\end{example}

As a next step, we go from $N=0$ to $N=1$ and hence study contributions of the form  
$$\psi_{n_t}^-\cdots \psi_{n_1}^- \, \psi_{0}^+$$
and ignore all further terms. All our shift operators used in the previous step potentially produce now also terms of this form, where one of the shift operators $\Shift_{-n-1}$ is replaced by $L_n'$, note that shift operators and $L_n'$ do not commute. On the other hand, we now have $L_n^{\d+A}=\Shift_{-n-1}+L'_n$ additionally for $n=k$. In principle $\Shift_{-k-1}$ is algebraically dependent on $\Shift_{-1},\ldots,\Shift_{-k}$ and $L_k'v$ can be reached by shifts of $L_n'v$ for $k<n$. The point is that nevertheless the sum is (supposedly) a new generator, algebraically independent of the generators for $k<n$, due to subtle factors and signs appearing.

\begin{example}
We have 
\begin{align*}
    L_k^{\d+A}v&=\Shift_{-k-1}v+ \sum_{i+j=-k}\psi_{-i}^-\psi_{-j}^+\\
    &=(-1)^k \psi_{-k}^-\cdots \psi_{0}^- \, \psi_{0}^+
\end{align*}
We compare this to a suitable polynomial in  $L_n^{\d+A},n<k$ acting on $v$, which we construct to have the same first term: This is given by the Newton identity on $k$ variables 
$$p_{k+1}v
=\sum_{i=1}^k (-1)^{k+i}
p_ie_{k+1-i}
$$
Since $\psi_{-k}^-\cdots \psi_{0}^- \, \psi_{0}^+$ is the lowest term of its kind, it cannot be reached by shifts. Hence, if we interpret the right-hand side as a polynomial in $L_n^{\d+A},n<k$, the only terms that shift $\deg_{fine}$ by one are those where the leading $p_i$ is replaced by $L_{i-1}'$ and all $e_i$ by corresponding shift operators. We directly compute all contributions
\begin{align*}
   \left( \sum_{i=1}^k (-1)^{k+i} L_{i-1}'e_{k-(i-1)} \right)
    \psi_{-(k-1)}^-\cdots \psi_{0}^- 
    &=\sum_{i=1}^k (-1)^{k+i} L_{i-1}'
    \psi_{-k}^-\cdots \bcancel{\psi_{-(i-1)}^-}\cdots \psi_{0}^- \\
    &=\sum_{i=1}^k (-1)^{k+i} \psi_{-(i-1)}^-\psi_{0}^+\;
    \psi_{-k}^-\cdots \bcancel{\psi_{-(i-1)}^-}\cdots \psi_{0}^- \\
    &=\sum_{i=1}^k (-1)^{k+i} (-1)^{i-1}
    \psi_{-k}^-\cdots \psi_{0}^-\;\psi_0^+ \\
    &=(-1)^{k+1}k
    \psi_{-k}^-\cdots \psi_{0}^-\;\psi_0^+ 
\end{align*}
Now the prefactors appearing in $L_k^{\d+A}$ and the polynomial in  $L_n^{\d+A},n<k$ are different $(-1)^k\neq (-1)^{k+1}k$. Hence the two are linearly independent, and a suitable combination has only contributions in $\deg_{fine}\geq 1$. Note that a similar behavior can be forced more directly by using elementary symmetric polynomials $e_n,n\geq k$ or Schur polynomials $s_\lambda$, which is probably the more systematic approach, but we found the $L'_i$-contributions more difficult to determine. 

Similar calculations can be done for $s=1$ involving $\psi_{-i}^+$ for $i>0$, the typical effect is that shifts act with a sign on the $\psi^+$, while the direct terms $L_{k+i}$ are more symmetric in~$\psi^\pm$. 
\end{example}

\begin{problem}
It would be nice to fabricate a full alternative proof of Theorem \ref{thm_VirasoroStructure} from these ideas. Probably this needs more insight in Schur polynomials and how they change by replacing $p_n$ with $L_{n}'$ and/or a better use of the filtration given by $M$ in Lemmata \ref{lm_characterIdentity} and \ref{lm_constructive}.
\end{problem}

\section{Appendix: A differential equation}

We discuss a simplest example of a differential equation with an irregular singularity of Poisson order $1$ that is not in Birkhoff normal form and exhibits nontrivial Stokes matrices. In the context of our current article, it corresponds to the example discussed in Remark \ref{rem_triangular}.

Explicitly, this differential equation is solved by the incomplete Gamma function, and at integer parameters the exponential integral. At negative parameters, we encounter simple exponential functions, which matches that the Stokes matrix, which can be explicitly computed in terms of Gamma-functions, is zero in these cases.  

 The reader should compare the general treatment of irregular $\sl_2$-connections in \cite{JLP76a}, where this example appears as an exceptional case (note again the substitution $1/z$). The matching facts about the incomplete Gamma-functions we have found in \cite{Gau98}. Note that this example can be obtained by confluence from  the  hypergeometric functions, see e.g. \cite{Hor20}, which for us is relevant in view of Example \ref{exm_confluence} and Problem \ref{prob_confluence} and the work \cite{GT12}. 
\bigskip

We consider the following differential equation:
$$\left( \frac{\partial}{\partial z} 
+  \frac{1}{z^2}\begin{pmatrix} \xi & 0 \\ 0 & -\xi \end{pmatrix}
+ \frac{1}{z} \begin{pmatrix} \varepsilon & \tau \\ 0 &  -\varepsilon \end{pmatrix} \right)\Psi(z) = 0
$$ 
For $z$ sufficiently small, $A(z)$ has disctinct eigenvalues, here $\pm(\xi/z^2+\varepsilon/z)$, so it is similar to the respective diagonal matrix. We substitute correspondingly
$$\Psi(z)=\Phi(z)\exp\left(\frac{1}{z}\begin{pmatrix} \xi & 0 \\ 0 & -\xi \end{pmatrix}
- \log(z) \begin{pmatrix} \varepsilon & 0 \\ 0 &  -\varepsilon \end{pmatrix}\right)$$
and get the following differential equation 
$$\frac{\partial}{\partial z} \Phi(z)
-\Phi(z)\left(\frac{1}{z^2}\begin{pmatrix} \xi & 0 \\ 0 & -\xi \end{pmatrix}
+\frac{1}{z} \begin{pmatrix} \varepsilon & 0 \\ 0 &  -\varepsilon \end{pmatrix}\right)
+\left(\frac{1}{z^2}\begin{pmatrix} \xi & 0 \\ 0 & -\xi \end{pmatrix}
+ \frac{1}{z} \begin{pmatrix} \varepsilon & \tau \\ 0 &  -\varepsilon \end{pmatrix}\right)\Phi(z)=0
$$ 
Writing $\Phi(z)=\begin{pmatrix} A(z) & B(z) \\ C(z) & D(z)\end{pmatrix}$ we get 
$$\frac{\partial}{\partial z} \begin{pmatrix} A(z) & B(z) \\ C(z) & D(z)\end{pmatrix}
+\begin{pmatrix} 0 & 2(\xi/z^2+\varepsilon/z)B(z) \\ -2(\xi/z^2+\varepsilon/z)C(z) & 0 \end{pmatrix}
+\begin{pmatrix} 0 & \tau/z \\ 0 &  0 \end{pmatrix}\begin{pmatrix} A(z) & B(z) \\ C(z) & D(z)\end{pmatrix}=0
$$
\bigskip

{\bf Analytically}, this  system of differential equations with can be solved as follows:
%%%%%%%%%%%%%%%%%%%%%%%%%%%for e in degree z^0
%\begin{align*}
%D(z)&=D \\
%C(z)&=C\exp\left(2(-\xi/z+\varepsilon\log(z))\right)\\
%A(z)&=e\int C(z) \d z+A\\
%B(z)&=\left(-De\int \exp\left(2(-\xi/z+\varepsilon\log(z))+B\right)\right)\exp\left(-2(-\xi/z+\varepsilon\log(z))\right)
%\end{align*}
%\begin{align*}
%\begin{pmatrix} A(z) & B(z) \\ C(z) & D(z)\end{pmatrix}
%&=\begin{pmatrix}
%1
%& 
%-e\exp\left(-2(-\xi/z+\varepsilon\log(z))\right)
%\int_0^z \exp\left(2(-\xi/z+\varepsilon\log(z)\right) \d z\\
%0
%& 
%1
%\end{pmatrix}
% where the integral is subtle and represents an incomplete Gamma function and for $\varepsilon=0$ an exponential integral, as we discuss below.
%%%%%%%%%%%%%%%%%%%%%%%%%55
\begin{align*}
\Phi(z)
&=
\begin{pmatrix} A(z) & B(z) \\ C(z) & D(z)\end{pmatrix}
=\begin{pmatrix}
1
& 
B(z) \\
0
& 
1
\end{pmatrix}\\
\Psi(z) &=
\begin{pmatrix}
1
& 
B(z) \\
0
& 
1
\end{pmatrix}
\begin{pmatrix} 
e^{\xi/z}z^{-\varepsilon} 
&
0
\\
0
& 
e^{-\xi/z}z^{\varepsilon} 
\end{pmatrix}
=
\begin{pmatrix} 
e^{\xi/z}z^{-\varepsilon} 
&
B(z)e^{-\xi/z}z^{\varepsilon} 
\\
0
& 
e^{-\xi/z}z^{\varepsilon} 
\end{pmatrix}
\end{align*}
with $B(z)$ given by the differential equation 
$$\frac{\partial}{\partial z} B(z)+2(\xi/z^2+\varepsilon/z)B(z)+(\tau/z)=0.$$
The homogeneous solution is $\exp(2\xi/z)z^{-2\varepsilon}$ and variation of constants and the substitution $t=2\xi/z$ leads quickly to the following slightly subtle integral:
\begin{align*}
B(z)&=-\tau\exp(2\xi/z)z^{-2\varepsilon}
\int_0^z \exp(-2\xi/z)z^{2\varepsilon-1} \d z\\
%t=2\xi/z   dz/dt=-2\xi/t^2
&=\tau\exp(2\xi/z)z^{-2\varepsilon}
\int_{2\xi/z}^\infty \exp(-t) t^{-2\varepsilon-1} (2\xi)^{2\varepsilon-1} \frac{\d t}{-t^2/2\xi}\\
&=-\tau\exp(2\xi/z) (z/2\xi)^{-2\varepsilon} \;
\Gamma(-2\varepsilon,2\xi/z).
\end{align*}
The integral involves the (upper) incomplete Gamma function $\Gamma(s,x)$. In particular, for an integer values of $s=-2\varepsilon$ we have a polynomial resp. an exponential integral 
$$\Gamma(s,x)=\begin{cases} \displaystyle
    (s-1)! e^{-x} \sum_{k=0}^{s-1} \frac{x^k}{k!},\qquad  &s=1,2,\ldots \\
    \displaystyle
    x^s\mathrm{E}_{1-s}(x),\qquad &1-s=1,2,\ldots \\
\end{cases}$$
We have chosen the integration boundary such that $\lim_{z\to 0}B(z)=0$, as an initial condition, but this only holds in a certain sector of the complex plane: It is known, as a consequence of Watson's lemma, that we have an asymptotic for $|\arg(x)|<\frac32 \pi$ and $|x|\to \infty$ as follows:
$$\Gamma(s,x)\sim x^{s-1} e^{-x}\sum_{k\geq 0} (s-1)\cdots(s-k)x^{-k}.$$
while the overlapping regions such as $|\arg(z)-2\pi m|<\frac32 \pi$ this asymptotic is shifted by the monodromy of the integrand
$$\Gamma(s,ze^ {2\pi\i\, m})
=e^ {2\pi\i\, ms}\Gamma(s,z)+(1-e^ {2\pi\i\, ms})\Gamma(s),\quad a\neq 0,-1,-2,\cdots
$$
%https://dlmf.nist.gov/8.2
Clearly, this asymptotic  expression shows that only for $\Re(x)>0$ resp. in the sector 
$|\arg(x)|<\frac12 \pi$ we have the aspired asymptotics 
$\lim_{z\to 0}B(z)=0$,
while in other sectors we have a constant shift, and therefore a different initial value. 
\begin{align*}
\Phi(z)
&=
\begin{pmatrix}
1
& 
B(z)\; -\;\tau(1-e^ {2\pi\i\, ms})\Gamma(s)\exp(2\xi/z) (2\xi/z)^{2\varepsilon} \\
0
& 
1 
\end{pmatrix}.
\end{align*}
This phenomenon of starting with sectorially different asymptotics, is called {Stokes phenomenon} and the  rays $\arg(x)=\pm \frac12 \pi$ are called {Stokes directions}.

\bigskip
{\bf As formal series}, we can alternatively make a power series ansatz 
$$\Phi(z)=\sum_{n\geq 0} \Phi_n z^n
=\sum_{n\geq 0} \begin{pmatrix} A_n & B_n \\ C_n & D_n\end{pmatrix} z^n
$$
giving us a recursion relation
$$n\Phi_n
+\left[ \begin{pmatrix} \xi & 0 \\ 0 & -\xi \end{pmatrix},\Phi_{n+1} \right] 
+\left[ \begin{pmatrix} \varepsilon & 0 \\ 0 & -\varepsilon \end{pmatrix},\Phi_n \right] 
+ \begin{pmatrix} 0 & \tau \\ 0 & 0 \end{pmatrix}\Phi_n =0
$$
$$n\begin{pmatrix} A_n & B_n \\ C_n & D_n\end{pmatrix}
+\begin{pmatrix} 0 & 2\xi B_{n+1} \\ -2\xi C_{n+1} & 0 \end{pmatrix}
+\begin{pmatrix} 0 & 2\varepsilon B_n \\ 2\varepsilon C_n & 0 \end{pmatrix} 
+ \begin{pmatrix}  \tau C_n & \tau D_n \\ 0 & 0 \end{pmatrix}=0
$$
One can verify elementary that this is solved by $\Phi_0=1$ and for $n>0$
%Birkhoff invariants and effective calculations for meromorphic linear differential equations. I
%Lemma 3.1
%lambda_1=\xi, lambda2-\xi, lambda1'=h, lambda2'=-h, c=e c'=0
%\alpha=-2h, beta=0
\begin{align}\label{formula_Phin}
\Phi_n=
\begin{pmatrix}
0 & \tau \,(2\varepsilon+1)\cdots (2\varepsilon+n-1)
\\
0 & 0
\end{pmatrix}
%\begin{pmatrix}
%(-1)^n & 0 \\ 0 & 1  
%\end{pmatrix}
(-2\xi)^{-n}
\end{align}
%Pochhammer (x+1,n-1), (-1)^{n-1}(n-1)! {-2\varepsilon-1 \choose n} 
The problem is that this formal power series 
$$B(z)=\tau\sum_{n>0} (2\varepsilon+1)\cdots (2\varepsilon+n-1)\,(-2\xi)^{-n}$$
has convergence radius $0$, except for $2\varepsilon\in -\mathbb{N}$. In these degenerate cases one has indeed a polynomial substitution to the Birkhoff normal form, and hence the associated Clifford algebras $\SF_{\d+A}$ are isomorphic as in Lemma \ref{lm_gaugeModules}.  %(For $\varepsilon=-1/2$ we have $B(z)=e(-2\xi)^{-1}z$....correspondingly $\psi_0^+ +\psi_{-1}^- /2\xi$)

{Borel resummation} can be used to produce from this divergent power series a convergent power series, but it requires the choice of a sector. 

The Stokes factors, which measures the transformation of the asymptotically well-behaved solutions between the sectors, can be formally defined, and depends on the growth rate of $\Phi_n$, see \cite{JLP76a} Theorem II
\begin{align*}
S_++S_-
&=
\lim_{n\to \infty}
\Phi_nK_n^{-1}\\
&=
\begin{pmatrix}
0 & \tau \,(2\varepsilon+1)_{n-1} (-2\xi)^{-n}
\\
0 & 0
\end{pmatrix}
\begin{pmatrix}
(-1)^n n^{-2\varepsilon} & 0 \\
0 & n^{2\varepsilon}
\end{pmatrix}^{-1}(-2\xi)^{n}/(n-1)!\\
&=
\lim_{n\to \infty} 
\begin{pmatrix}
0 & \tau \,(2\varepsilon+1)_{n-1} n^{-2\varepsilon}/(n-1)!
\\
0 & 0
\end{pmatrix}\\
&=
\begin{pmatrix}
0 & \tau\Gamma(2\varepsilon+1)^{-1} \\
0 & 0
\end{pmatrix}
\end{align*}
using the infinite Euler product for $z=2\varepsilon+1$ and $N=n-2$
$$\Gamma(z)= \lim_{n\to \infty} \frac{N!(N+1)^z}{z(z+1)\cdots(z+N)}$$

%COMPARE to JLP76a
\begin{comment}\label{conventionJLP}
We remark that \cite{JLP76a} and several other sources place the singularity at $z=\infty$, and that the sign in front of $A$ is somewhat convention. To explicitly match the convention for the convenence of the reader: In \cite{JLP76a} the case of Poisson order $r=1$ reads
$$\left(\frac{\partial}{\partial z'}-\sum_{\nu\geq 0} A_\nu'(z')^{-\nu}\right)f'(z)=0$$
By substituting $z'=1/z$ and multiplying with $-1/z^2$ and setting $n=1-\nu$ we have in our convention
$$\left(\frac{\partial}{\partial z}+\sum_{\nu\geq 0} A_{1-\nu}'z^{-1-n}\right)f(z)=0$$
Our differential equation then corresponds to \cite{JLP76a} (2.7) for 
\begin{align*}
    \Lambda
=\begin{pmatrix} \lambda_1 & 0 \\0 & \lambda_2\end{pmatrix}
=\begin{pmatrix} \xi & 0 \\0 & -\xi\end{pmatrix},\qquad
\Lambda
=\begin{pmatrix} \lambda_1 & c \\ c' & \lambda_2\end{pmatrix}
=\begin{pmatrix} \varepsilon & \tau \\0 & -\varepsilon\end{pmatrix}
\end{align*}
or \cite{JLP76a} (2.7) for $k=0$. The parameters before \cite{JLP76a} Lemma 3.1 are $\alpha=-2\varepsilon$ and $\beta=0$ (or reversed). Then \cite{JLP76a} Formula (3.6) for $F_n$ matches  Formula \eqref{formula_Phin} for~$\Phi_n$
\end{comment}

\begin{acknowledgment}
    Many thanks to T. Creutzig for many helpful discussions and private hospitality, as well as to G. Dhillon and T. Dimofte for many helpful discussions. 
    
    BF thanks the Humboldt foundation for the Humboldt research prize, much work on this article was done during the respective visit in Hamburg, and we thank in this context C.~Schweigert, J. Teschner and C. Reiher. SL was supported by the Feyodor-Lynen fellowship during part of this project, and for the finishing touches SL thanks T. Arakawa at RIMS, and the MATRIX workshop ''Tensor Categories, Quantum Symmetries, and Mathematical Physics'' for hospitality. 
\end{acknowledgment}

\begin{bibdiv}
\begin{biblist}
\bib{FL24}{arXiv}{
  author={Feigin, Boris L.},
  author={Lentner, Simon D.},
  title={Vertex algebras with big center and a Kazhdan-Lusztig Correspondence (accepted for publication in AIM)},
  date={2024},
  eprint={arXiv:2210.13337},
  archiveprefix={arXiv},
}
%%%Physics
\bib{Wit08}{article}{
   author={Witten, Edward},
   title={Gauge theory and wild ramification},
   journal={Anal. Appl. (Singap.)},
   volume={6},
   date={2008},
   number={4},
   pages={429--501},
   issn={0219-5305},
   review={\MR{2459120}},
   doi={10.1142/S0219530508001195},
}
\bib{FFTL10}{article}{
   author={Feigin, B.},
   author={Frenkel, E.},
   author={Toledano Laredo, V.},
   title={Gaudin models with irregular singularities},
   journal={Adv. Math.},
   volume={223},
   date={2010},
   number={3},
   pages={873--948},
   issn={0001-8708},
   review={\MR{2565552}},
   doi={10.1016/j.aim.2009.09.007},
}
\bib{GT12}{article}{
   author={Gaiotto, D.},
   author={Teschner, J.},
   title={Irregular singularities in Liouville theory and Argyres-Douglas
   type gauge theories},
   journal={J. High Energy Phys.},
   date={2012},
   number={12},
   pages={050, front matter + 76},
   issn={1126-6708},
   review={\MR{3045283}},
   doi={10.1007/JHEP12(2012)050},
}
\bib{Gait15}{article}{
   author={Gaitsgory, Dennis},
   title={Sheaves of categories and the notion of 1-affineness},
   conference={
      title={Stacks and categories in geometry, topology, and algebra},
   },
   book={
      series={Contemp. Math.},
      volume={643},
      publisher={Amer. Math. Soc., Providence, RI},
   },
   isbn={978-1-4704-1557-0},
   date={2015},
   pages={127--225},
   review={\MR{3381473}},
   doi={10.1090/conm/643/12899},
}
\bib{Yang22}{book}{
   author={Yang, David},
   title={Categorical Moy-Prasad Theory and Applications to Geometric
   Langlands},
   note={Thesis (Ph.D.)--Harvard University},
   publisher={ProQuest LLC, Ann Arbor, MI},
   date={2022},
   pages={128},
   isbn={979-8819-37195-4},
   review={\MR{4464221}},
}
%4D TFT
\bib{CDGG24}{article}{
   author={Creutzig, Thomas},
   author={Dimofte, Tudor},
   author={Garner, Niklas},
   author={Geer, Nathan},
   title={A QFT for non-semisimple TQFT},
   journal={Adv. Theor. Math. Phys.},
   volume={28},
   date={2024},
   number={1},
   pages={161--405},
   issn={1095-0761},
   review={\MR{4788097}},
}
\bib{Kin24}{arXiv}{
  author={Kinnear, Patrik},
  title={Non-semisimple Crane-Yetter varying over the character stack
},
  date={2024},
  eprint={2404.19667},
  archiveprefix={arXiv},
}
%%%%%%%% ENOM

\bib{ENOM10}{article}{
   author={Etingof, Pavel},
   author={Nikshych, Dmitri},
   author={Ostrik, Victor},
   title={Fusion categories and homotopy theory},
   note={With an appendix by Ehud Meir},
   journal={Quantum Topol.},
   volume={1},
   date={2010},
   number={3},
   pages={209--273},
   issn={1663-487X},
   review={\MR{2677836}},
   doi={10.4171/QT/6},
}
\bib{DN21}{article}{
   author={Davydov, Alexei},
   author={Nikshych, Dmitri},
   title={Braided Picard groups and graded extensions of braided tensor
   categories},
   journal={Selecta Math. (N.S.)},
   volume={27},
   date={2021},
   number={4},
   pages={Paper No. 65, 87},
   issn={1022-1824},
   review={\MR{4281262}},
   doi={10.1007/s00029-021-00670-1},
}
%Gait15

%%%%%%%% Quantum Weyl group action  and Stokes
\bib{DCKP92}{article}{
   author={De Concini, C.},
   author={Kac, V. G.},
   author={Procesi, C.},
   title={Quantum coadjoint action},
   journal={J. Amer. Math. Soc.},
   volume={5},
   date={1992},
   number={1},
   pages={151--189},
   issn={0894-0347},
   review={\MR{1124981}},
   doi={10.2307/2152754},
}
%Boalch02
\bib{TLX23}{article}{
   author={Toledano Laredo, Valerio},
   author={Xu, Xiaomeng},
   title={Stokes phenomena, Poisson-Lie groups and quantum groups},
   journal={Adv. Math.},
   volume={429},
   date={2023},
   pages={Paper No. 109189, 34},
   issn={0001-8708},
   review={\MR{4609467}},
   doi={10.1016/j.aim.2023.109189},
}

%%%%%%%%%VOA
\bib{FBZ04}{book}{
   author={Frenkel, Edward},
   author={Ben-Zvi, David},
   title={Vertex algebras and algebraic curves},
   series={Mathematical Surveys and Monographs},
   volume={88},
   edition={2},
   publisher={American Mathematical Society, Providence, RI},
   date={2004},
   pages={xiv+400},
   isbn={0-8218-3674-9},
   review={\MR{2082709}},
   doi={10.1090/surv/088},
}
\bib{FLM88}{book}{ %Orbifold
   author={Frenkel, Igor},
   author={Lepowsky, James},
   author={Meurman, Arne},
   title={Vertex operator algebras and the Monster},
   series={Pure and Applied Mathematics},
   volume={134},
   publisher={Academic Press, Inc., Boston, MA},
   date={1988},
   pages={liv+508},
   isbn={0-12-267065-5},
   review={\MR{0996026}},
}
\bib{Huang10}{article}{ %Including log cases 
   author={Huang, Yi-Zhi},
   title={Generalized twisted modules associated to general automorphisms of
   a vertex operator algebra},
   journal={Comm. Math. Phys.},
   volume={298},
   date={2010},
   number={1},
   pages={265--292},
   issn={0010-3616},
   review={\MR{2657819}},
   doi={10.1007/s00220-010-0999-6},
}
\bib{Bak16}{article}{ %Logarithmic Jacobi identity
   author={Bakalov, Bojko},
   title={Twisted logarithmic modules of vertex algebras},
   journal={Comm. Math. Phys.},
   volume={345},
   date={2016},
   number={1},
   pages={355--383},
   issn={0010-3616},
   review={\MR{3509017}},
   doi={10.1007/s00220-015-2503-9},
}
%Deformation
\bib{DLM96}{article}{
   author={Dong, Chongying},
   author={Li, Haisheng},
   author={Mason, Geoffrey},
   title={Simple currents and extensions of vertex operator algebras},
   journal={Comm. Math. Phys.},
   volume={180},
   date={1996},
   number={3},
   pages={671--707},
   issn={0010-3616},
   review={\MR{1408523}},
}
\bib{Li97}{article}{ 
   author={Li, Haisheng},
   title={The physics superselection principle in vertex operator algebra
   theory},
   journal={J. Algebra},
   volume={196},
   date={1997},
   number={2},
   pages={436--457},
   issn={0021-8693},
   review={\MR{1475118}},
   doi={10.1006/jabr.1997.7126},
}
\bib{AM09}{article}{
   author={Adamovi\'c, Dra\v zen},
   author={Milas, Antun},
   title={Lattice construction of logarithmic modules for certain vertex
   algebras},
   journal={Selecta Math. (N.S.)},
   volume={15},
   date={2009},
   number={4},
   pages={535--561},
   issn={1022-1824},
   review={\MR{2565050}},
   doi={10.1007/s00029-009-0009-z},
}
%LogKL general \cites{FGST05,FT10, AM14, Len21, Sug21, CLR23}
\bib{FGST05}{article}{
   author={Ga\u inutdinov, A. M.},
   author={Semikhatov, A. M.},
   author={Tipunin, I. Yu.},
   author={Fe\u igin, B. L.},
   title={The Kazhdan-Lusztig correspondence for the representation category
   of the triplet $W$-algebra in logorithmic conformal field theories},
   language={Russian, with Russian summary},
   journal={Teoret. Mat. Fiz.},
   volume={148},
   date={2006},
   number={3},
   pages={398--427},
   issn={0564-6162},
   translation={
      journal={Theoret. and Math. Phys.},
      volume={148},
      date={2006},
      number={3},
      pages={1210--1235},
      issn={0040-5779},
   },
   review={\MR{2283660}},
   doi={10.1007/s11232-006-0113-6},
}
\bib{FT10}{arXiv}{
  author={Feigin, Boris},
  author={Tipunin, Ilja},
  title={Logarithmic CFTs connected with simple Lie algebras},
  date={2010},
  eprint={1002.5047},
  archiveprefix={arXiv},
}
\bib{AM14}{article}{
   author={Adamovi\'c, Dra{\v z}en},
   author={Milas, Antun},
   title={$C_2$-cofinite $\mathcal{W}$-algebras and their logarithmic
   representations},
   conference={
      title={Conformal field theories and tensor categories},
   },
   book={
      series={Math. Lect. Peking Univ.},
      publisher={Springer, Heidelberg},
   },
   isbn={978-3-642-39382-2},
   isbn={978-3-642-39383-9},
   date={2014},
   pages={249--270},
   review={\MR{3585369}},
}
\bib{Len21}{article}{
   author={Lentner, Simon D.},
   title={Quantum groups and Nichols algebras acting on conformal field
   theories},
   journal={Adv. Math.},
   volume={378},
   date={2021},
   pages={Paper No. 107517, 71},
   issn={0001-8708},
   review={\MR{4184294}},
   doi={10.1016/j.aim.2020.107517},
}
\bib{Sug21}{article}{
   author={Sugimoto, Shoma},
   title={On the Feigin-Tipunin conjecture},
   journal={Selecta Math. (N.S.)},
   volume={27},
   date={2021},
   number={5},
   pages={Paper No. 86, 43},
   issn={1022-1824},
   review={\MR{4305499}},
   doi={10.1007/s00029-021-00662-1},
}
\bib{CLR23}{arXiv}{
  author={Creutzig, Thomas},
  author={Lentner, Simon},
  author={Rupert, Matthew},
  title={An algebraic theory for logarithmic Kazhdan-Lusztig correspondences},
  date={2023},
  eprint={2306.11492},
  archiveprefix={arXiv}
}
%LogKL sl2 \cites{FGST05, AM08, TW13, CGR20, CLR21, GN21, CLR23}
\bib{AM08}{article}{
   author={Adamovi\'c, Dra\v zen},
   author={Milas, Antun},
   title={On the triplet vertex algebra $\scr W(p)$},
   journal={Adv. Math.},
   volume={217},
   date={2008},
   number={6},
   pages={2664--2699},
   issn={0001-8708},
   review={\MR{2397463}},
   doi={10.1016/j.aim.2007.11.012},
}
\bib{TW13}{article}{
   author={Tsuchiya, Akihiro},
   author={Wood, Simon},
   title={The tensor structure on the representation category of the $\scr
   W_p$ triplet algebra},
   journal={J. Phys. A},
   volume={46},
   date={2013},
   number={44},
   pages={445203, 40},
   issn={1751-8113},
   review={\MR{3120909}},
   doi={10.1088/1751-8113/46/44/445203},
}
\bib{CGR20}{article}{
   author={Creutzig, Thomas},
   author={Gainutdinov, Azat M.},
   author={Runkel, Ingo},
   title={A quasi-Hopf algebra for the triplet vertex operator algebra},
   journal={Commun. Contemp. Math.},
   volume={22},
   date={2020},
   number={3},
   pages={1950024, 71},
   issn={0219-1997},
   review={\MR{4082225}},
   doi={10.1142/S021919971950024X},
}
\bib{CLR21}{arXiv}{
  author={Creutzig, Thomas},
  author={Lentner, Simon},
  author={Rupert, Matthew},
  title={Characterizing braided tensor categories associated to logarithmic vertex operator algebras},
  date={2021},
  eprint={2104.13262},
  archiveprefix={arXiv}
}
\bib{GN21}{arXiv}{
author={Gannon, Terry},
author={Negron, Cris},
title={Quantum $\mathrm{SL}(2)$ and logarithmic vertex operator algebras at $(p,1)$-central charge}
date={2023},
eprint={2104.12821
  archiveprefix={arXiv}
}
}

%%%% Chiral Cohomology

\bib{BD04}{book}{
   author={Beilinson, Alexander},
   author={Drinfeld, Vladimir},
   title={Chiral algebras},
   series={American Mathematical Society Colloquium Publications},
   volume={51},
   publisher={American Mathematical Society, Providence, RI},
   date={2004},
   pages={vi+375},
   isbn={0-8218-3528-9},
   review={\MR{2058353}},
   doi={10.1090/coll/051},
}
\bib{vEH24}{article}{
   author={van Ekeren, Jethro},
   author={Heluani, Reimundo},
   title={The first chiral homology group},
   journal={Comm. Math. Phys.},
   volume={405},
   date={2024},
   number={8},
   pages={Paper No. 194, 84},
   issn={0010-3616},
   review={\MR{4779563}},
   doi={10.1007/s00220-024-05061-z},
}

%%%%%%%%Whittaker module for Virasoro etc
\bib{OW09}{article}{
   author={Ondrus, Matthew},
   author={Wiesner, Emilie},
   title={Whittaker modules for the Virasoro algebra},
   journal={J. Algebra Appl.},
   volume={8},
   date={2009},
   number={3},
   pages={363--377},
   issn={0219-4988},
   review={\MR{2535995}},
   doi={10.1142/S0219498809003370},
}
\bib{OW13}{article}{
   author={Ondrus, Matthew},
   author={Wiesner, Emilie},
   title={Whittaker categories for the Virasoro algebra},
   journal={Comm. Algebra},
   volume={41},
   date={2013},
   number={10},
   pages={3910--3930},
   issn={0092-7872},
   review={\MR{3169498}},
   doi={10.1080/00927872.2012.693557},
}
\bib{LGZ11}{article}{
   author={L\"u, Rencai},
   author={Guo, Xiangqian},
   author={Zhao, Kaiming},
   title={Irreducible modules over the Virasoro algebra},
   journal={Doc. Math.},
   volume={16},
   date={2011},
   pages={709--721},
   issn={1431-0635},
   review={\MR{2861395}},
}
\bib{ALZ16}{article}{
   author={Adamovi\'c, Dra\v zen},
   author={L\"u, Rencai},
   author={Zhao, Kaiming},
   title={Whittaker modules for the affine Lie algebra $A_1^{(1)}$},
   journal={Adv. Math.},
   volume={289},
   date={2016},
   pages={438--479},
   issn={0001-8708},
   review={\MR{3439693}},
   doi={10.1016/j.aim.2015.11.020},
}

%%%%%%% STOKES
\bib{Birk1913a}{article}{
   author={Birkhoff, George D.},
   title={A theorem on matrices of analytic functions},
   journal={Math. Ann.},
   volume={74},
   date={1913},
   number={1},
   pages={122--133},
   issn={0025-5831},
   review={\MR{1511753}},
   doi={10.1007/BF01455346},
}
\bib{Birk1913b}{article}{
   author={Birkhoff, George D.},
   title={A theorem on matrices of analytic functions},
   journal={Math. Ann.},
   volume={74},
   date={1913},
   number={1},
   pages={122--133},
   issn={0025-5831},
   review={\MR{1511753}},
   doi={10.1007/BF01455346},
}
\bib{JLP76a}{article}{
   author={Jurkat, W.},
   author={Lutz, D.},
   author={Peyerimhoff, A.},
   title={Birkhoff invariants and effective calcualtions for meromorphic
   linear differential equations},
   journal={J. Math. Anal. Appl.},
   volume={53},
   date={1976},
   number={2},
   pages={438--470},
   issn={0022-247X},
   review={\MR{0399544}},
   doi={10.1016/0022-247X(76)90122-0},
}
\bib{JLP76b}{article}{ %same eigenvalues
   author={Jurkat, W. B.},
   author={Lutz, D. A.},
   author={Peyerimhoff, A.},
   title={Birkhoff invariants and effective calculations for meromorphic
   linear differential equations. II},
   journal={Houston J. Math.},
   volume={2},
   date={1976},
   number={2},
   pages={207--238},
   issn={0362-1588},
   review={\MR{0399545}},
}
\bib{Boalch02}{article}{  %Basics, Quantum Weyl action 
   author={Boalch, Philip P.},
   title={$G$-bundles, isomonodromy, and quantum Weyl groups},
   journal={Int. Math. Res. Not.},
   date={2002},
   number={22},
   pages={1129--1166},
   issn={1073-7928},
   review={\MR{1904670}},
   doi={10.1155/S1073792802111081},
}
\bib{Boalch14}{article}{ %General case, Riemann-Hilbert
   author={Boalch, P. P.},
   title={Geometry and braiding of Stokes data; fission and wild character
   varieties},
   journal={Ann. of Math. (2)},
   volume={179},
   date={2014},
   number={1},
   pages={301--365},
   issn={0003-486X},
   review={\MR{3126570}},
   doi={10.4007/annals.2014.179.1.5},
}
\bib{Hor20}{article}{ %Confluence
   author={Horrobin, Calum},
   author={Mazzocco, Marta},
   title={Stokes phenomenon arising in the confluence of the Gauss
   hypergeometric equation},
   conference={
      title={Asymptotic, algebraic and geometric aspects of integrable
      systems},
   },
   book={
      series={Springer Proc. Math. Stat.},
      volume={338},
      publisher={Springer, Cham},
   },
   isbn={978-3-030-57000-2},
   isbn={978-3-030-56999-0},
   date={2020},
   pages={101--158},
   review={\MR{4167529}},
   doi={10.1007/978-3-030-57000-2\_7},
}

%%%%%%%%Combinatorics
\bib{Gauß1808}{book}{
   author={Gauß, Carl F.},
   title={Summatio quarumdam serierum singularium},
   publisher={Dieterich, Göttingen},
   date={1808},
   %pages={xvi+255},
   %isbn={0-521-63766-X},
   %review={\MR{1634067}},
}
\bib{And98}{book}{
   author={Andrews, George E.},
   title={The theory of partitions},
   series={Cambridge Mathematical Library},
   note={Reprint of the 1976 original},
   publisher={Cambridge University Press, Cambridge},
   date={1998},
   pages={xvi+255},
   isbn={0-521-63766-X},
   review={\MR{1634067}},
}
%Ln algebra
\bib{FF93}{article}{
   author={Feigin, Boris},
   author={Frenkel, Edward},
   title={Coinvariants of nilpotent subalgebras of the Virasoro algebra and
   partition identities},
   conference={
      title={I. M. Gel\cprime fand Seminar},
   },
   book={
      series={Adv. Soviet Math.},
      volume={16, Part 1},
      publisher={Amer. Math. Soc., Providence, RI},
   },
   isbn={0-8218-4118-1},
   date={1993},
   pages={139--148},
   review={\MR{1237828}},
}

%%%%%%%%%% Incomplete Gamma function
\bib{Gau98}{article}{
   author={Gautschi, Walter},
   title={The incomplete gamma functions since Tricomi},
   conference={
      title={Tricomi's ideas and contemporary applied mathematics},
      address={Rome/Turin},
      date={1997},
   },
   book={
      series={Atti Convegni Lincei},
      volume={147},
      publisher={Accad. Naz. Lincei, Rome},
   },
   date={1998},
   pages={203--237},
   review={\MR{1737497}},
}

\end{biblist}
\end{bibdiv}

\end{document}